\newtheorem{thmA}{Theorem}
\newtheorem{theorem}{Theorem}[section]
\newtheorem{prop}[theorem]{Proposition}
\newtheorem{proposition}[theorem]{Proposition}
\newtheorem{lemma}[theorem]{Lemma}
\newtheorem{corollary}[theorem] {Corollary}
\newtheorem*{claim*}{Claim}
\theoremstyle{remark}
\newtheorem{remark}[theorem]{Remark}
\newtheorem{warning}[theorem]{Warning}
\theoremstyle{definition}
\newtheorem{definition}[theorem]{Definition}
\newtheorem{notation}[theorem]{Notation}
\def\calg{\mathcal{G}}
\def\F{\mathcal{F}}
\def\Z{\mathbb Z}
\def\N{\mathbb N}
\def\G{\mathcal{G}}
\def\hom{{\rm{Hom}}}
\def\<{\langle}
\def\>{\rangle}
\newcommand{\st}{\mathrm{st}}
\newcommand{\lk}{\mathrm{lk}}
\newcommand{\homcd}{\hom_{\textnormal{c}}}
\newcommand{\rar}{\rightarrow}
\def\co{\colon}
\def\ostar{\mathring\st}
\newcommand{\coker}{\textnormal{coker}}
\newcommand{\id}{\textnormal{id}}
\newcommand{\colim}{\mathop{\textnormal{colim}}}
\newcommand{\under}{\triangleleft}
\newcommand{\dcap}{\mathfrak{c}}
\newcommand{\rmod}{R\tn{-}\mathbf{Mod}}
\newcommand{\modr}{\mathbf{Mod}\tn{-}R}
\newcommand{\rmodr}{R\tn{-}\mathbf{Mod}\tn{-}R}
\newcommand{\hres}[1]{h_*|_{#1}}
\newcommand{\vc}{^\textnormal{vc}}
\newcommand{\lf}{\textnormal{lf}}
\newcommand{\tn}[1]{\textnormal{#1}}
\renewcommand{\geq}{\geqslant}		
\renewcommand{\leq}{\leqslant}
\newcommand{\poset}{X_{\bullet}}	
\newcommand{\fund}[1]{\bar{#1}}		
\newcommand{\chain}{a}				
\newcommand{\localchain}{a}			
\newcommand{\shchain}{\eta}			
\newcommand{\cochain}{\phi}			
\newcommand{\intcochain}{\psi}		
\newcommand{\shcochain}{\phi}		
\newcommand{\bigsimplex}{\alpha}	
\newcommand{\smallsimplex}{\sigma}	
\newcommand{\bigsimplexone}{\beta}
\newcommand{\smallsimplexone}{\tau}
\newcommand{\smallsimplextwo}{\rho}
\newcommand{\ind}[2]{\mathfrak{o}_{#1}(#2)}	
\newcommand{\sg}[1]{\pm_{#1}}
\DeclareMathOperator{\im}{im}
\title[Duality for CM Complexes through Combinatorial Sheaves]{Duality for Cohen--Macaulay Complexes through Combinatorial Sheaves}
\author{Richard D. Wade and Thomas A. Wasserman}   
\begin{document}

\begin{abstract} 
We prove a duality theorem for Cohen--Macaulay simplicial complexes, working over an arbitrary ring. This is a generalisation of Poincar\'e Duality, framed in the language of combinatorial sheaves. Our treatment is self-contained and accessible for readers with a working knowledge of simplicial complexes and (co)homology. The main motivation is a link with Bieri-Eckmann duality for discrete groups, which is explored in a companion paper \cite{WadeWass}.
\end{abstract}

\address{}
\email{}

\maketitle
\setcounter{tocdepth}{2}

\section{Introduction}
A space $X$ is Cohen--Macaulay (CM) of dimension $n$ if the reduced homology $\tilde{H}_*(X)$ and local homology groups $H_{*}(X,X-x)$ for $x\in X$ are trivial unless $*=n$. CM spaces appear throughout combinatorial, algebraic and geometric topology. If $X$ satisfies just the local homology condition, $X$ is called locally CM. Locally CM spaces can be thought of as homology manifolds with mild singularities. Akin to Poincar\'e duality, a locally CM and locally finite simplicial complex $X$ admits an isomorphism induced by a cap product between cohomology and homology with appropriate coefficients. We call this isomorphism \emph{CM duality}. 

\subsection{Results}

\subsubsection{CM Duality}
A \emph{combinatorial sheaf} on a simplicial complex $X$ is a functor out of the poset of simplices in $X$, ordered by inclusion. A \emph{combinatorial cosheaf} is functor out of $X^\text{op}$. Taking local homology at simplices defines a (combinatorial) sheaf $h_*$ on $X$, whereas local cohomology defines a cosheaf $h^*$.
\begin{thmA}[CM Duality]\label{introdualitythm}
	Let $X$ be a locally finite, locally Cohen--Macaulay simplicial complex of dimension $n$. Then there is a fundamental class $[X]\in H^{\lf}_n(X;h^*)$ in locally finite homology, such that the cap product induces isomorphisms
	$$
	[X]\cap-\colon H_c^p(X;h_*)\xrightarrow{\cong} H_{n-p}(X;\Z)
	$$
	and
	$$
	[X]\cap-\colon H_c^p(X;\Z)\xrightarrow{\cong} H_{n-p}(X;h^*),
	$$	
	where $H^*_c$ denotes compactly supported cohomology.
\end{thmA}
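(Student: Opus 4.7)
The plan is to work at the chain level throughout: construct $[X]$ and the cap product explicitly from combinatorial data, and prove the duality isomorphisms by a local-to-global argument that reduces to the defining CM condition. First I would identify each (co)homology group in the statement with the homology of an explicit combinatorial (co)chain complex whose $k$-(co)chains assign to each $k$-simplex an element of the relevant (co)sheaf, with differentials built from the face maps of $X$. Since $X$ is locally CM of dimension $n$, for each top-dimensional simplex $\alpha$ the cosheaf value $h^*(\alpha)$ is free of rank one and concentrated in degree $n$; choosing a generator on each top simplex yields a locally finite $n$-chain $[X] \in C_n^{\lf}(X; h^*)$. The key verification is that, with compatible sign conventions, $[X]$ is a cycle: at a codimension-one simplex $\tau$, the boundary contribution is controlled by the reduced homology of $\lk(\tau)$, which by the CM condition is concentrated in degree zero and assembles so that the total contribution cancels.

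Next I would define the cap product combinatorially. A compactly supported $p$-cochain $\phi$ with values in the sheaf $h_*$ assigns an element $\phi(\sigma) \in h_*(\sigma)$ to each $p$-simplex $\sigma$. I would set $[X] \cap \phi$ to be the $(n-p)$-chain whose value on the face of $\alpha$ opposite $\sigma$ (for each top simplex $\alpha$ with $p$-face $\sigma$) is obtained from the natural pairing of $\phi(\sigma)$ with the restriction of $[X]$ to $\alpha$. The second cap product, applied to a $\Z$-valued compactly supported cochain, is defined analogously using the $\Z$-module structure on $h^*(\alpha)$. Verifying that these are chain maps -- i.e.\ that $[X]\cap -$ intertwines the (co)sheaf differentials with the simplicial boundary -- is essentially a Leibniz identity and follows from $[X]$ being a cycle together with the definitions.

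The main obstacle is showing that the chain map $[X] \cap -$ is a quasi-isomorphism. My plan is to filter $X$ by an exhaustion by compact subcomplexes, use a Mayer--Vietoris argument to reduce the global statement to closed stars of vertices, and then verify the local statement on $\st(v)$: there the cap product specialises to a pairing whose non-degeneracy is equivalent to the CM condition on $\lk(v)$, because CM-ness forces the relevant local (co)homology groups to be concentrated in a single degree and free over $\Z$. The delicate point is controlling how the (co)sheaf differentials interact with the cap product under inclusion of subcomplexes; this is cleanest via a spectral-sequence or acyclic-assembly argument for combinatorial sheaves and cosheaves, which I expect to be the technical engine developed in the earlier sections of the paper. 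The second isomorphism should then follow by the same strategy with the roles of sheaf and cosheaf interchanged, or by a naturality argument deriving it from the first.
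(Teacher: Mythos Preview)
Your outline diverges substantially from the paper's route, and the divergence matters at the crucial step. The paper does \emph{not} reduce to stars via Mayer--Vietoris. Instead it builds a global double complex
\[
D^l_k(L)=\bigoplus_{\sigma\in L_l} C_k(X,X-\ostar\sigma),
\]
whose column spectral sequence has $E_2^{l,k}=H^l_c(L;h_k)$ and collapses under the CM hypothesis, while its rows (augmented by $C_\bullet(X,L\vc)$) are shown to be exact by an explicit contracting homotopy called the \emph{last vertex lift}. An iterated application (the ``Python Proposition'') yields an explicit chain-level inverse $\dcap$ to the augmentation, and this $\dcap$ is then identified term-by-term with $[\bar X]\cap-$. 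The whole argument is global and never appeals to a local model.

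Your local-to-global plan has a genuine gap at the base case. On a closed star $\st(v)$ the statement you would need, $H^p(\st(v);h_n|_{\st(v)})\cong H_{n-p}(\st(v);\Z)$, is false in general: the right side vanishes for $p<n$ by contractibility of $\st(v)$, but $H^0(\st(v);h_n|_{\st(v)})=\Gamma(h_n|_{\st(v)})$ surjects onto $h_n(v)\cong\tilde H_{n-1}(\lk v)$, which is typically nonzero. Worse, $h_n|_{\st(v)}$ is not intrinsic to $\st(v)$: for $w\in\lk(v)$ the stalk $h_n(w)=H_n(X,X-\ostar w)$ depends on simplices of $X$ outside $\st(v)$, so the ``local'' computation does not decouple from the global one. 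Making this work would require a careful relative formulation (pairing $H^p_c(L;h_n|_L)$ with $H_{n-p}(X,L\vc)$ rather than $H_{n-p}(L)$) and a check that the cap product respects the associated long exact sequences---which is precisely what the paper's double complex packages in one stroke.

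A minor correction: the cycle condition for the fundamental class does \emph{not} use the CM hypothesis. For any $n$-dimensional complex, the boundary of $\bar X=\sum_{\alpha\in X_n}\alpha\under\alpha^*$ at $\tau\in X_{n-1}$ is $\delta(\tau\under\tau^*)$, hence zero in $h^n(\tau)$ by definition of the cokernel.
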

A more comprehensive statement is given in Theorem~\ref{dualitytheorem}. There we work over an arbitrary ring $R$ and relative to subcomplexes, and give corresponding versions for non-compactly-supported cohomology and locally finite homology. We additionally explain how to obtain CM duality with coefficients in arbitrary (co)sheaves.

\subsubsection{Cap product}
In Section~\ref{capproductsection} we give an explicit construction of the cap product between combinatorial cosheaf homology and sheaf cohomology as a chain map out of a double complex. At the chain level this involves some choices that depend on an orientation of $X$;  we prove that the induced map on (co)homology is independent of these choices.

\subsubsection{Naturality}
CM duality is natural in the following sense:

\begin{thmA}\label{intronatthm}
	Let $X$ and $Y$ be locally CM simplicial complexes and let $f\colon X \rar Y$ be a local homeomorphism that restricts to a simplicial isomorphism on stars. Then the duality isomorphism from Theorem~\ref{introdualitythm} is natural with respect to the maps induced by $f$ in (co)homology.
\end{thmA}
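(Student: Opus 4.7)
The plan is to establish naturality in two stages: first, to identify the functorial behaviour of $f$ on the combinatorial (co)sheaves $h_*$ and $h^*$ and on the relevant (co)homology groups; and second, to show that the chain-level cap product construction from Section~\ref{capproductsection} intertwines the resulting maps. Because $f$ restricts to a simplicial isomorphism on each star, it canonically identifies the local (co)homology groups at corresponding simplices, so the natural isomorphisms $h_*(\sigma) \cong h_*(f(\sigma))$ and $h^*(\sigma) \cong h^*(f(\sigma))$ promote the map of simplex posets induced by $f$ to a morphism of combinatorial (co)sheaves. Using the local homeomorphism property (which allows pushforward along $f$ of compactly supported cochains and sections) together with the standard functoriality of homology under simplicial maps, this yields induced maps on each of the four (co)homology theories appearing in Theorem~\ref{introdualitythm}, with the direction of each map dictated by the support condition.

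Next I would verify that $f$ preserves the fundamental class in the appropriate sense: the locally finite cycle representing $[X]$ in $C^{\lf}_n(X; h^*)$ should map to a representative of $[Y]$ under the induced chain map. This reduces to a local verification at each top-dimensional simplex $\sigma \in X$, where the preferred generator of $h^*(\sigma)$ is, by construction, sent to the preferred generator of $h^*(f(\sigma))$. The locally finite cycle condition is preserved simplex by simplex because $f$ is a simplicial isomorphism on stars, and so the coherence data that defines the fundamental class on $X$ matches exactly with that on $Y$ along $f$.

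With these preparations in hand, naturality follows by showing that the chain-level cap product square commutes. The explicit double complex construction from Section~\ref{capproductsection} assembles the coboundary operators, (co)sheaf structure maps and cap product formula entirely from local data on stars, which $f$ identifies by hypothesis. After choosing compatible local orientations on $X$ and $Y$, the chain-level cap product squares commute on the nose, and passing to (co)homology together with the independence of orientation choices established in Section~\ref{capproductsection} completes the argument. The main obstacle I expect is the middle step: the fundamental class lives in locally finite homology with non-constant coefficients, so one must simultaneously track local orientations and cosheaf structure maps, and since $f$ is not assumed to be proper or surjective one must also confirm that the induced chain-level map produces a \emph{valid} locally finite cycle in $Y$ representing $[Y]$. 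This is essentially a local verification, but the bookkeeping of signs and structure maps across a non-surjective local homeomorphism is where the care is required.
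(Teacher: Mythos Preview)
Your overall architecture matches the paper's: define induced maps on the relevant (co)homology groups, verify compatibility of fundamental classes, and check that the cap product intertwines everything. However, there is a genuine gap in the middle step, and you have in fact already flagged it yourself without recognising that it is fatal to the approach as written.

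You propose to show that the fundamental class $[X]$ \emph{pushes forward} to $[Y]$ along the induced map on locally finite homology. This is the wrong direction, and in general no such pushforward exists. Locally finite homology with cosheaf coefficients is \emph{contravariant} for star-local homeomorphisms: the paper defines a transfer-type map
\[
f^! \colon C^\lf_*(Y;h^*_Y) \longrightarrow C^\lf_*(X;h^*_X), \qquad \sigma\triangleleft\alpha^* \longmapsto \sum_{\tau\in f^{-1}\sigma} \pm\,\tau\triangleleft (f|_{\st\tau}^{-1}\alpha)^*,
\]
and the key fact (Lemma~\ref{fundclassnatlemma}) is that $f^!(\overline Y)=\overline X$. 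Your worry that for non-surjective $f$ the pushforward of $\overline X$ might fail to be a valid locally finite cycle on $Y$, or fail to represent $[Y]$, is exactly right: for something as simple as an open inclusion $f\colon X\hookrightarrow Y$, the image of $\overline X$ is supported only on the simplices of $X$ and cannot represent $[Y]$. The resolution is not more careful bookkeeping but a reversal of direction.

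Once one has $f^![\overline Y]=[\overline X]$, the naturality squares are not ``on the nose'' commutations of a single covariant functor but rather mixed-variance identities of the form
\[
f_*\bigl(f^!\eta \cap \phi\bigr) = \eta \cap f_!\phi
\qquad\text{and}\qquad
f^!\eta \cap f^*\psi = f^!(\eta \cap \psi),
\]
proved in Lemma~\ref{naturalitylemma}. Substituting $\eta=\overline Y$ into these then yields the two commuting squares of Theorem~\ref{naturalityofduality}. Your sketch of ``chain-level cap product squares commute after choosing compatible orientations'' is morally this, but the specific pairing of covariant maps $f_!$, $f_*$ with contravariant maps $f^!$, $f^*$ is what makes the argument go through, and is not visible in your outline.
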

This is a part of Theorem~\ref{naturalityofduality}. When a group $G$ acts on a locally CM complex $X$ the (co)homology groups that appear in CM duality are $G$-modules. Theorem~\ref{intronatthm} implies that the isomorphisms from Theorem~\ref{introdualitythm} are isomorphisms of $G$-modules.

\subsubsection{Cosheaf homology in degree 0} If $\mathcal{F}$ is a sheaf then $H^0(X;\mathcal{F})$ has a geometric description as the group of global sections $\Gamma(\mathcal{F})$ of $\mathcal{F}$. In this paper we examine cosheaf homology in degree 0, for the particular case of the local cohomology cosheaf $h^*$.  With some restrictions on the local homology sheaf, we show that $H_0(X;h^*)$ is a subgroup of the dual to the global section group $\Gamma(h_*)$ of the local homology sheaf.

Recall that an inverse system of abelian groups is called semi-stable if the images of the maps in the system stabilise (sometimes this is called the \emph{Mittag-Leffler condition}). A homomorphism $f: \Gamma(\mathcal{F}) \to \mathbb{Z}$ in the dual of $\Gamma(\mathcal{F})$ is \emph{compactly determined } if there exists a compact set $K$ such that $f(s)=0$ for every section $s$ that is trivial on $K$.
\begin{thmA}\label{introdualsheaf}
	Let $X$ be a locally CM simplicial complex, and let $\{K_i\}_{i\in \N}$ be a filtration of $X$ by finite subcomplexes. Then the inverse system $\{\Gamma(\hres{K_i})\}_{i\in \N}$ is semi-stable if and only if $\Gamma(h_*)$ is a free abelian group. In this case,
	\begin{equation*}
		H_0(X;h^*)\xrightarrow{\cong} \homcd(\Gamma(h_*),\Z),	
	\end{equation*}
	where $\homcd$ denotes the group of compactly determined homomorphisms.
\end{thmA}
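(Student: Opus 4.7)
The plan is to construct the comparison map $\Phi\colon H_0(X;h^*)\to \homcd(\Gamma(h_*),\Z)$ from a chain-level Kronecker pairing, realise both sides as colimits over the filtration $\{K_i\}$, and identify semi-stability as the precise obstruction to $\Phi$ being an isomorphism. For the construction: on $C_0(X;h^*)=\bigoplus_v h^*(v)$ set $\Phi(c)(s) = \sum_v \langle c_v, s_v\rangle$ using the Kronecker pairings $h^*(v)\otimes h_*(v)\to \Z$ available because the CM hypothesis concentrates both groups in degree $n$. Naturality of the Kronecker pairing with respect to the cosheaf restriction $h^*(\tau)\to h^*(v)$ and the sheaf extension $h_*(v)\to h_*(\tau)$ on each $1$-simplex $\tau$, combined with the compatibility condition on $s$, gives $\Phi(\partial\alpha) = 0$ for each $1$-cochain $\alpha$, so $\Phi$ descends to $H_0(X;h^*)$. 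Finite support of chains places its image in $\homcd$.

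Next I would exploit the filtration. Compactness of chains yields $H_0(X;h^*) = \colim_i H_0(K_i;h^*|_{K_i})$, and unwinding the definition of compactly determined realises $\homcd(\Gamma(h_*),\Z)$ as a colimit over $i$ of hom groups factoring through the restriction images $\tn{Im}\bigl(\Gamma(h_*)\to\Gamma(\hres{K_i})\bigr)$. On each finite stage, a computation combining the relative form of CM duality (Theorem~\ref{dualitytheorem}) with the ordinary universal coefficient theorem provides a natural short exact sequence with right-hand term $\hom(\Gamma(\hres{K_i}),\Z)$ and an $\tn{Ext}$ kernel, in which the right-hand surjection is the finite-stage analogue of $\Phi$. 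The isomorphism claim thus reduces to vanishing of the colimit of the $\tn{Ext}$ kernels.

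The main obstacle, and the crux of the argument, is the iff between semi-stability of $\{\Gamma(\hres{K_i})\}$ and freeness of $\Gamma(h_*)=\lim_i \Gamma(\hres{K_i})$, together with compatibility of this dichotomy with the vanishing of the $\tn{Ext}$ colimit. For the iff I would use Milnor's exact sequence
\begin{equation*}
0\to \lim\nolimits^1_i \hom(\Gamma(\hres{K_i}),\Z)\to \tn{Ext}(\Gamma(h_*),\Z)\to \lim_i \tn{Ext}(\Gamma(\hres{K_i}),\Z)\to 0
\end{equation*}
together with a structure theorem for countable inverse limits of finitely generated abelian groups: semi-stability lets one replace the tower by its stable-image subtower, whose limit is free abelian exactly when the stabilised groups are; conversely, persistent torsion in the tower assembles into non-trivial $\tn{Ext}$ contributions that prevent $\Gamma(h_*)$ from being free. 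Under semi-stability, the torsion in each $\Gamma(\hres{K_i})$ is eventually killed by the transition maps, so the colimit of $\tn{Ext}$ kernels vanishes and $\Phi$ becomes the claimed isomorphism.
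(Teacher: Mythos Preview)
Your construction of $\Phi$ via the Kronecker pairing and the passage to colimits over the filtration are correct and match the paper's setup (see Proposition~\ref{cosheafhomprop} and Equation~\eqref{hiscolim}). However, the argument has real gaps.

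First, a minor point: the short exact sequence with an $\tn{Ext}$ kernel on each finite stage is superfluous. Each $\Gamma(\hres{K_i})$ is a subgroup of the free abelian group $\bigoplus_{v\in K_i} h_n(v)$ and is therefore itself free, so the $\tn{Ext}$ term vanishes and one has $H_0(K_i;h^*|_{K_i})\cong\hom(\Gamma(\hres{K_i}),\Z)$ on the nose. The paper obtains this directly (Proposition~\ref{cosheafhomprop}) by dualising the augmented cochain complex, without needing CM duality or the universal coefficient theorem.

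Second, and more seriously, your proof of the ``if and only if'' does not work. The Milnor-type exact sequence you write, relating $\lim^1$ of $\hom(\Gamma_i,\Z)$ to $\tn{Ext}(\lim\Gamma_i,\Z)$, is not a standard identity: the familiar sequence of that shape computes $\tn{Ext}$ of a \emph{colimit}, not of an inverse limit, and there is no analogous formula for $\tn{Ext}(\lim\Gamma_i,\Z)$. Worse, the ``persistent torsion'' mechanism you invoke cannot apply here, since every $\Gamma(\hres{K_i})$ is free abelian and the tower is torsion-free throughout; failure of semistability in this setting is a rank phenomenon, not a torsion one. The paper does not attempt to prove this equivalence from scratch. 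Its Corollary~\ref{c:semistable} shows that semistability is equivalent to $H_0(X;h^*)\cong\colim_i\hom(\Gamma_i,\Z)$ being a countably generated free module (note: this is the colimit of duals, not $\Gamma(h_*)=\lim_i\Gamma_i$ as the introduction's phrasing suggests), and this equivalence is taken as a black box from \cite[Proposition~12.5.6]{Geoghegan2008}. The identification with $\homcd$ under semistability is then handled separately (Lemmas~\ref{semistablesplit}--\ref{colimhomcdlem}): semistability is used to show directly that the canonical map $\colim_i\Gamma_i^\vee\to\Gamma^\vee$ is injective with image exactly $\homcd$, with no $\tn{Ext}$ groups entering the argument.
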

This follows from Corollary~\ref{c:semistable} in the text, which is phrased over PIDs rather than $\mathbb{Z}$.

\subsection{A brief history: CM complexes, duality, and combinatorial sheaves}

\subsubsection{Cohen--Macaulay complexes}
The name \emph{Cohen--Macaulay complex} has its origin in a theorem of Reisner \cite{Reisner}, who showed that a finite simplicial complex is CM if and only if its \emph{face ring} (also called the \emph{Stanley--Reisner ring}) is  Cohen--Macaulay as a ring (although the CM definition for spaces given above is a reformulation due to Munkres \cite{Munkres1984}). This equivalence was a key component in Stanley's proof of the Upper Bound Conjecture on triangulations of spheres (\cite{Stanley}, see also \cite{Bjoerner2016, Hochster} for surveys), and is an important bridge between commutative algebra and combinatorial topology. In the study of Stanley-Reisner rings dualities related to the ones discussed in this paper have been observed \cite{Yanagawa2003,Reiner2001} for finite complexes. In \cite{Varbaro2024} these are used to prove duality theorems for (homology) manifolds.

A few years after the papers of Stanley and Reisner, Quillen proved that the geometric realizations of certain subgroup posets satisfy a stronger, homotopical version of the CM condition \cite{Quillen1978}. In part due to Quillen's influence,  the homotopical version of the CM property is more common in papers in algebraic/geometric topology.

\subsubsection{CM duality and combinatorial sheaves} In algebraic topology, CM spaces appear in Bredon's book on sheaf theory \cite{Bredon2012} as \emph{weak homology manifolds}, and without being explicitly named in McCrory's study of the \emph{Zeeman spectral sequence} \cite{McCrory1979}. Zeeman introduced his spectral sequence as a generalisation of Poincar\'e duality \cite{ZeemanDihomologyIII}. Zeeman also introduced combinatorial sheaves under the name of stacks \cite{Zeeman1962}. McCrory \cite{McCrory1979} took Zeeman's work further: he used Zeeman's spectral sequence to obtain results for relative (co)homology and gave a geometric interpretation of the groups on the $E_2$-page. He also abstractly identifies \cite[Section 6.3]{McCrory1979} a cap product on homology manifolds with the edge homomorphisms of this spectral sequence.

These duality results for CM complexes are not well-known. One reason for this might be the success of \emph{intersection homology} (pioneered by Goresky and MacPherson \cite{Goresky1980}) in studying (co)homology of singular spaces. Intersection homology is most often applied to \emph{pseudomanifolds}, which form a generalisation of manifolds orthogonal to CM spaces.

More recently, the cellular cousins to combinatorial sheaves have been used by Curry \cite{Curry2014} in the context of Topological Data Analysis. In \cite{Ginzburg1994} combinatorial sheaves are used to work with sheaves over operads.

\subsubsection{Verdier duality}
In the same year that Zeeman published his generalisation of Poincar\'e duality, Verdier \cite{Verdier1963} announced what is now called \emph{Verdier duality}. While our proof of CM duality does not directly use this, CM duality can be seen as a consequence of Verdier duality and our strategy can be viewed as providing an explicit counterpart to abstract (derived) considerations. For context, we briefly sketch these considerations here.

It is well-known that Verdier duality for sheaves gives rise to Poincar\'e duality (see for example \cite{Iversensheaf}). Lurie \cite[Section 5.5.5]{Lurie2017} recasts Verdier duality as the statement that taking compactly supported sections induces, after passing to derived categories, an equivalence between the category of sheaves and the category of cosheaves on a (nice) space. For a finite-dimensional locally finite Hausdorff space $X$,  the equivalence takes the the constant cosheaf $\Z$ to a sheaf of chain complexes $c_*$ which computes the local homology sheaf $h_*$. This gives rise to a \emph{hypercohomology spectral sequence}
\begin{equation}
	E^2_{-p,q}=H^p(X;h_q) \Rightarrow H_{q-p}(X; \Z). \label{hypercohspectseq}
\end{equation}
This spectral sequence is closely related to Zeeman's spectral sequence. When $X$ is locally CM the spectral sequence collapses to give the first isomorphism from Theorem~\ref{introdualitythm}. However, this proof makes no reference to a cap product or fundamental class, and inherently requires working in the derived setting.

\subsubsection{Duality groups} A major motivation for this paper comes from \emph{duality groups}. Bieri and Eckmann \cite{Bieri1973} defined a duality group of dimension $n$ to be a group $G$ that has a fundamental class $c\in H_n(G;\mathcal{D})$ so that
$$
H^p(G;M)\xrightarrow{c \cap -} H_{n-p}(G;\mathcal{D} \otimes M)
$$
is an isomorphism for all $G$-modules $M$ and some fixed $G$-module $\mathcal{D}$ called the \emph{dualizing module}. The reader will notice the similarity between this and CM Duality. This similarity is not coincidental; every duality group with finite classifying space known to the authors acts freely and cocompactly on a contractible CM space. This connection is explored further in \cite{WadeWass}, where we use Theorem~\ref{introdualsheaf} to give new descriptions of dualizing modules.

\subsection{A sketch of proof of CM duality}
The CM Duality Theorem (Theorem~\ref{dualitytheorem}) contains eight different duality isomorphisms for a locally finite oriented locally Cohen--Macaulay simplicial complex $X$ of dimension $n$ and a full subcomplex $L$. We now sketch a proof of the duality isomorphism between the compactly supported cohomology of the local homology sheaf $H^p_c(L;h_n|_L)$ restricted to $L$ and the homology $H_{n-p}(X,L\vc)$ of $X$ relative to the \emph{vertex complement} $L\vc$ spanned by the vertices of $X$ that are not in $L$. That is, we will sketch how to show that
$$
[X]\cap -\colon H^p_c(L;h_n|_L) \xrightarrow{\cong} H_{n-p}(X,L\vc; \Z).
$$

Our strategy of proof is closely related to the hypercohomology spectral sequence from Equation~\eqref{hypercohspectseq}. We use the spectral sequences coming from the row and column filtrations of the double complex $D^\bullet_\bullet(L)$ given by
$$
D^l_k(L)=\bigoplus_{\smallsimplex \in L_l} C_k(\smallsimplex).
$$
Here $L_l$ denotes the set of $l$-simplices of $L$ and $C_\bullet(\smallsimplex)$ is the simplicial chain complex that computes the local homology of $X$ at $\smallsimplex$. This double complex can always (without the local CM condition) be augmented with a map 
$$
C_{\bullet}(X,L\vc;\Z)\xrightarrow{\epsilon} D^\bullet_\bullet(L)
$$ 
to a double complex with exact rows (Proposition~\ref{doubleexactrows}). This gives the isomorphism
$$
\epsilon_*\colon H_{k}(X,L\vc) \xrightarrow{\cong} H_k(\mathbf{Tot}D^\bullet_\bullet(L)),
$$
between $H_{k}(X,L\vc)$ and the homology of the total complex. When $X$ is locally CM, the column spectral sequence that converges to $H_*(\mathbf{Tot}D^\bullet_\bullet(L))$ collapses, and we get
\begin{equation}\label{firstdualityiso}
	\epsilon_*\colon H_{n-p}(X,L\vc)\xrightarrow{\cong} H_c^p(L;h_n|_L),	
\end{equation}
where $H^p_c(L;h_n|_L)$ is the group in position $(p,n)$ on the $E_2$ page.

We then show that this isomorphism is inverse to the cap product with a fundamental class. We introduce the CM fundamental class $[X]\in H_n(X;h^n)$ in Section~\ref{fundclass}. The cap product with this class gives a map
$$
[X]\cap - \colon H^p(L;h_n|_L) \rar H_{n-p}(X,L\vc;\Z).
$$ 
To show this cap product agrees with an inverse $\dcap$ for $\epsilon_*$ we prove a beefed up version of the snake lemma (Python Proposition~\ref{pythonlemma}) homologouing a cycle in $D^l_k(L)$ to a cycle in $D^{0}_{k-l}(L)$ by doing iterative \emph{last-vertex lifts}. To our knowledge this procedure is completely new, and results in an explicit chain level formula for $\dcap$ that manifestly agrees with the chain level expression for $[X]\cap-$. This shows that the isomorphism in Equation~\eqref{firstdualityiso} is induced by the cap product.

\subsection{Outline} 
\subsubsection{Setup} The first half of Section~\ref{setupsection} is devoted to giving the necessary background for this paper. We introduce combinatorial sheaves and cosheaves, and then discuss the local homology sheaf and local cohomology cosheaf. After this we discuss the cap product in this setting, and show that it is a chain map and independent of any choices made.

\subsubsection{CM complexes} In Section~\ref{CMsection} we review Cohen--Macaulay complexes. We prove the characterisation of the degree zero homology of the local cohomology cosheaf of a CM complex from Theorem~\ref{introdualsheaf} here.

\subsubsection{CM Duality}
In Section~\ref{cmdualitysect} we prove CM duality (Theorem~\ref{dualitytheorem}). 

\subsubsection{Functoriality}
In Section~\ref{functorialitysection} we discuss naturality for CM duality, with respect to a class of maps we refer to as star-local homeomorphisms. These are simplicial local homeomorphisms that additionally behave well on stars of simplices.

\subsection*{Acknowledgments} We thank Vidit Nanda for many helpful discussions, particularly when this project was in its infancy. We thank Andr\'e Henriques for pointing us to Lurie's work on Verdier duality and other helpful discussions. We thank Martin Palmer and Arthur Souli\'e for inquiring about the validity of our results for non-commutative rings. Both authors are supported by the Royal Society of Great Britain.

\section*{Table of contents}
\startcontents
\printcontents{ }{1}{}

\section{Combinatorial Sheaves and Cosheaves}\label{setupsection}
In this section we introduce combinatorial sheaves and cosheaves in Section~\ref{introsimplsheaves} and discuss their cohomology and homology in Section~\ref{sheafcohomologysection}. We then introduce the local homology sheaf and local cohomology cosheaf in Section~\ref{lochomshsect}. In Section~\ref{capproductsection} we discuss the cap product in this setting.

\subsection{Introduction to combinatorial sheaves}\label{introsimplsheaves}
We fix our notation for simplicial complexes in \textsection\ref{simpcomplsect}. We then introduce the main tools of this paper, combinatorial sheaves and cosheaves, in \textsection\ref{simplshsect}, and discuss their sections in \textsection\ref{sectionsection}.

\subsubsection{Simplicial complexes}\label{simpcomplsect}
Throughout we fix a locally finite oriented simplicial complex $X$. Recall that an \emph{orientation} of a simplicial complex is an ordering of the vertices in each simplex such that face inclusions are order preserving. We will denote by $\poset$ the poset of simplices of $X$ partially ordered by inclusion. 

\begin{notation}\label{simplexnotation}
	We will denote the set of $k$-simplices of $X$ by $X_k$. For $0\leq j \leq k\leq l $ and simplices $\smallsimplex\in X_k$ and $\smallsimplexone\in X_l$ we will denote by
	\begin{itemize}
		\item $\smallsimplex \leq \smallsimplexone$ the simplex $\smallsimplex$ being a face of $\smallsimplexone$ of any codimension;
		\item $\smallsimplex < \smallsimplexone$ the simplex $\smallsimplex$ being a face of $\smallsimplexone$ of any positive codimension;
		\item $\smallsimplex_j\in X_0$ the $j$th vertex of $\smallsimplex$;
		\item $\smallsimplex_{\langle j \rangle}\in X_{k-1}$ the $j$th face of $\smallsimplex$, spanned by the vertices of $\smallsimplex$ excepting $\smallsimplex_j$;
		\item $\smallsimplex_{\leq j}\in X_j$ the $j$-front face of $\smallsimplex$, spanned by the vertices $\sigma_0, \sigma_1,\ldots, \sigma_j$;
		\item $\smallsimplex_{\geq j}\in X_{k-j}$ the $k-j$-back face of $\smallsimplex$, spanned by the vertices $\sigma_j,\sigma_{j+1} \ldots, \sigma_k$;
		\item $\smallsimplex \star \smallsimplexone$ the \emph{(internal) join}\footnote{This internal notion of join is different from the usual join in that we require the vertices to span a simplex \emph{in} $X$.}, the simplex spanned by the union of the vertices of $\smallsimplex$ and $\smallsimplexone$ (this is possibly degenerate, and is defined to be empty if the vertices do not span a simplex in $X$);
		\item $\st \smallsimplex$ the \emph{star of $\smallsimplex$}, the full subcomplex on those simplices that meet $\smallsimplex$;
		\item $\ostar \smallsimplex$ the \emph{open star of $\smallsimplex$}, the union of the interiors of the simplices that meet $\smallsimplex$;
		\item $\lk \smallsimplex$ the \emph{link of $\smallsimplex$}, consisting of those simplices $\smallsimplexone$ not intersecting $\smallsimplex$ for which $\smallsimplex\star \smallsimplexone$ is not empty. Equivalently, $\lk \smallsimplex=\st \smallsimplex - \ostar \smallsimplex$.
	\end{itemize}
\end{notation}

By a subcomplex $L$ we will always mean a full subcomplex, meaning that whenever vertices in $L$ span a simplex in $X$ that simplex is also in $L$. This is for simplicity: any subcomplex is homeomorphic to a full subcomplex of the barycentric subdivision of $X$.

\subsubsection{Modules over rings}\label{s:modules}
Throughout this paper we fix a ring $R$, note that we allow for $R$ to be noncommutative and will be careful in distinguishing left and right modules. We will denote the categories of left, right and bimodules over $R$ by $\rmod$, $\modr$ and $\rmodr$, respectively. The symbol $\otimes$ will denote the tensor product $\otimes_R$ over $R$, and $\hom$ will denote $R$-linear maps in $\rmod$ or $\modr$. If both $M$ and $N$ are bimodules, then $\hom_{\rmod}(M,N)$ carries a bimodule structure given by 
\begin{equation*}
	(r\cdot f \cdot s)(m)= f(mr)s,
\end{equation*}
for $r,s \in R$ and $m \in M$. More generally, the above gives $\hom_{\rmod}(M,N)$ a left action (by $r \in R$) if $M$ is a bimodule, and a right action (by $s \in R$) if $N$ is a bimodule. If $M$ and $N$ are only left $R$-modules and $R$ is noncommutative then there is no natural action of $R$ on $\hom_{\rmod}(M,N)$.

\subsubsection{Combinatorial sheaves and cosheaves}\label{simplshsect}
A \emph{combinatorial sheaf (of left $R$-modules)} $\mathcal{F}$ on $X$ is a covariant functor from the poset $\poset$ to the category of left $R$-modules $\rmod$.\footnote{One can show that the category of combinatorial sheaves on a simplicial complex $X$ is equivalent to the category of constructible sheaves subordinate to the natural stratification induced by the simplicial structure.} A \emph{combinatorial cosheaf (of right $R$-modules)} $\mathcal{G}$ on $X$ is a contravariant functor from $\poset$ to right $R$-modules $\modr$. In the rest of this text we will refer to these as sheaves and cosheaves whenever there is no confusion possible with the classical notions of sheaf and cosheaf. Our sheaves and cosheaves will often be valued in bimodules. However, the constructions done in this paper will only rely on a left module structure for sheaves and a right module structure for cosheaves.

To expand on the above definitions, a sheaf $\mathcal{F}$ assigns a module $\mathcal{F}(\smallsimplex)$ to each simplex of $X$ and if $\smallsimplex < \smallsimplexone$ there is an associated morphism  \[ \mathcal{F}(\smallsimplex < \smallsimplexone) \co \mathcal{F}(\smallsimplex) \to \mathcal{F}(\smallsimplexone). \] Furthermore these maps are compatible in the obvious way: if $\smallsimplex < \smallsimplexone < \nu$ then $\mathcal{F}(\smallsimplex < \nu)= \mathcal{F}(\smallsimplexone < \nu) \circ \F(\smallsimplex < \smallsimplexone)$. A cosheaf is similar, however the maps go down rather than up (with respect to the dimension of the simplices): if $\smallsimplex < \smallsimplexone$ we have a morphism $\G(\smallsimplexone > \smallsimplex)$ from $\G(\smallsimplexone)$ to $\G(\smallsimplex)$ and again these morphisms are compatible in the obvious way.

\subsubsection{Sections and cosections of sheaves and cosheaves}\label{sectionsection}
Given a (combinatorial) sheaf $\mathcal{F}$ and a subcomplex $L\subset X$ we have the \emph{restriction} $\mathcal{F}|_L$ of $\mathcal{F}$ to $L$ given by precomposing $\mathcal{F}$ with the inclusion of $L$ into $X$. The module of \emph{sections $\Gamma(\mathcal{F},L)$ on $L$} is the left $R$-module of natural transformations between the constant sheaf $R$ and $\mathcal{F}|_L$:
$$
\Gamma(\mathcal{F},L)= \{s\colon R \Rightarrow \mathcal{F}|_L\}.
$$
Unpacking this, we see that a section $s$ is given by a choice of element $s_\smallsimplex\in \mathcal{F}(\smallsimplex)\cong \hom(R,\mathcal{F}(\smallsimplex))$ for each simplex $\smallsimplex$ of $L$, subject to the condition that for $\smallsimplex< \smallsimplexone$ we have (here, and on many subsequent occasions, we will drop the parentheses around arguments to prevent a wall of brackets)
$$
\mathcal{F}(\smallsimplex<\smallsimplexone) s_\smallsimplex= s_\smallsimplexone.
$$
Note that a section is completely determined by its values on the vertices of $L$. 
The left $R$-module structure on $\Gamma(\mathcal{F},L)$ is induced by the left $R$-module structure on $\hom_{\rmod}(R,\mathcal{F}(\smallsimplex))$ cf. \textsection\ref{s:modules}.

\begin{warning}
	One might expect that $\Gamma(\mathcal{F},\smallsimplex)$ is the same as $\mathcal{F}(\smallsimplex)$, but this is generally false unless $\smallsimplex$ is a vertex. In terms of classical sheaves $\mathcal{F}(\smallsimplex)$ can be thought of as the sections over the open star $\ostar \smallsimplex$, while $\Gamma(\mathcal{F},\smallsimplex)$ is the module of sections on the union of the open stars of the vertices of $\smallsimplex$.
\end{warning}

\subsection{Sheaf cohomology and cosheaf homology}\label{sheafcohomologysection}
In this section we define sheaf cohomology and cosheaf homology via explicit chain complexes in \textsection\ref{homcohomsimplsh}, and define relative versions in \textsection\ref{relativecohsect}. The definitions given will depend on a choice of orientation of $X$; we discuss the orientation independence of the resulting (co)homologies in \textsection\ref{orindepcoh}.

\subsubsection{Homology and cohomology for combinatorial (co)sheaves}\label{homcohomsimplsh}
Recall that $X_k$ denotes the set of $k$-simplices of $X$ and that $X$ is assumed to be oriented. Given a combinatorial sheaf $\F$ there is a cochain complex $C^*(X,\F)$ of left $R$-modules where 
\begin{equation}\label{sheafcohomchains}
	C^k(X,\F)= \prod_{\smallsimplex \in X_k} \F(\smallsimplex).
\end{equation}
The boundary operator $\delta$ is defined similarly to usual cohomology: if $\shcochain \in C^k(X,\F)$ and $\smallsimplex$ is a $k+1$ simplex then  
\begin{equation}\label{sheafboundary}
	(\delta\shcochain)_\smallsimplex = \sum_{i=0}^{k+1} (-1)^i\F(\smallsimplex_i<\smallsimplex)\shcochain_{\smallsimplex_{\langle i\rangle}}
\end{equation}
is the value of $\delta\shcochain$ at $\smallsimplex$, where $\shcochain_{\smallsimplex_{\langle i\rangle}}$ is the value of $\shcochain$ on the $i$th face of $\smallsimplex$. We define the resulting cohomology left modules $H^*(X,\mathcal{F})$ to be the \emph{sheaf cohomology of $\mathcal{F}$ on $X$}. We define the \emph{compactly supported cohomology of $\mathcal{F}$ on $X$} (denoted $H_c^*(X,\mathcal{F})$) to be the cohomology obtained by replacing the direct product with a direct sum in the definition of the chain complex (i.e. we require the cochains to be nonzero on only finitely many simplices). The local finiteness of $X$ ensures that the boundary map remains well-defined.

The (compactly supported) cohomology in degree zero of a sheaf $\mathcal{F}$ on a simplicial complex $X$ is exactly its module of (compactly supported) global sections $\Gamma(\mathcal{F})$ (or $\Gamma_c(\mathcal{F})$). One way to see this is to note that the cochain complexes computing (compactly supported) sheaf cohomology admit an augmentation:
\begin{align}\label{augmentedsheafcoh}
	0&\rar \Gamma(\mathcal{F}) \rar C^0(X, \mathcal{F}) \rar C^1(X;\mathcal{F})\rar \dots	\\
	0&\rar \Gamma_c(\mathcal{F}) \rar C_c^0(X, \mathcal{F}) \rar C_c^1(X;\mathcal{F})\rar \dots.\label{augmentedcompact}
\end{align}
Both maps take a section $s\in \Gamma(\mathcal{F})$ (or $\Gamma_c(\mathcal{F})$) to $\sum_{v \in X} s_v$, where $s_v \in \mathcal{F}(v)$ is the value of the section at $v$. 

\emph{Cosheaf homology of a cosheaf $ \mathcal{G}$} is defined as the homology of the chain complex $C_*(X,\G)$ of right $R$-modules with \[C_n(X,\G)=\bigoplus_{\smallsimplex \in X_n} \G(\smallsimplex). \] The boundary operator $\partial$ is defined on $\chain \in C_n(X,\G)$ by 
\begin{equation}\label{coshcohdiff}
	\partial\chain=\sum_{\smallsimplex \in X_n}\sum_{i=0}^n (-1)^i\G(\smallsimplex > \smallsimplex_i)\chain_\smallsimplex.
\end{equation}
The homology of this chain complex is denoted $H_*(X, \calg)$, note that this is a right $R$-module. \emph{Locally finite homology} $H^{\lf}_*(X,\G)$ is defined by replacing the sum with a direct product in the definition of the chain complex (i.e. allowing for infinite chains). This can be seen as a simplicial version of Borel--Moore homology.\footnote{We remind the reader that we assume our simplicial complexes to be locally finite, dropping this assumption would lead to issues when defining the differential in locally finite homology.}

\begin{remark}
	For sheaves and cosheaves with values in $R$-bimodules, the differentials are bimodule maps, and the resulting (co)homologies are bimodules.
\end{remark}

\begin{notation}\label{suppressRnotation}
We will often suppress our fixed ring $R$ from the notation and write $C^*(X)$ (or $C^*_c(X)$) for the (compactly supported) cochains of $X$ with coefficients in the constant sheaf $R$, with corresponding notation for the cohomology of these chain complexes. We similarly write $C_*(X)$ (or $C^\lf_*(X)$) for the (locally finite) chains of $X$ with coefficients in the constant cosheaf $R$, extended to notation for the homology in the obvious way. 
\end{notation}

\subsubsection{Relative sheaf cohomology and cosheaf homology}\label{relativecohsect}
Let $L$ be a subcomplex of $X$. For a combinatorial sheaf $\F$ or cosheaf $\G$ on $X$ we have the restrictions $\F|_{L}$ and $\G|_L$ to $L$, with the associated (co)chain complexes $C^{\bullet}(L,\F|_L)$ and $C_\bullet(L,\G|_L)$ and (co)homology $H^*(L,\F|_L)$ and $H_*(L,\G|_L)$. The inclusion map $i\colon L\hookrightarrow X$ induces maps
\begin{align*}
	i^\bullet\colon&C^{\bullet}(X,\F)\to C^{\bullet}(L,\F|_L)\\
	i_\bullet\colon&C_\bullet(L,\G|_L) \to C_\bullet(X,\G).
\end{align*}
Using these maps, we define the relative (co)homology of the pair $(X,L)$ to be
\begin{align*}
	H^*(X,L;\F)&:= H^*(\textnormal{Ker }i^\bullet)\\
	H_*(X,L;\G)&:= H_*(\textnormal{Coker }i_\bullet).
\end{align*}
The compactly supported and locally finite versions are defined analogously. We will extend the convention from Notation~\ref{suppressRnotation} and often suppress $R$ from the notation if we are taking coefficients in the constant (co)sheaf.

\subsubsection{Orientation independence of (co)homology}\label{orindepcoh}
While the differentials for sheaf cohomology (Equation \eqref{sheafboundary}) and cosheaf homology (Equation \eqref{coshcohdiff}) depend on a choice of orientation of $X$ the (co)homology does not. A change of orientation gives for each $k$-simplex a permutation of its vertices, and conversely a change of orientation for a locally finite complex is specified by a (possibly infinite) collection of permutations. 

\begin{lemma}\label{homorderinv}
	Let $X$ be a locally finite simplicial complex and let $\mathcal{F}$ be sheaf on $X$. Pick two orientations of $X$ and denote the corresponding simplicial chain complexes by $C^\bullet(X;\mathcal{F})$ and $\tilde{C}^\bullet(X;\mathcal{F})$. Then the map
	\begin{align*}
		C^k(X;\mathcal{F})&\rar \tilde{C}^k(X;\mathcal{F})\\
		\sum_{\smallsimplex}\shcochain_\smallsimplex &\mapsto \sum_{\smallsimplex} \sg{\smallsimplex} \shcochain_{\smallsimplex}
	\end{align*}
	where $\sg{\smallsimplex}$ is the sign of the permutation corresponding to the change of orientation on $\smallsimplex$, is an isomorphism of chain complexes.
\end{lemma}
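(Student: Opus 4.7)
The map is evidently an isomorphism of graded $R$-modules, since in each degree it is just multiplication on the $\sigma$-coordinate by the unit $\sg\sigma\in\{\pm 1\}$. So all the content is in verifying that it intertwines the two differentials $\delta$ and $\tilde\delta$ coming from the two orientations. The plan is to reduce this to a standard cofactor-type sign identity.

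To set up the calculation, fix a $(k+1)$-simplex $\sigma$ and let $\pi_\sigma\in S_{k+2}$ be the permutation with $\tilde\sigma_j=\sigma_{\pi_\sigma(j)}$, so that $\sg\sigma=\mathrm{sgn}(\pi_\sigma)$. Since face orderings are inherited from the ambient orientation, the face $\tilde\sigma_{\langle j\rangle}$ equals $\sigma_{\langle\pi_\sigma(j)\rangle}$ as a simplex, but with a vertex ordering that differs by the permutation $\rho_j\in S_{k+1}$ obtained from $\pi_\sigma$ by deleting position $j$ in the domain and position $\pi_\sigma(j)$ in the codomain. Thus $\sg{\tilde\sigma_{\langle j\rangle}}=\mathrm{sgn}(\rho_j)$, interpreted as the change-of-orientation sign on the restriction of the two orderings to this face.

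The key combinatorial input is the standard identity
\[
\mathrm{sgn}(\pi_\sigma)\;=\;(-1)^{j+\pi_\sigma(j)}\,\mathrm{sgn}(\rho_j),
\]
which follows by bringing $j$ to the front in the domain (at a cost of $(-1)^j$) and $\pi_\sigma(j)$ to the front in the codomain (at a cost of $(-1)^{\pi_\sigma(j)}$) and observing that the remaining permutation is precisely $\rho_j$. Equivalently, $(-1)^j\,\mathrm{sgn}(\rho_j)=(-1)^{\pi_\sigma(j)}\,\sg\sigma$.

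To finish, plug into the formula \eqref{sheafboundary} for $\tilde\delta$ applied to the image $\widetilde{\shcochain}$ of $\shcochain$ under the proposed map, using that $\mathcal{F}(\tau<\sigma)$ depends only on the inclusion as a pair of unordered simplices and therefore gives the same morphism in either orientation. Each summand then picks up the extra sign $(-1)^j\,\mathrm{sgn}(\rho_j)$; applying the identity above rewrites this as $(-1)^{\pi_\sigma(j)}\,\sg\sigma$, and pulling $\sg\sigma$ out front and reindexing via $i=\pi_\sigma(j)$ yields exactly $\sg\sigma\,(\delta\shcochain)_\sigma$, which is the $\sigma$-coordinate of the image of $\delta\shcochain$. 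The only potential obstacle is the sign bookkeeping — in particular being careful about which of the two orderings on each face is the relevant one, and that the deletion of positions in both the domain and codomain of $\pi_\sigma$ really does match the orderings coming from restricting each orientation to the face — but once these identifications are in place the calculation is essentially a cofactor expansion, and the result follows.
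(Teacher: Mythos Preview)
Your proof is correct. The paper itself leaves this lemma as an exercise, so there is no argument to compare against; your reduction to the cofactor-type identity $\mathrm{sgn}(\pi_\sigma)=(-1)^{j+\pi_\sigma(j)}\,\mathrm{sgn}(\rho_j)$ and the subsequent reindexing is exactly the expected computation, and your care in identifying $\tilde\sigma_{\langle j\rangle}$ with $\sigma_{\langle\pi_\sigma(j)\rangle}$ together with the induced face permutation $\rho_j$ is the right bookkeeping.
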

\begin{proof}
	Left as an exercise.
\end{proof}

The obvious analogue of this map induces an isomorphism in homology.

\subsection{The local homology sheaf and the local cohomology cosheaf}\label{lochomshsect}
We now explain how to view local homology as a sheaf and local cohomology as a cosheaf in \textsection\ref{lochomsheafsect}, and provide tools for computing them in \textsection\ref{lochomcompsection}.
\subsubsection{Local homology and cohomology}\label{lochomsheafsect}
Recall that the \emph{open star} $\ostar(\smallsimplex)$ of a simplex $\smallsimplex$ is the set of (open) simplices $\smallsimplexone$ such that $\smallsimplex \leq \smallsimplexone$. The complement $X- \ostar(\smallsimplex)$ is then the simplicial subcomplex of $X$ spanned by the simplices of $X$ that do not contain $\smallsimplex$. The local homology of $X$ at a simplex $\smallsimplex$ is the graded $R$-bimodule
\[ h_*(\smallsimplex)=H_*(X,X-\ostar(\smallsimplex)). \] 
If $\smallsimplex < \smallsimplexone$ then $\ostar(\smallsimplexone) \subset \ostar(\smallsimplex) $ and $X- \ostar(\smallsimplex) \subset X-\ostar(\smallsimplexone)$. The identity map on $X$ induces a 
morphism 
\[h_*(\smallsimplex < \smallsimplexone) \co H_*(X, X- \ostar(\smallsimplex)) \to H_*(X,X-\ostar(\smallsimplexone)),\] 
making $h_*$ into a combinatorial sheaf with values in graded $R$-bimodules called the \emph{local homology sheaf} of $X$. The \emph{local cohomology cosheaf} is defined by replacing homology with cohomology, so that 
$$
h^*(\smallsimplex)=H^*(X, X- \ostar(\smallsimplex)). 
$$
When $\smallsimplex < \smallsimplexone$ there is an induced morphism $h^*(\smallsimplexone>\smallsimplex)\co h^*(\smallsimplexone) \to h^*(\smallsimplex)$ given by the same inclusion of pairs as above.

For a fixed $n$ there are associated sub (co)sheaves $h_n$ and $h^n$ given by taking (co)homology in a given degree. In this paper, we will mostly be working with spaces where there exists $n$ such that $h_*=h_n$ and $h^*=h^n$, and as a result may blur this distinction. 

When $\mathcal{G}$ is a cosheaf the \emph{local homology sheaf  $(h_\mathcal{G})_*$ of $X$ with coefficients in $\mathcal{G}$} is the sheaf of right $R$-modules obtained analogously by taking relative homology with coefficients in $\mathcal{G}$. Similarly when $\mathcal{F}$ is a sheaf we have the \emph{local cohomology cosheaf  $(h^\mathcal{F})^*$ of $X$ with coefficients in $\mathcal{F}$}.

\subsubsection{Computing $h_n(\smallsimplex)$ and $h^n(\smallsimplex)$}\label{lochomcompsection}
For any simplex $\smallsimplex  \in X$, the complex $C_\bullet(X-\ostar(\smallsimplex))$ is the subcomplex of $C_\bullet(X)$ spanned by simplices that do not contain $\smallsimplex$, so the quotient complex $C_\bullet(X,X-\ostar \smallsimplex)=C_\bullet(X)/ C_\bullet(X-\ostar(\smallsimplex))$ is freely generated by simplices that contain $\smallsimplex$. Cf. Notation~\ref{suppressRnotation} we have suppressed our fixed ring $R$ from the notation here. We introduce the following:
\begin{notation}\label{undernotation}
	For a simplex  $\smallsimplex \in X$ we denote by $\smallsimplex\under \bigsimplex \in C_k(X,X-\ostar \smallsimplex)$ the element associated to a $k$-simplex $\bigsimplex\in X_k$. Note that $\smallsimplex\under \bigsimplex=0$ whenever $\smallsimplex \nleq \bigsimplex$.
\end{notation}

We record the following observation:

\begin{lemma}\label{alwayszerolochom}
	Let $\smallsimplex \in X_l$. Then $h_k(\smallsimplex)=0$ for $k<l$. Additionally, if $\smallsimplex$ is not maximal then $h_l(\smallsimplex)=0$.
\end{lemma}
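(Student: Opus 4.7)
The plan is to work directly with the relative chain complex $C_\bullet(X,X-\ostar\smallsimplex) = C_\bullet(X)/C_\bullet(X-\ostar\smallsimplex)$, using the observation recorded immediately before the lemma: this complex is freely generated (as an $R$-module) by those simplices $\bigsimplex$ with $\smallsimplex \leq \bigsimplex$, with generators $\smallsimplex \under \bigsimplex$ as in Notation~\ref{undernotation}.

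\textbf{Vanishing for $k<l$.} First I would observe that if $\smallsimplex \leq \bigsimplex$ and $\smallsimplex$ is an $l$-simplex, then $\dim\bigsimplex \geq l$. Consequently $C_k(X,X-\ostar\smallsimplex) = 0$ for every $k < l$, and so $h_k(\smallsimplex) = 0$ trivially.

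\textbf{Vanishing in top degree when $\smallsimplex$ is not maximal.} For the second statement I would use the same freeness description in degrees $l$ and $l+1$. The only $l$-simplex containing $\smallsimplex$ is $\smallsimplex$ itself, so $C_l(X,X-\ostar\smallsimplex) = R\cdot(\smallsimplex\under\smallsimplex)$. Combined with $C_{l-1}(X,X-\ostar\smallsimplex)=0$ from the previous paragraph, this reduces the statement $h_l(\smallsimplex)=0$ to showing the boundary map $\partial \co C_{l+1}(X,X-\ostar\smallsimplex)\to C_l(X,X-\ostar\smallsimplex)$ is surjective. Since $\smallsimplex$ is not maximal, there exists some $(l+1)$-simplex $\smallsimplexone$ with $\smallsimplex < \smallsimplexone$. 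The key calculation is then that in the quotient complex, $\partial(\smallsimplex\under \smallsimplexone) = \pm\,\smallsimplex\under\smallsimplex$: among the $l+2$ codimension-one faces of $\smallsimplexone$, exactly one contains $\smallsimplex$ (namely $\smallsimplex$ itself, obtained by deleting the unique vertex of $\smallsimplexone$ not lying in $\smallsimplex$), while the others are killed in the quotient by $C_\bullet(X-\ostar\smallsimplex)$. Hence $\smallsimplex\under\smallsimplex$ lies in the image of $\partial$, so $h_l(\smallsimplex) = 0$.

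There is no real obstacle here: the whole lemma follows mechanically once one translates the statement about local homology into the explicit description of the quotient complex and runs the short dimension-count plus the one-line boundary calculation above. The only thing to be mildly careful about is signs and the orientation convention, but as remarked in Section~\ref{orindepcoh} the resulting (co)homology is orientation-independent, so the sign $\pm$ in $\partial(\smallsimplex\under\smallsimplexone)=\pm\,\smallsimplex\under\smallsimplex$ does not affect the conclusion.
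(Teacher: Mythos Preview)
Your argument is correct and is exactly the direct computation one would give. The paper itself does not supply a proof of this lemma: it is simply ``recorded'' as an observation following the description of the quotient complex $C_\bullet(X,X-\ostar\smallsimplex)$, so your write-up is a faithful expansion of what the paper leaves implicit.
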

When $X$ is $n$-dimensional, we have that 
\begin{equation}\label{hndescript}
	h_n(\smallsimplex) = \ker \left(C_n(X,X-\ostar \smallsimplex)\xrightarrow{d} C_{n-1}(X,X-\ostar \smallsimplex)\right).	
\end{equation}
Remembering that $X$ is assumed to be locally finite, every element $\localchain \in h_n(\smallsimplex)$ can be written as a sum $\localchain= \sum r_i \smallsimplex\under\bigsimplex_i$, where $r_i \in R$, and $\sum r_i \partial\bigsimplex_i$ is an element of $C_{n-1}(X-\ostar(\smallsimplex))$.	
If $\smallsimplex<\smallsimplexone$ then the sheaf maps have the simple form $h_n(\smallsimplex < \smallsimplexone)(\sum r_i\smallsimplex\under \bigsimplex_i)=\sum r_i \smallsimplexone \under\bigsimplex_i$, where the only difference is that the homology class is relative to $X-\ostar(\smallsimplexone)$ in the image rather than $X-\ostar(\smallsimplex)$. In particular, $h_n(\smallsimplex< \smallsimplexone)$ takes $\smallsimplex\under \bigsimplex_i$ to zero if $\bigsimplex_i$ does not contain $\smallsimplexone$. 

Recall that a ring $R$ is called \emph{hereditary} if every submodule of a projective module is projective. Any principal ideal domain is hereditary. We have:
\begin{lemma}\label{hnprojectivelemma}
	Let $R$ be a hereditary ring. If $X$ is a complex of dimension $n$, then $h_n(\smallsimplex)$ is projective (as a left $R$-module) for every $\smallsimplex \in X$.
\end{lemma}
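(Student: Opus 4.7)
The plan is to combine the explicit description of $h_n(\smallsimplex)$ given in Equation~\eqref{hndescript} with the defining property of hereditary rings.

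First I would observe that $C_n(X, X - \ostar \smallsimplex)$ is, by the discussion in \textsection\ref{lochomcompsection} preceding Notation~\ref{undernotation}, the free left $R$-module on the set of $n$-simplices of $X$ that contain $\smallsimplex$. In particular, it is projective as a left $R$-module (regardless of hereditariness).

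Next, Equation~\eqref{hndescript} identifies $h_n(\smallsimplex)$ with the kernel of the differential
$$
d\co C_n(X, X - \ostar \smallsimplex) \rar C_{n-1}(X, X - \ostar \smallsimplex),
$$
exhibiting $h_n(\smallsimplex)$ as a left $R$-submodule of a free (hence projective) left $R$-module. Since $R$ is hereditary, every submodule of a projective left $R$-module is projective, so $h_n(\smallsimplex)$ is projective.

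There is no real obstacle: the only subtle point is to note that $C_n(X, X - \ostar \smallsimplex)$ really is free as a left module (which follows directly from the fact that the quotient complex $C_\bullet(X)/C_\bullet(X - \ostar \smallsimplex)$ is freely generated as a left $R$-module by the $n$-simplices containing $\smallsimplex$, cf.\ Notation~\ref{undernotation}), and to invoke the hereditary hypothesis exactly once. Everything else is routine.
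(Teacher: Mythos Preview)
Your proof is correct and follows exactly the same approach as the paper: both identify $h_n(\smallsimplex)$ as a submodule of the free module $C_n(X, X-\ostar\smallsimplex)$ and invoke the hereditary hypothesis. The paper's version is simply terser.
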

\begin{proof}
 Note that $h_n(\smallsimplex)$ is a submodule of the free $R$-module $C_n(X,X-\ostar \smallsimplex)$.
\end{proof}

We will use $\bigsimplex^*$ to denote the basis element of $C^\bullet(X)$ corresponding to a simplex $\bigsimplex$. The complex $C^\bullet(X,X-\ostar \smallsimplex)$ is the subcomplex of $C^\bullet(X)$ spanned by the $\bigsimplex^*$ such that $\bigsimplex$ contains $\smallsimplex$. When $X$ is $n$-dimensional the local cohomology in degree $n$ has the simple description 
$$
h^n(\smallsimplex) =\coker \left( C^{n-1}(X,X-\ostar \smallsimplex) \xrightarrow{\delta} C^n(X,X-\ostar \smallsimplex) \right). 
$$
As above, this allows one to write the cosheaf maps from $h^n(\smallsimplex)$ to $h^n(\smallsimplexone)$  as $h^n( \smallsimplex>\smallsimplexone)[\sum r_i \smallsimplex\under \bigsimplex_i^*]=[\sum r_i \smallsimplexone \under \bigsimplex_i^*]$, extending Notation~\ref{undernotation} in the obvious way. 

\begin{notation}[Abuse of notation]
	In the rest of this paper it will often be convenient to work in terms of the \emph{(co)chains} $\smallsimplex\under \bigsimplex$ (or $\smallsimplex\under\bigsimplex^*$) for the local (co)homology, rather than the classes. As a result, maps will often be implicitly defined at the level of $C_*(X,X-\ostar\smallsimplex)$ (or $C^*(X,X-\ostar\smallsimplex)$) rather than $h_*(\smallsimplex)$ (or $h^*(\smallsimplex)$).
\end{notation}

\subsection{Sheaf cohomology and cosheaf homology, and simplicial homology}
The reader may have noticed that the chain complexes introduced in \textsection\ref{homcohomsimplsh} are similar to those for simplicial (co)homology. We will make use of this similarity in our proofs. To be able to do so, we capture the similarity formally using the notions of \emph{$\mathcal{G}$-transport} and \emph{$\mathcal{F}$-dual}:

\begin{definition}[$\mathcal{G}$-transport and $\mathcal{F}$-dual] \label{d:gtransport}\label{d:fdual}
	Consider the category $\mathcal{C}(X)$ with objects given by free $R$-modules $A$ on a set $I_A$ indexed by elements of the form $\smallsimplex\under \bigsimplex$ with $\smallsimplex<\bigsimplex$ simplices in $X$ (we allow $\emptyset\under \bigsimplex$ and set $\emptyset\under \bigsimplex=\bigsimplex$). If $B \in \mathcal{C}(X)$ has basis $I_B$, a morphism from $A$ to $B$ in $\mathcal{C}(X)$ is a homomorphism $f: A \to B$ such that $f$ has a matrix representation with respect to the bases $I_A$ and $I_B$. The matrix entries $f^{\smallsimplex\under \bigsimplex}_{\smallsimplexone\under\bigsimplexone}$ for $\smallsimplex\under \bigsimplex \in I_A$ and $\smallsimplexone\under\bigsimplexone \in I_B$ are required to be in the image of $\Z \rar R$ and $0$ whenever $\bigsimplex\ngeq\bigsimplexone$. 
	
	Given a cosheaf $\mathcal{G}$ we have a a covariant additive functor called \emph{$\mathcal{G}$-transport} that sends an object $A\in \mathcal{C}(X)$ to $A_\mathcal{G}=\bigoplus_{\smallsimplex\under \bigsimplex \in I_A} \mathcal{G}(\bigsimplex)$, and we will write $\smallsimplex\under \psi_\bigsimplex$ for elements of $\mathcal{G}(\bigsimplex)$ in the $\smallsimplex\under \bigsimplex$-summand. A morphism $f\colon A \rar B$ in $\mathcal{C}(X)$ is sent to the map
	\begin{align*}
		f_\mathcal{G}\colon& \bigoplus_{\smallsimplex\under \bigsimplex \in I_A} \mathcal{G}(\bigsimplex)\rar  \bigoplus_{\smallsimplexone\under\bigsimplexone \in I_B} \mathcal{G}(\bigsimplexone)\\
		&\smallsimplex\under \psi_\bigsimplex \mapsto \sum_{\smallsimplexone\under\bigsimplexone} f^{\smallsimplex\under \bigsimplex}_{\smallsimplexone\under\bigsimplexone}\smallsimplexone \under \mathcal{G}(\bigsimplex>\bigsimplexone)\psi_\bigsimplex.
	\end{align*}
	
	Given a sheaf $\mathcal{F}$ the contravariant additive functor called \emph{$\mathcal{F}$-dual} sends $(A,I_A)\in \mathcal{C}(X)$ to $A^\mathcal{F}= \prod_{\smallsimplex\under \bigsimplex \in I_A} \hom(\langle \smallsimplex\under \bigsimplex \rangle, \mathcal{F}(\bigsimplex))$, we will write $\smallsimplex\under \phi^\bigsimplex$ for elements of $\hom(\langle \smallsimplex\under \bigsimplex \rangle, \mathcal{F}(\bigsimplex))$. A morphism $f\colon A \rar B $ in $\mathcal{C}(X)$ is sent to the map
	\begin{align*}
		f^\mathcal{F}\colon& \prod_{\smallsimplexone\under \bigsimplexone \in I_B} \mathcal{F}(\bigsimplexone)\rar  \prod_{\smallsimplex\under \bigsimplex \in I_B} \mathcal{F}(\bigsimplex)\\
		&\smallsimplexone \under \phi^\bigsimplexone \mapsto \sum f^{\smallsimplex\under \bigsimplex}_{\smallsimplexone\under\bigsimplexone}\smallsimplex\under \mathcal{F}(\bigsimplexone< \bigsimplex)\phi^\bigsimplexone.
	\end{align*}
	We similarly have a category $\mathcal{C}^*(X)$ where objects are spanned by elements of the form $\smallsimplex\under \bigsimplex^*$ and matrix entries $f^{\smallsimplex\under \bigsimplex^*}_{\smallsimplexone\under\bigsimplexone^*}$ vanish whenever $\bigsimplex\nleq\bigsimplexone$ and an associated covariant additive functor of \emph{$\F$-transport} that we will denote by $(-)_\mathcal{F}$.
\end{definition}

One checks that if $f,g$ are composable morphisms in $\mathcal{C}(X)$ we have $(fg)_\mathcal{G}=f_\mathcal{G} \circ g_\mathcal{G}$ and $(fg)^\mathcal{F}=g^\mathcal{F} \circ f^\mathcal{F}$. Observe that as $C_k(X)$ is spanned by $\{\smallsimplex\under \smallsimplex\}_{\smallsimplex\in X_k}$:
\begin{align*}
	C_\bullet(X;\G)&=C_\bullet(X)_\G\\
	C^\bullet(X;\F)&=C_\bullet(X)^\F=C^\bullet(X)_\F,
\end{align*}
and similarly for relative (co)homology. When (as will be the case for Cohen--Macaulay spaces) the local homology modules of $X$ are free $R$-modules, we further have
\begin{align*}
	C_\bullet(X;(h_\G)_*)&=C_\bullet(X; h_*)_\G\\
	C^\bullet(X;(h^\F)^*)&=C_\bullet(X;h^*)^\F,
\end{align*}
for the (co)homology of the local (co)homology sheaf with coefficients.

\subsection{The cap product}\label{capproductsection} 
In this section we define cap products for local (co)homology. There will be several variants, half of which use a pairing between the local homology and local cohomology introduced in \textsection\ref{pairingsect}. After giving a chain level formula for the cap products in \textsection\ref{simplicialcapsection}, we show that they descend to (co)homology by showing that they define a chain map. The chain level definition of the cap products depends on an orientation of the simplicial complex: we show in \textsection\ref{orientationindepcapsection} that in (co)homology this orientation dependence disappears. After this, we define the cap product for relative (co)homology in \textsection\ref{relativecapssection}.

\subsubsection{Pairing between local homology and cohomology}\label{pairingsect}
For a fixed degree $n$ and any simplex $\smallsimplex$ there is the usual pairing (recall that $\otimes=\otimes_R$)
\begin{equation}\label{pairingone}
	\langle\cdot,\cdot\rangle\colon h^n(\smallsimplex) \otimes h_n(\smallsimplex) \to R
\end{equation}	
evaluating a relative cohomology class on a relative homology class. Note that this pairing defines a bimodule map. 

\subsubsection{Simplicial cap product}\label{simplicialcapsection}
There are two versions of the cap product. The first uses the pairing from Equation~\eqref{pairingone} to give maps
\begin{align*} 
	H^\textnormal{lf}_k(X;h^n)&\otimes H^l(X;h_n)\xrightarrow{\cap} H^\textnormal{lf}_{k-l}(X;R)\\
	H_k^{\textnormal{lf}}(X;h^n) &\otimes H^l_c(X;h_n) \xrightarrow{\cap} H_{k-l}(X;R),
\end{align*}
where $l \leq k$. These maps are both defined at the chain level by the formula (recall Notation~\ref{simplexnotation} and the assumption that $X$ is  locally finite and oriented)
\begin{equation}\label{cupversion1}
	\shchain \cap \shcochain=\sum_{\smallsimplex \in X_k} \langle h^*( \smallsimplex>\smallsimplex_{\geq k- l})\shchain_\smallsimplex, \shcochain_{\smallsimplex_{\geq k- l}} \rangle \smallsimplex_{\leq k- l},
\end{equation}
where $\shchain$ is a locally finite chain and $\shcochain$ a (compactly supported) cochain, with values $\chain_\smallsimplex$ and $\shcochain_\smallsimplex$ at $\smallsimplex$. On generators $\smallsimplex\under \bigsimplex^*\in h^*(\smallsimplex)$ where $\smallsimplex$ is a $k$-simplex and $\smallsimplexone \under\bigsimplexone\in h_*(\smallsimplexone)$ where $\smallsimplexone$ is an $l$-simplex this pairing gives 
\begin{equation}\label{cap1ongens}
	\smallsimplex\under \bigsimplex^*\cap \smallsimplexone \under\bigsimplexone =\begin{cases}
		\langle \bigsimplex^*,\bigsimplexone\rangle \smallsimplex_{\leq k-l} & \mbox{ if }\smallsimplex_{\geq k-l}= \smallsimplexone\\ 0 & \mbox{ otherwise.}
	\end{cases}
\end{equation}

The second version does not use the pairing $\langle \,,\,\rangle$ and gives for $l \leq k$ maps
\begin{align}\begin{split}
		H^\lf_k(X;h^n)&\otimes H^l(X;R)\xrightarrow{\cap} H^\lf_{k-l}(X;h^n)\\
		H_k^{\textnormal{lf}}(X;h^n) &\otimes H^l_c(X;R) \xrightarrow{\cap} H_{k-l}(X;h^n),\label{capnopairing}	
	\end{split}
\end{align}
defined by
\begin{equation}\label{cupversion2}
	\shchain \cap \intcochain= \sum_{\smallsimplex \in X_k}\intcochain(\smallsimplex_{\geq k-l})h^n( \smallsimplex> \smallsimplex_{\leq k-l})\shchain_\smallsimplex,	
\end{equation}
where $\shchain$ is a locally finite chain with coefficients in $h^*$ and $\intcochain$ a (compactly supported) cochain with $R$-coefficients. On elements $\smallsimplex\under \bigsimplex^*\in C^\bullet(X,X-\ostar \smallsimplex)$ used to define elements of $h^*(\smallsimplex)$ where $\smallsimplex\in X_k$ and $\smallsimplexone^*$ where $\smallsimplexone \in X_l$, this gives
\begin{equation}\label{cap2ongens}
	\smallsimplex \under \bigsimplex^* \cap \smallsimplexone^*=\begin{cases}
		\smallsimplex_{\leq k- l}\under\bigsimplex^* &\mbox{ if }\smallsimplex_{\geq k-l}=\smallsimplexone\\ 0& \mbox{ otherwise.}
	\end{cases} 
\end{equation}	

These cap products are chain maps in the sense that:
\begin{proposition}[Leibniz Rule]\label{capischainmap}
	Let $X$ be a locally finite oriented simplicial complex. The chain-level cap product
	$$
	C^\lf_k(X;h^*) \otimes C^l(X;h_*) \xrightarrow{\cap} C^\lf_{k-l}(X;R)	
	$$
	defined in Equation~\eqref{cupversion1} is a chain map with respect to the differential of the tensor product complex given by
	$$
	\partial_\textnormal{tot}(\shchain\otimes \cochain)= d\shchain \otimes\cochain + (-1)^{k-l}\shchain\otimes \partial \cochain.
	$$
	The other cap products in this section are also chain maps with a similarly defined differential on the respective tensor product complexes.
\end{proposition}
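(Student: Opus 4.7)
The plan is to verify the Leibniz rule by a direct chain-level computation. By $R$-bilinearity it suffices to check the identity on basis elements: take $\shchain = \smallsimplex \under \bigsimplex^*$ for a $k$-simplex $\smallsimplex \leq \bigsimplex$, and $\cochain$ supported on a single $l$-simplex $\smallsimplexone$ with value $\smallsimplexone \under \bigsimplexone \in h_n(\smallsimplexone)$. By Equation~\eqref{cap1ongens}, $\shchain \cap \cochain$ vanishes unless $\smallsimplex_{\geq k-l} = \smallsimplexone$, in which case it equals $\langle \bigsimplex^*, \bigsimplexone \rangle \smallsimplex_{\leq k-l}$, with simplicial boundary
\[
\partial(\shchain \cap \cochain) = \langle \bigsimplex^*, \bigsimplexone \rangle \sum_{i=0}^{k-l} (-1)^i (\smallsimplex_{\leq k-l})_{\langle i\rangle}.
\]

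Next I would expand $(d\shchain)\cap\cochain$ and $\shchain \cap \partial\cochain$ using the same cap formula. The chain $d\shchain$ takes value $(-1)^i\smallsimplex_{\langle i\rangle}\under\bigsimplex^*$ at each $(k-1)$-face $\smallsimplex_{\langle i\rangle}$ of $\smallsimplex$. For indices $i<k-l$, the back $l$-face of $\smallsimplex_{\langle i\rangle}$ remains $\smallsimplex_{\geq k-l}$ and the front $(k-l-1)$-face coincides with $(\smallsimplex_{\leq k-l})_{\langle i\rangle}$, so these contributions reproduce exactly the summands $i = 0,\ldots,k-l-1$ of $\partial(\shchain\cap\cochain)$. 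For $\partial\cochain$, only the $j=0$ face removal at the $(l+1)$-simplex $\smallsimplex_{\geq k-l-1}$ yields $\smallsimplexone$, contributing $\langle\bigsimplex^*,\bigsimplexone\rangle\,\smallsimplex_{\leq k-l-1}$ to $\shchain\cap\partial\cochain$; after the prefactor $(-1)^{k-l}$ this supplies the remaining $i=k-l$ summand.

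In the complementary case $\smallsimplex_{\geq k-l}\neq\smallsimplexone$, both sides should vanish, but spurious contributions to the right-hand side arise when $\smallsimplexone=\{\smallsimplex_{k-l-1},\ldots,\widehat{\smallsimplex_i},\ldots,\smallsimplex_k\}$ for some $i\geq k-l$. Such an $\smallsimplexone$ is simultaneously the back $l$-face of $\smallsimplex_{\langle i\rangle}$ (contributing to $(d\shchain)\cap\cochain$ with sign $(-1)^i$) and the $j$th face of $\smallsimplex_{\geq k-l-1}$ under the reindexing $j=i-(k-l-1)$ (contributing to $\shchain\cap\partial\cochain$ with sign $(-1)^j$). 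The prefactor $(-1)^{k-l}$ converts the latter sign to $(-1)^{i+1}$, producing cancellation term-by-term.

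The main obstacle is the combinatorial sign bookkeeping, a repackaging of the classical Leibniz identity for simplicial cap products. The sheaf and cosheaf structure adds no genuine complication, since the maps $h^*(\smallsimplex > \smallsimplex')$ and $h_*(\smallsimplex' < \smallsimplex)$ appearing in the differentials simply relabel the base simplex in expressions of the form $\smallsimplex\under\bigsimplex^*$, leaving the pairing $\langle\bigsimplex^*,\bigsimplexone\rangle$ untouched throughout the calculation. The argument for the second cap product (Equation~\eqref{capnopairing}) proceeds identically, with Equation~\eqref{cap2ongens} replacing Equation~\eqref{cap1ongens} and the pairing simply omitted.
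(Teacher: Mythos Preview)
Your proposal is correct and follows essentially the same approach as the paper's proof: a direct chain-level computation on generators $\smallsimplex\under\bigsimplex^*$ and $\smallsimplexone\under\bigsimplexone$, splitting the contributions from $d\shchain\cap\cochain$ according to whether $i<k-l$ or $i\geq k-l$ and matching them against $\partial(\shchain\cap\cochain)$ and the terms of $\shchain\cap\partial\cochain$. The paper organises the same calculation with Kronecker deltas $\delta_{\smallsimplex_{\geq k-l},\smallsimplexone}$ rather than an explicit case split on whether $\smallsimplex_{\geq k-l}=\smallsimplexone$, but the term-by-term cancellations are identical; in particular your reindexing $j=i-(k-l-1)$ and the resulting sign $(-1)^{i+1}$ is exactly the cancellation the paper records between the second sum in its expression for $d(\smallsimplex\under\bigsimplex^*)\cap\smallsimplexone\under\bigsimplexone$ and the $j\geq 1$ part of $\smallsimplex\under\bigsimplex^*\cap\partial(\smallsimplexone\under\bigsimplexone)$.
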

\begin{proof}
	Elements of $C^\lf_k(X;h^*)$ and $ C^l(X;h_*)$ can be represented by linear combinations of elements of the form $\smallsimplex\under \bigsimplex^*\in C^\bullet(X,X-\ostar \smallsimplex)$ and $\smallsimplexone\under \bigsimplexone\in  C_\bullet(X,X-\ostar \smallsimplexone)$, respectively. We will show that
	$$
	d(\smallsimplex\under \bigsimplex^*\cap \smallsimplexone \under\bigsimplexone)=d(\smallsimplex\under \bigsimplex^*)\cap \smallsimplexone \under \bigsimplexone+(-1)^{k-l}\smallsimplex\under \bigsimplex^*\cap \partial(\smallsimplexone \under\bigsimplexone).
	$$
	The left hand side gives (suppressing the (co)restriction maps from the notation)
	\begin{equation}\label{dofcap}
		d(\smallsimplex\under \bigsimplex^*\cap \smallsimplexone \under\bigsimplexone)= \delta_{\smallsimplex_{\geq k-l},\smallsimplexone} \langle \bigsimplex^*,\bigsimplexone\rangle \left(\sum_{i=0}^{k-l-1}(\smallsimplex_{\leq k- l})_{\langle i \rangle}+(-1)^{k-l}\smallsimplex_{\leq k-l-1}\right),
	\end{equation}
	where $\delta_{\smallsimplex_{\geq k-l},\smallsimplexone}$ is the Kronecker delta. Meanwhile
	\begin{align}\begin{split}\label{capwithd}
	d(\smallsimplex\under \bigsimplex^*)\cap \smallsimplexone \under \bigsimplexone=&\sum_{i=0}^{k-l-1}(-1)^i \delta_{\smallsimplex_{\geq k-l}, \smallsimplexone}\langle \bigsimplex^*,\bigsimplexone\rangle (\smallsimplex_{\langle i \rangle})_{\leq k-l-1} \\
		&+\sum_{i=k-l}^k(-1)^i \delta_{(\smallsimplex_{\langle i \rangle})_{\geq k-l-1}, \smallsimplexone}\langle \bigsimplex^*,\bigsimplexone\rangle \smallsimplex_{\leq k-l-1},
	\end{split}\end{align}
	and one sees that the first term of this cancels with the first term on the right hand side of Equation~\eqref{dofcap}. The final term is $(-1)^{k-l}$ times
	\begin{align*}
		\smallsimplex\under \bigsimplex^*\cap \partial\smallsimplexone \under\bigsimplexone=& \delta_{\smallsimplex_{\geq k-l}, \smallsimplexone} \langle \bigsimplex^*,\bigsimplexone\rangle\smallsimplex_{\leq k-l-1}\\
		&+\sum_{i=k-l}^k(-1)^{i-k+l+1} \delta_{(\smallsimplex_{\langle i \rangle})_{\geq k-l-1}, \smallsimplexone} \langle \bigsimplex^*,\bigsimplexone\rangle \smallsimplex_{\leq k-l-1}.
	\end{align*}
	We see that the first term of this cancels with the second term from Equation~\eqref{dofcap}, while the second term cancels with the second in Equation~\eqref{capwithd}.
	
	The proofs for the other cap products are analogous.
\end{proof}

In particular we have found that for $\shchain\in C^\lf_k(X;h^*)$ and $\cochain\in C^l(X;h_*)$
$$
d(\shchain\cap \cochain)=  d\shchain \cap \cochain + (-1)^{k-l}\shchain\cap \partial \cochain,
$$
and similar for the other versions of the cap product. From this it follows that:
\begin{corollary}
	The cap products defined in this section descend to (co)homology.
\end{corollary}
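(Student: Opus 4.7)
The plan is to derive the corollary as a direct formal consequence of Proposition~\ref{capischainmap}. Once we know that $\cap$ is a chain map from the tensor product complex into the target (co)chain complex, the usual homological algebra argument shows that cycle-cycle pairs map to cycles and that the result only depends on classes, so the induced map on (co)homology is well-defined.

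Concretely, I would argue as follows, taking the representative case of the first cap from \eqref{cupversion1}. Suppose $\shchain \in C^\lf_k(X;h^*)$ satisfies $d\shchain = 0$ and $\cochain \in C^l(X;h_*)$ satisfies $\partial \cochain = 0$. Then by the Leibniz rule,
\begin{equation*}
d(\shchain \cap \cochain) = d\shchain \cap \cochain + (-1)^{k-l}\shchain \cap \partial\cochain = 0,
\end{equation*}
so $\shchain \cap \cochain$ is a cycle in $C^\lf_{k-l}(X;R)$.

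Next I would check that $[\shchain \cap \cochain]$ only depends on $[\shchain]$ and $[\cochain]$. If $\shchain = d\shchain'$ with $\shchain' \in C^\lf_{k+1}(X;h^*)$ and $\partial \cochain = 0$, then the Leibniz rule applied to $\shchain' \otimes \cochain$ gives
\begin{equation*}
d(\shchain' \cap \cochain) = d\shchain' \cap \cochain = \shchain \cap \cochain,
\end{equation*}
so $\shchain \cap \cochain$ is a boundary. Symmetrically, if $d\shchain = 0$ and $\cochain = \partial\cochain''$ with $\cochain'' \in C^{l-1}(X;h_*)$, then
\begin{equation*}
d(\shchain \cap \cochain'') = (-1)^{k-l+1}\shchain \cap \partial\cochain'' = (-1)^{k-l+1}\shchain \cap \cochain,
\end{equation*}
so again $\shchain \cap \cochain$ is a boundary. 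This shows that the chain-level map descends to a well-defined bilinear pairing $H^\lf_k(X;h^*) \otimes H^l(X;h_*) \to H^\lf_{k-l}(X;R)$.

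The same argument works verbatim for each of the remaining three variants, since Proposition~\ref{capischainmap} asserts the Leibniz rule in each case; the only thing that changes is which (co)chain complex the two factors live in and which complex is compactly supported or locally finite. There is no genuine obstacle: this is a routine formal consequence, so the only thing to be careful about is the signs and degree conventions in the Leibniz identity, which I would verify once and then quote for all four variants.
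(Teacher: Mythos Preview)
Your argument is correct and is exactly the approach the paper takes: the corollary is stated as an immediate consequence of the Leibniz rule from Proposition~\ref{capischainmap}, and you have simply written out the standard verification (cycles go to cycles, boundaries to boundaries) that the paper leaves implicit.
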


\subsubsection{Orientation independence of the cap product}\label{orientationindepcapsection}
In defining the cap product we used the orientation of the simplicial complex $X$. Just like the (co)homology itself, the cap products at the level of (co)homology do not depend on this choice:

\begin{theorem}\label{orderderindependencetheorem}
	For a locally finite oriented simplicial complex the cap products from the previous section do not depend on the chosen orientation at the level of (co)homology.
\end{theorem}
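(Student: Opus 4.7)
The plan is to combine the sign isomorphisms from Lemma~\ref{homorderinv} with an acyclic-models or explicit chain homotopy argument.

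First, I would fix two orientations $o_1, o_2$ on $X$ and, for each simplex $\smallsimplex$, record the sign $\sg{\smallsimplex}$ of the permutation relating the two orderings. Lemma~\ref{homorderinv} says that multiplication by these signs gives a chain level isomorphism between the sheaf cohomology complexes for $o_1$ and $o_2$. The same proof, mutatis mutandis, gives analogous chain level isomorphisms for compactly supported cohomology, locally finite homology, and cosheaf homology; the sheaf structure on the local homology sheaf $h_*$ and the cosheaf structure on $h^*$ are intrinsically defined, so the sign twists only enter through the indexing simplex. Assembling these produces isomorphisms between the source tensor product complexes and between the target complexes of the cap products for the two orientations.

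Next, I would observe that the chain level cap products do \emph{not} intertwine these isomorphisms on the nose: the front face $\smallsimplex_{\leq k-l}$ and back face $\smallsimplex_{\geq k-l}$ appearing in \eqref{cupversion1} and \eqref{cupversion2} are genuinely different subsimplices of $\smallsimplex$ for different orderings of its vertices. Hence the orientation independence at the level of (co)homology must come from a chain homotopy rather than strict equality. To produce one, I would reduce to elementary orientation changes, namely transpositions of two adjacent vertices in the ordering of a single maximal simplex, propagated compatibly to faces, so that only finitely many cap product terms are affected at a time; then I would write down an explicit chain homotopy $h$ of Alexander--Whitney flavour and verify $dh+hd$ equals the chain level difference of the two cap products by a sign-tracking computation parallel to the proof of Proposition~\ref{capischainmap}.

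The main obstacle is producing the correct homotopy and carrying out the sign bookkeeping for all four cap product variants. A cleaner alternative I would pursue in parallel is an acyclic-models style argument: after applying the sign isomorphisms above, both orientation-dependent cap products become chain maps between the \emph{same} pair of complexes that agree in the minimal non-vanishing bidegree (reducing to the pairing $\langle\cdot,\cdot\rangle$, or to the identity on $h^n$ in the coefficient-free case). Provided the source complex is sufficiently acyclic in higher bidegrees, which is plausible since it is built from simplicial chain complexes indexed by the simplices of $X$, any two such chain maps are chain homotopic, handling the four variants uniformly without writing an explicit formula.
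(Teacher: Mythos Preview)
Your main plan is exactly the paper's approach: reduce to a single transposition of two vertices and produce an explicit Alexander--Whitney style chain homotopy witnessing that the two cap products agree up to homotopy after conjugating by the sign isomorphisms of Lemma~\ref{homorderinv}. The paper packages the transposition step as Proposition~\ref{orindepprop} (swapping $u$ and $w$ spanning a $1$-simplex, with a further reduction to the case where they are consecutive in a total order) and writes out the homotopy in Appendix~\ref{orindepapp}; your acyclic-models alternative is not pursued there.
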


The proof of this relies on the following proposition, the proof of which will be given in Appendix~\ref{orindepapp}.

\begin{proposition}\label{orindepprop}
		Let $X$ be an oriented simplicial complex and $[u,w]$ be an oriented 1-simplex. Write $\tilde{C}_\bullet(X)$ for the chain complex associated to the orientation of $X$ in which $u$ and $w$ are swapped and $\tilde{\cap}$ for the associated cap product. Then the diagram involving the cap from Equation~\eqref{cupversion2}
	\begin{center}
		\begin{tikzcd}
			C^\lf_k(X;h^*)\otimes C^l(X) \arrow[r,"\cap"] \arrow[d,"\cong"] & C^\lf_{k-l}(X;h^*)\arrow[d, "\cong"]\\
			\tilde{C}^\lf_k(X;h^*)\otimes\tilde{C}^l(X) \arrow[r,"\tilde{\cap}"]& \tilde{C}^\lf_{k-l}(X;h^*)
		\end{tikzcd}
	\end{center}
	(where the vertical maps are the isomorphisms from Lemma~\ref{homorderinv}) commutes up to chain homotopy. The corresponding diagram for the cap product from Equation~\eqref{cupversion1} also commutes. The analogous statements for the cap product involving compactly supported cohomology hold.
\end{proposition}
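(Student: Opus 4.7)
The plan is to build an explicit chain homotopy and reduce the argument to a local computation, simplex by simplex. First I would observe that the isomorphism $\phi$ from Lemma~\ref{homorderinv} associated to the swap of $u$ and $w$ acts as $-1$ on the $\smallsimplex$-summand precisely when $\smallsimplex$ contains both $u$ and $w$ (the swap is a transposition on its vertex ordering) and as the identity otherwise. Consequently the diagram commutes on the nose on summands indexed by simplices not containing the edge $[u,w]$, and the whole defect is concentrated on those that do.

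For such a simplex $\smallsimplex$, with $u$ at position $i$ and $w$ at position $j > i$ in the original ordering, I would compare the front/back face decomposition appearing in the cap formulas \eqref{cupversion2} and \eqref{cupversion1} under the swap. When $k-l < i$ or $k-l \geq j$ both $u$ and $w$ lie on the same side of the split, the underlying front and back simplices are unchanged as sets, and the sign contributions from $\phi$ on $\smallsimplex$, on the inner basis $\bigsimplex^*$ (or $\bigsimplexone$), and on the faces $\smallsimplex_{\leq k-l}$ and $\smallsimplex_{\geq k-l}$ conspire to cancel. The interesting regime is $i \leq k-l < j$, where the swap interchanges $u$ and $w$ between the front and back face: the front face acquires $w$ in place of $u$ and the back face acquires $u$ in place of $w$, genuinely altering the cap product.

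The main step is to construct a chain homotopy $h$ whose contribution at each $\smallsimplex$ as above is indexed by the intermediate split positions $p$ with $i \leq p < j$. Schematically, $h(\shchain \otimes \intcochain)$ is a signed sum of cap-like expressions that split $\smallsimplex$ at each such $p$. Applying $dh + hd$ then telescopes across these intermediate splits, and the only surviving contributions come from the endpoints $p = i$ and $p = j-1$, which encode precisely the defect between the two compositions in the diagram. The construction preserves local finiteness on the chain side and compact support on the cochain side, and the same $h$ handles the cap product of \eqref{cupversion1} after inserting the local homology/cohomology pairing from \eqref{pairingone}.

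The main obstacle will be sign bookkeeping: besides the $-1$ from $\phi$ on simplices containing $\{u,w\}$, the sheaf and cosheaf differentials \eqref{sheafboundary} and \eqref{coshcohdiff} contribute their own alternating signs, and these interact with the signs from the faces $\smallsimplex_{\langle q \rangle}$ as $q$ ranges past $i$, $p$ and $j$. Making the telescoping come out right requires a careful sign convention in $h$, modelled on the prism construction used to compare different diagonal approximations in the Alexander--Whitney framework. Once the local formula is verified for a single $\smallsimplex$, the global statement follows by summing over simplices, and the analogous statements for compactly supported cohomology in the second part of the proposition go through by the same formula for $h$ since it preserves finite support.
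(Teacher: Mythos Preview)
Your approach is in the right spirit—an explicit chain homotopy built simplex by simplex—and in the consecutive case it coincides with the paper's. However, there is a gap in your first step. Your claim that the isomorphism $\phi$ from Lemma~\ref{homorderinv} acts as the identity on simplices not containing both $u$ and $w$ is \emph{not} true in general: if some simplex contains a vertex $a$ with $u < a < w$, then swapping $u$ and $w$ in the ordering also reverses the order on the faces $\{u,a\}$ and $\{a,w\}$, so $\phi$ is nontrivial there. Put differently, the naive ``swap $u\leftrightarrow w$ only inside simplices containing both'' does not produce a valid orientation unless $u$ and $w$ are consecutive, and under the correct (total-order) swap the defect is not concentrated only on simplices containing the edge $[u,w]$.

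The paper deals with exactly this issue by first reducing to the case where $u$ and $w$ are consecutive: one restricts to the finite subcomplex of simplices meeting $\{u,w\}$, chooses a compatible total order on its vertices, and factors the swap of $u$ and $w$ as a product of transpositions of \emph{consecutive} vertices. After this reduction your claim about $\phi$ is valid, and necessarily $j=i+1$; your telescoping homotopy over split positions $i\le p<j$ then collapses to a single term, which is precisely the paper's explicit homotopy $B$ (nonzero only when $\smallsimplex_{k-l}=u$ and $\smallsimplex_{k-l+1}=w$). If instead you meant the multi-term telescoping to handle non-consecutive $u,w$ directly, you would also need to track the defect on simplices containing only one of $u,w$, which your outline does not address; the reduction to the consecutive case is the cleaner route, and is the step you are missing.
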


\begin{remark}
	In Appendix~\ref{orindepapp} we also prove the corresponding statements for the cap product for (co)homology with $R$-coefficients.
\end{remark}

\begin{proof}[Proof of Theorem~\ref{orderderindependencetheorem}]
	The cap product on a generator $\smallsimplex\otimes\smallsimplexone^*$ only depends on the orientation of the simplices of $\smallsimplex$ and $\smallsimplexone$, which span a finite subcomplex. A change of orientation of $X$ restricted to this finite subcomplex is given by a product of transpositions. By Proposition~\ref{orindepprop} the cap products in (co)homology are invariant under changing the orientation by a transposition, so we are done.
\end{proof}

\begin{remark}
	In the literature (e.g. \cite[Page 239]{Hatcher2002}) one finds the cap product
	$$
	\smallsimplex\cap \smallsimplexone^*= \smallsimplexone^*(\smallsimplex_{\leq l})\smallsimplex_{\geq l},
	$$
	given by evaluating the cochain on the front face, rather than the back face of $\smallsimplex$. This is not a chain map for a reasonable choice of signs in the chain complexes involved. It is related to our cap product by reversing the order of the vertices in each of the simplices of $X$. The corresponding permutation for  an $l$-simplex has sign $(-1)^{\frac{1}{2}(l-2)(l-1)}$, which depends on $l$ mod 4.
\end{remark}

\subsubsection{Relative caps}\label{relativecapssection} We will also need versions of the cap product relative to subcomplexes. Because the cap product makes reference to the front and back faces of simplices with respect to the orientation of the simplicial complex $X$, some care needs to be taken in how one orients the subcomplexes:

\begin{definition}\label{vcandbeforedef}
	Let $L\subset X$ be a subcomplex of an oriented simplicial complex $X$. The \emph{vertex-complement $L^{\textnormal{vc}}$ of $L$} is the full subcomplex spanned by the vertices of $X$ not in $L$. We will say that \emph{$L\vc$ is before $L$} if $X$ is oriented so that in each simplex the vertices of $L\vc$ come before those of $L$.
\end{definition}  

We record some obvious features of orienting $L\vc$ before $L$ to facilitate cap product computations:
\begin{lemma}\label{basicorderingconsequences}
	Let $L\subset X$ and assume that $L\vc$ is before $L$. Let $\smallsimplex\in X_k$. Then, for $0\leq j \leq k$,
	\begin{enumerate}[label=(\arabic*), ref=\arabic*]
		\item if $\smallsimplex_{\leq j} \in L$, then $\smallsimplex \in L$;
		\item if $\smallsimplex_{\leq j}\notin L\vc$, then $\smallsimplex_{\geq j}\in L$;
		\item if $\smallsimplex_{\geq j} \in L\vc$, then $\smallsimplex \in L\vc$;
		\item if $\smallsimplex_{\geq j} \notin L$, then $\smallsimplex_{\leq j}\in L\vc$.
	\end{enumerate}
\end{lemma}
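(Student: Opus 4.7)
The plan is to derive all four statements from one basic observation about the orientation: within each simplex $\sigma=[\sigma_0,\ldots,\sigma_k]$, the hypothesis that $L\vc$ is before $L$ means that the indices $i$ for which $\sigma_i\in L\vc$ form an initial segment $\{0,1,\ldots,m\}$ and the indices with $\sigma_i\in L$ form the complementary final segment. Equivalently, $\sigma_i\in L$ implies $\sigma_j\in L$ for all $j\geq i$, and dually $\sigma_i\in L\vc$ implies $\sigma_j\in L\vc$ for all $j\leq i$. Combined with the standing assumption that subcomplexes are full (so that membership of a simplex is equivalent to membership of all its vertices), each item is then immediate bookkeeping.

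Concretely, first I would write out the above \emph{monotonicity observation} as a single sentence, since it is doing all the work. Then I would dispatch the four items in order. For (1), $\sigma_{\leq j}\in L$ forces $\sigma_0\in L$, so by monotonicity every vertex of $\sigma$ lies in $L$ and fullness gives $\sigma\in L$. For (2), $\sigma_{\leq j}\notin L\vc$ means some $\sigma_i$ with $i\leq j$ lies in $L$; monotonicity propagates this to $\sigma_j,\ldots,\sigma_k\in L$, and fullness yields $\sigma_{\geq j}\in L$. For (3), $\sigma_{\geq j}\in L\vc$ gives $\sigma_j\in L\vc$, so by the dual monotonicity $\sigma_0,\ldots,\sigma_{j-1}\in L\vc$ as well; together with $\sigma_{\geq j}\subset L\vc$ all vertices of $\sigma$ lie in $L\vc$, so $\sigma\in L\vc$. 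For (4), $\sigma_{\geq j}\notin L$ produces some $\sigma_i$ with $i\geq j$ in $L\vc$, and then $\sigma_0,\ldots,\sigma_j\in L\vc$ by the dual monotonicity, giving $\sigma_{\leq j}\in L\vc$.

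There is no real obstacle here: once the monotonicity principle is isolated, each statement is a one-line deduction. The only thing to be a little careful about is that $L$ and $L\vc$ have disjoint vertex sets whose union is $X_0$, so ``$\sigma_i\notin L\vc$'' is literally the same as ``$\sigma_i\in L$'' at the vertex level, which is what allows (2) and (4) to convert a non-membership hypothesis into a positive vertex statement before applying monotonicity and fullness.
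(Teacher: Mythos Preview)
Your argument is correct and is precisely the intended one: the paper records this lemma as ``obvious features'' of the ordering convention and omits any proof, so your monotonicity observation together with fullness of $L$ and $L\vc$ is exactly what is being left implicit. There is nothing to add.
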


With this orientation, we have the relative cap products:
\begin{proposition}\label{usefulrelativecups}
	Let $L\subset X$ be a subcomplex, and let the orientation of $X$ be so that $L\vc$ is before $L$. Then Equations \eqref{cupversion1} and \eqref{cupversion2} induce chain level relative caps
	\begin{align}
		C^\lf_k(X; h^n)\otimes C^{l}(X,L; h_n)& \xrightarrow{\cap} C^\lf_{k-l}(L\vc; R),\label{relativecup1}\\
		C^\lf_k(X; h^n)\otimes C^{l}(L; h_n) &\xrightarrow{\cap} C^\lf_{k-l}(X,L\vc; R)\label{relativecup2}\\
		C^\lf_k(X; h^n)\otimes C^{l}(X,L; R)& \xrightarrow{\cap} C^\lf_{k-l}(L\vc; h^n)\label{relativecap3}\\
		C^\lf_k(X; h^n)\otimes C^{l}(L; R) &\xrightarrow{\cap} C^\lf_{k-l}(X,L\vc; h^n),\label{relativecap4}
	\end{align} 
	which descend to (co)homology. These restrict on compactly supported cochains to maps that take values in finite chains.
\end{proposition}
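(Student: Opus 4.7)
The plan is to deduce all four relative caps from the absolute caps of Section~\ref{simplicialcapsection} by exploiting how the convention ``$L\vc$ before $L$'' interacts with the front and back faces appearing in Equations~\eqref{cupversion1} and~\eqref{cupversion2}, and then to inherit the chain map property from Proposition~\ref{capischainmap}.

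For~\eqref{relativecup1} and~\eqref{relativecap3}, I first note that $C^l(X,L;h_n)$ and $C^l(X,L;R)$ are subcomplexes of $C^l(X;h_n)$ and $C^l(X;R)$, namely the cochains vanishing on $l$-simplices of $L$. Given $\shchain \in C^\lf_k(X;h^n)$ and such a cochain $\shcochain$, the $\smallsimplex$-summand of the absolute cap is nonzero only when the cochain is nontrivial on $\smallsimplex_{\geq k-l}$, which forces $\smallsimplex_{\geq k-l}\notin L$. Lemma~\ref{basicorderingconsequences}(4) then yields $\smallsimplex_{\leq k-l}\in L\vc$, so the cap automatically takes values in $C^\lf_{k-l}(L\vc;R)$ or $C^\lf_{k-l}(L\vc;h^n)$, respectively. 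Since the subcomplex inclusions and the absolute cap are chain maps, these relative caps are chain maps and descend to (co)homology.

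For~\eqref{relativecup2} and~\eqref{relativecap4}, I will use that the product formula $C^l(X;\F)=\prod_{\smallsimplex\in X_l}\F(\smallsimplex)$ provides a short exact sequence $0\to C^l(X,L;\F)\to C^l(X;\F)\to C^l(L;\F|_L)\to 0$, where the differential on $C^l(L;\F|_L)$ coincides with the induced quotient differential since every face of an $L$-simplex lies in $L$. Post-composing the absolute cap with the projection $C^\lf_{k-l}(X;-)\twoheadrightarrow C^\lf_{k-l}(X,L\vc;-)$ kills $C^l(X,L;-)$ by the previous paragraph, so it factors through $C^l(L;-|_L)$ to give the required chain maps. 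Finally, the compactly supported statement follows from local finiteness of $X$: if $\shcochain$ has finite support then only finitely many $l$-simplices $\smallsimplexone$ carry nonzero values, and for each such $\smallsimplexone$ the set $\{\smallsimplex\in X_k\mid\smallsimplex_{\geq k-l}=\smallsimplexone\}\subset\st\smallsimplexone$ is finite. The main obstacle is purely organisational, namely managing four parallel cases and carefully verifying the subcomplex versus quotient identifications; beyond Lemma~\ref{basicorderingconsequences} and Proposition~\ref{capischainmap} no new input is required.
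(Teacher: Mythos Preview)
Your proposal is correct and follows essentially the same approach as the paper. The only cosmetic difference is that you invoke Lemma~\ref{basicorderingconsequences}(4) where the paper uses its contrapositive Lemma~\ref{basicorderingconsequences}(2), and you treat all four caps explicitly whereas the paper handles \eqref{relativecup1} and \eqref{relativecup2} and declares the other two ``similar''; the short exact sequence argument for \eqref{relativecup2} and \eqref{relativecap4} is identical in both.
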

\begin{proof}
	We only discuss the first two maps, the second two are similar. 
	We start by giving the relative cap from Equation~\eqref{relativecup1}. We want to show that, given a chain $\shchain\in C_k(X;h^*)$, capping with $\shchain$ takes $C^l(X,L;h_n)$ to $C_{k-l}(L\vc;R)$. For any cochain $\cochain \in C^{l}(X,L;h_n)$ and $k-l$-simplex $\smallsimplexone \notin L\vc_{k-l}$, let $(\shchain\cap \cochain)_\smallsimplexone$ be the $\smallsimplexone$-component of the cap product . This component has a contribution from each $k$-simplex $\smallsimplex$ for which $\smallsimplexone=\smallsimplex_{\leq k-l}$. As $L\vc$ is before $L$ and $\smallsimplexone=\smallsimplex_{\leq k-l} \not \in L\vc$, this implies that $\smallsimplex_{\geq k- l} \in L_l$ (Lemma~\ref{basicorderingconsequences}(2)). Hence $\cochain(\smallsimplex_{\geq k- l})=0$ and thus $(\shchain\cap \cochain)_\smallsimplexone=0$. As this holds for all $\tau \not \in L\vc$, this shows that $\shchain\cap \cochain\in C_{k-l}(L\vc;R)$.

	For Equation~\eqref{relativecup2}, recall that both
	\begin{align}\begin{split}\label{usefulses}
			0\rightarrow C^l(X,L;h_n|_L)\rightarrow C^l(X; h_n|_L)\rightarrow C^l(L; h_n|_L)\rightarrow 0	& \mbox{ and}\\
			0\rightarrow C_{k-l}(L\vc;R)\rightarrow C_{k-l}(X;R)\rightarrow C_{k-l}(X,L\vc;\Z) \rightarrow 0&
		\end{split}
	\end{align}
	are short exact sequences of chain complexes. The argument for Equation~\eqref{relativecup1} shows that capping takes the kernel of the first sequence to the kernel of the second, so the cap product descends to the quotients as
	$$
	C_k(X; h^*)\otimes C^{l}(L;h_n) \xrightarrow{\cap} C_{k-l}(X,L\vc; R).
	$$
	Using Proposition~\ref{capischainmap} we see that this descends to (co)homology.
\end{proof}

\subsubsection{The cap product with coefficients}\label{s:capwithcoefs}
For a cosheaf $\G$ using $\mathcal{G}$-transport (Definition~\ref{d:gtransport}) gives caps
\begin{align*}
	C^l(L;h^\mathcal{G}_n|_L)\otimes C^\lf_k(X; h^{n})& \xrightarrow{\cap_\mathcal{G}} C_{k-l}(X,L\vc;\mathcal{G})	\\
	C^l(X,L\vc;h^\mathcal{G}_n)\otimes C^\lf_k(X; h^{n}) &\xrightarrow{\cap_\mathcal{G}} C_{k-l}(L;\mathcal{G}).
\end{align*}
given by (applying $\mathcal{G}$-transport to Equation~\eqref{cap1ongens}):
$$
\smallsimplexone\under\psi_\bigsimplexone \cap_\mathcal{G} \smallsimplex\under \bigsimplex^* = \begin{cases}
	\mathcal{G}(\bigsimplex > \smallsimplex_{\leq k-l})\psi_\bigsimplex &\mbox{ for }\bigsimplex=\bigsimplexone \mbox{ and }\smallsimplexone=\smallsimplex_{\geq k-l}\\ 0 & \mbox{ otherwise.}
\end{cases}
$$
Similarly, we have for a sheaf $\mathcal{F}$  a version of the cap product from Equation~\eqref{cupversion2}
$$
C^\lf_k(X;h^*)\otimes C^l(X;\mathcal{F})\xrightarrow{\cap_\mathcal{F}} C^\lf_{k-l}(X;h_\mathcal{F}^*),
$$
with obvious relative and compactly supported versions. The formula for this cap product is given by the $\mathcal{F}$-transport of Equation~\eqref{cap2ongens}:
$$
\smallsimplex\under\bigsimplex^* \cap \smallsimplexone \under \phi^\smallsimplexone= \begin{cases} \smallsimplex_{\leq k-l}\under\mathcal{F}(\smallsimplex_{\geq k-l}<\bigsimplex)\phi^{\smallsimplex_{\geq k-l}}  &\mbox{ for }\smallsimplex_{\geq k-l}=\smallsimplexone\\ 0&\mbox{ otherwise.}
\end{cases}
$$

\section{Overview of Cohen--Macaulay Complexes}\label{CMsection}
We will discuss the basics of Cohen--Macaulay complexes in Section~\ref{CMcondsect}, and then proceed to give a characterisation of the zeroth homology of the local cohomology cosheaf of a CM complex in terms of homomorphisms on sections of the local homology sheaf in Section~\ref{cmpropssection}. 

\subsection{Cohen--Macaulay complexes}\label{CMcondsect}
We discuss the Cohen--Macaulay condition and some basic consequences in \textsection\ref{cmcondsect}, give examples of CM complexes in \textsection\ref{cmexsect}, and discuss a characterisation of the CM condition in terms of links in \textsection\ref{linkcharsect}.

\subsubsection{The (homological) Cohen--Macaulay condition}\label{cmcondsect}
CM complexes are defined through a condition on the local homology at simplices:

\begin{definition}[Cohen--Macaulay complex]
A simplicial complex $X$ is \emph{Cohen--Macaulay of dimension $n$} over a ring $R$ if the following two conditions hold:

\begin{enumerate}[label=(\arabic*), ref=\arabic*]
	\item For each nontrivial simplex $\sigma$ in $X$, the local homology $h_*(\smallsimplex)$ over $R$ is concentrated in degree $n$.
	\item The reduced homology $\overline{H}_*(X)$ is concentrated in degree $n$.
\end{enumerate}
We say that $X$ is \emph{locally Cohen--Macaulay} (or \emph{locally CM} for short), if $X$ satisfies condition (1). If $L$ is a subcomplex of $X$, we say that $X$ is \emph{locally CM of dimension $n$ at $L$} if $h_*|_L$ is concentrated in degree $n$.

Similarly, we say that a simplicial complex $X$ is \emph{locally CM of dimension $n$ over the cosheaf $\mathcal{G}$ at a subcomplex $L$} if $h_*^\mathcal{G}(\smallsimplex)$ is concentrated in degree $n$ for all $\smallsimplex\in L$. Analogously, $X$ is \emph{locally CM of dimension $n$ over the sheaf $\mathcal{F}$ at a subcomplex $L$} if $h^*_\mathcal{F}(\smallsimplex)$ is concentrated in degree $n$ for all $\smallsimplex\in L$.
\end{definition}

\begin{remark}
	In many papers these two conditions are combined into one by noting that the open star $\ostar(\emptyset)$ of the empty simplex $\emptyset$ is the whole space, so that $\mathcal{H}(\emptyset)=\overline{H}_*(X)$. We separate the two statements as we consider (1) to be a \emph{local} condition on $X$, whereas (2) is a global condition.
\end{remark}

A finite dimensional simplicial complex is called \emph{pure} if all maximal simplices are of the same dimension. 
\begin{lemma}\label{pureremark}
	Any simplicial complex $X$ that is locally CM of dimension $n$ is pure of dimension $n$.
\end{lemma}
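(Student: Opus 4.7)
The plan is to show that any maximal simplex $\smallsimplex$ of $X$ must have dimension exactly $n$. The key calculation is that for a maximal simplex $\smallsimplex$ of dimension $l$, the relative chain complex $C_\bullet(X,X-\ostar\smallsimplex)$ is very simple: since $\smallsimplex$ is maximal, the only simplex $\bigsimplex$ with $\smallsimplex \leq \bigsimplex$ is $\smallsimplex$ itself, so following the discussion in \textsection\ref{lochomcompsection} this chain complex is the free rank-one $R$-module generated by $\smallsimplex\under\smallsimplex$, concentrated in degree $l$. Consequently $h_l(\smallsimplex)\cong R$ and $h_k(\smallsimplex)=0$ for $k\ne l$.

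First I would fix a maximal non-empty simplex $\smallsimplex$ of dimension $l$ and perform the computation above to conclude that $h_l(\smallsimplex)\cong R$ is nonzero. Then I would invoke the locally CM hypothesis: $h_*(\smallsimplex)$ is concentrated in degree $n$, so in particular $h_k(\smallsimplex)=0$ whenever $k\ne n$. Combining this with $h_l(\smallsimplex)\ne 0$ forces $l=n$. This applies to every maximal simplex and hence shows $X$ is pure of dimension $n$.

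The only mildly subtle point is ensuring every simplex of $X$ is contained in some maximal simplex, which is where local finiteness enters: a strictly ascending chain $\smallsimplex_0 < \smallsimplex_1 < \cdots$ would force infinitely many simplices in the star of $\smallsimplex_0$, contradicting local finiteness. Once this is noted, the conclusion that every non-empty simplex has dimension at most $n$ (hence $X$ itself is $n$-dimensional) follows immediately.

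The main (only) obstacle is the free rank-one computation of $h_l(\smallsimplex)$ for a maximal $\smallsimplex$, which is a straightforward unwinding of the definitions; no homological input beyond Lemma~\ref{alwayszerolochom}-style observations is needed.
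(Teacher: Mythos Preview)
Your proof is correct and follows essentially the same approach as the paper: compute $h_l(\smallsimplex)\cong R$ for a maximal $l$-simplex $\smallsimplex$ and conclude $l=n$ from the locally CM hypothesis. Your version is slightly more detailed, in particular explicitly noting that local finiteness guarantees every simplex lies in a maximal one (so that $X$ is indeed $n$-dimensional), a point the paper leaves implicit.
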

\begin{proof}
	If $\sigma$ is a maximal simplex of dimension $k$ in a simplicial complex then \[h_k(\sigma)= H_k(X, X-\ostar(\sigma))=R,\] so every maximal simplex is $n$-dimensional. 
\end{proof}

\begin{remark}
	Lemma~\ref{pureremark} means we do not have to distinguish between ``locally CM of dimension $n$'' and ``$n$-dimensional and locally CM'' for a simplicial complex $X$. However, the condition that $X$ be locally CM of dimension $n$ at a subcomplex $L$ does not imply that $X$ is $n$-dimensional, and in what follows we will often need to separately assume that $X$ is $n$-dimensional.
\end{remark}

\begin{remark}
	The local homology at a simplex $\sigma$ is the same as the local homology of $X$ at a point $p$ in the interior of $\sigma$. It follows that the CM property is independent of a given triangulation of $X$ (see \cite{Munkres1984} for more details). However, it is generally badly behaved under homotopy.
\end{remark}
\begin{remark}
	Bredon's book on sheaf theory \cite[Chapter II.9]{Bredon2012} has the notion of a \emph{weak homology manifold}. This is a version of the locally CM condition that applies to a larger family of locally compact spaces.
\end{remark}

\begin{remark}
	Quillen \cite{Quillen1978} introduced a stronger version of the CM condition that requires links of simplices to be homotopy equivalent to wedges of spheres. We will refer to this condition as \emph{homotopy} CM. Any homotopy CM complex is locally CM in our sense, see Lemma~\ref{lem:munkres} below.
\end{remark}

\subsubsection{Examples}\label{cmexsect}
CM complexes are ubiquitous.
\begin{itemize}
	\item Any triangulated manifold is locally CM.
	\item Any pure 1-dimensional simplicial complex is locally CM, these are just graphs in which every vertex has an adjacent edge. It is CM if it is additionally connected.
	\item A pure 2-dimensional simplicial complex is CM if and only if the link of every vertex is connected. 
	\item Any shellable complex is homotopy CM \cite{Bjoerner1984}.
	\item Spherical buildings are homotopy CM \cite{Bjoerner1984}. Many other coset complexes are also homotopy CM \cite{Quillen1978,Brueck2020}.
	\item The spine of Culler-Vogtmann Outer space is CM \cite{Vogtmann1990}.
	\item The Salvetti complex of a right-angled Artin group is locally CM if and only if the RAAG is a duality group \cite{Brady2001}.
\end{itemize}
For surveys on CM complexes from the point of view of combinatorial topology see \cite{Bjoerner2016, Hochster}.

\subsubsection{Characterisation in terms of links}\label{linkcharsect}
Other texts use a characterisation of CM in terms of the homology of the links of simplices, related to local homology via:

\begin{lemma}[{\cite[Lemma 3.3]{Munkres1984}}]\label{lem:munkres}
	Let $\smallsimplex \in X_l$ be an $l$-simplex of a simplicial complex $X$. Then
	$$
	h_i(\smallsimplex)\cong \tilde{H}_{i-l-1}(\lk \smallsimplex).
	$$
\end{lemma}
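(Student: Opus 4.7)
The plan is to give a direct chain-level isomorphism between the relative chain complex $C_*(X, X - \ostar \smallsimplex)$ that computes $h_*(\smallsimplex)$ and a degree-shift by $l+1$ of the augmented simplicial chain complex of the link.

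First I would recall from Section~\ref{lochomcompsection} that $C_k(X, X - \ostar \smallsimplex)$ is freely generated over $R$ by the $k$-simplices of $X$ containing $\smallsimplex$. The key combinatorial observation is that such simplices biject with $\lk(\smallsimplex) \cup \{\emptyset\}$: every $\bigsimplex \geq \smallsimplex$ decomposes uniquely as $\bigsimplex = \smallsimplex \star \smallsimplexone$ where $\smallsimplexone$ consists of the vertices of $\bigsimplex$ outside $\smallsimplex$, and by the definition of the link either $\smallsimplexone \in \lk(\smallsimplex)$ or $\smallsimplexone = \emptyset$ (the latter giving $\bigsimplex = \smallsimplex$ itself). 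Since $\dim(\smallsimplex \star \smallsimplexone) = l + 1 + \dim \smallsimplexone$ with the convention $\dim \emptyset = -1$, this gives a module isomorphism
\[
\Phi_k \co C_k(X, X - \ostar \smallsimplex) \xrightarrow{\cong} \tilde C_{k-l-1}(\lk \smallsimplex),
\]
where $\tilde C_{-1}(\lk \smallsimplex) = R$ is the augmentation and corresponds to the generator $\smallsimplex$ itself.

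Next I would check that $\Phi_\bullet$ intertwines the differentials up to a global sign. Orient $X$ so that within every $\smallsimplex \star \smallsimplexone$ the vertices of $\smallsimplex$ come first. Then the simplicial boundary $\partial(\smallsimplex \star \smallsimplexone)$ splits into two types of faces: those removing a vertex of $\smallsimplex$, which no longer contain $\smallsimplex$ and hence vanish in the quotient $C_*(X, X - \ostar \smallsimplex)$, and those removing the $j$-th vertex of $\smallsimplexone$, which equal $(-1)^{l+1+j}\, \smallsimplex \star \smallsimplexone_{\langle j \rangle}$. Factoring out $(-1)^{l+1}$ leaves exactly the augmented simplicial boundary of $\smallsimplexone$ in $\lk(\smallsimplex)$. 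Since a uniform sign does not affect homology, passing to homology yields $h_i(\smallsimplex) \cong \tilde H_{i-l-1}(\lk \smallsimplex)$.

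The only real obstacle is the sign bookkeeping in this boundary calculation; once the bijection is set up, everything else follows from the definitions. An alternative topological route would use excision to reduce to the pair $(\st \smallsimplex, \st \smallsimplex - \ostar \smallsimplex)$, identify these with $\smallsimplex * \lk \smallsimplex$ and $\partial \smallsimplex * \lk \smallsimplex$, then apply the join-suspension identity and contractibility of $\smallsimplex * \lk \smallsimplex$; but the chain-level route above sits more naturally within the simplicial framework already developed in the paper.
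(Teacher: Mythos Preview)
Your chain-level argument is correct. The bijection between simplices $\bigsimplex \geq \smallsimplex$ and simplices of $\lk\smallsimplex$ (together with $\emptyset$) via $\bigsimplex = \smallsimplex \star \smallsimplexone$ is exactly right, the degree shift is $l+1$ as you say, and the boundary computation with the ``$\smallsimplex$-vertices first'' ordering goes through. One small point: you only need this ordering on simplices that contain $\smallsimplex$, and since these all lie in $\st\smallsimplex$ you can simply take a total order on the vertices of $\st\smallsimplex$ with the vertices of $\smallsimplex$ first; the homology is independent of the choice by Lemma~\ref{homorderinv}.

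The paper does not give its own proof of this lemma---it is quoted directly from Munkres \cite[Lemma~3.3]{Munkres1984} and stated without argument. Munkres' original proof is essentially the topological route you sketch at the end: excise to $(\st\smallsimplex,\lk\smallsimplex)$, use that $\st\smallsimplex$ is the join $\smallsimplex * \lk\smallsimplex$ (hence a cone, hence contractible), and read off the shift from the long exact sequence together with the join/suspension formula. Your chain-level version is a bit more explicit and, as you note, meshes better with the combinatorial setup of Section~\ref{lochomcompsection}; the topological version has the advantage of making the isomorphism manifestly independent of orientation choices from the outset.
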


\begin{remark}
	It is often easier to compute the homology of links than the relative homology at stars (for example Salvetti complexes for RAAGs have links that can be described in terms of their defining graph). However, this has the drawback that there is no direct map $\tilde{H}_*(\lk \smallsimplex)\rar \tilde{H}_{*}(\lk \smallsimplexone)$ for $\smallsimplex<\smallsimplexone$, obscuring the restriction maps of the sheaf $h_*$.
\end{remark}

\subsection{Properties of locally CM complexes}\label{cmpropssection}
When an $n$-dimensional complex $X$ is locally CM at $L$ over a PID $R$, the Universal Coefficient Theorem tells us that for each simplex $\smallsimplex\in L$
\begin{equation}
h^*(\smallsimplex)\cong\hom(h_*(\smallsimplex),R).
\end{equation}
If $R$ is not a PID this statement still holds -- we prove this in \textsection\ref{uctcmsection}. There, we also show that one can further drop the assumption that $X$ is $n$-dimensional.

When $R$ is hereditary, this allows us to give interpretations of the degree zero homology of $h^*$ in terms of homomorphisms on the sections (see \textsection\ref{sectionsection}) of $h_*$. For locally finite homology this is fairly straightforward: we discuss this in \textsection\ref{locfindualsect}. Characterising the non-locally finite homology of $h^*$ is more involved, and is done in terms of \emph{compactly determined homomorphisms} on sections of $h_*$. This characterisation requires a semistability condition on the sections of $h_*$: we discuss compactly determined homomorphisms and this semistability condition for an arbitrary sheaf in \textsection\ref{compactsemistabsect}, and apply this to the local (co)homology in \textsection\ref{homloccohcharsect}. 

\subsubsection{A universal coefficient theorem for local (co)homology of CM complexes}\label{uctcmsection}
Recall (from \textsection\ref{s:modules}) that for bimodules $M$ and $N$ we denote by $\hom(M,N)$ the bimodule of left $R$-linear morphisms. We will show the following:
\begin{prop}\label{literallydual}
	Let $X$ be a simplicial complex that is locally CM of dimension $n$ at a subcomplex $L$. Then for every $\smallsimplex\in L$ we have a $R$-bimodule isomorphism
	$$
	h^n(\smallsimplex)\cong\hom(h_n(\smallsimplex),R).
	$$
	If $\smallsimplex<\smallsimplexone$, then $\hom(-,R)$ takes $h_n(\smallsimplex<\smallsimplexone)$ to $h^n(\smallsimplexone>\smallsimplex)$ under these isomorphisms.
 \end{prop}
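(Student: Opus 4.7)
The plan is to construct a natural chain-level map $\alpha\colon h^n(\smallsimplex) \to \hom(h_n(\smallsimplex), R)$ and prove it is an isomorphism via a dimension-shifting $\tn{Ext}$ argument. Writing $C_\bullet = C_\bullet(X, X-\ostar\smallsimplex)$, this is a chain complex of $R$-bimodules that are free as left $R$-modules, with $H_j(C_\bullet) = 0$ for $j \neq n$ (by the locally CM hypothesis), $H_n(C_\bullet) = h_n(\smallsimplex)$, and $C_j = 0$ for $j < k := \dim\smallsimplex$. A cocycle $\phi \in Z^n(\hom(C_\bullet,R))$ is a left-$R$-linear map $\phi\colon C_n \to R$ vanishing on $B_n = \im d_{n+1}$; restricting $\phi$ to $Z_n = \ker d_n$ factors through $h_n(\smallsimplex) = Z_n/B_n$ to give $\bar\phi\colon h_n(\smallsimplex)\to R$. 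Any coboundary $\phi = \psi \circ d_n$ restricts to zero on $Z_n$, so the assignment $[\phi] \mapsto \bar\phi$ descends to a well-defined map $\alpha$.

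The key input is the vanishing $\tn{Ext}^i(B_j, R) = 0$ for all $i \geq 1$ and $k \leq j \leq n-1$. In this range $H_j(C_\bullet) = 0$ gives $B_j = Z_j$, producing short exact sequences $0 \to B_j \to C_j \to B_{j-1} \to 0$ for $k+1 \leq j \leq n-1$, together with the base case $B_k = C_k$ which is free. Because $\tn{Ext}^i(C_j, R) = 0$ for $i \geq 1$, each associated long exact $\tn{Ext}$ sequence yields a dimension shift $\tn{Ext}^i(B_j, R) \cong \tn{Ext}^{i+1}(B_{j-1}, R)$, and cascading down to the free module $B_k$ produces the claimed vanishing.

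Surjectivity of $\alpha$ then follows from the vanishing of $\tn{Ext}^1(C_n/Z_n, R) = \tn{Ext}^1(B_{n-1}, R)$: given $f \in \hom(h_n(\smallsimplex), R)$, the composite $Z_n \twoheadrightarrow h_n(\smallsimplex) \xrightarrow{f} R$ extends along $Z_n \hookrightarrow C_n$ to some $\phi\colon C_n \to R$, and $\phi \circ d_{n+1} = 0$ automatically because $\phi|_{B_n}$ factors through the composite $B_n \hookrightarrow Z_n \twoheadrightarrow h_n(\smallsimplex)$, which vanishes. Injectivity follows from the vanishing of $\tn{Ext}^1(C_{n-1}/B_{n-1}, R) = \tn{Ext}^1(B_{n-2}, R)$: if $\bar\phi = 0$ then $\phi|_{Z_n} = 0$, so $\phi$ factors through $C_n/Z_n \cong B_{n-1}$ as some $\phi'\colon B_{n-1} \to R$, which then extends along $B_{n-1} \hookrightarrow C_{n-1}$ to a $\psi\colon C_{n-1}\to R$ witnessing $\phi = \psi \circ d_n$ as a coboundary.

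The bimodule structure and naturality follow from the construction. The formula $(r \cdot f \cdot s)(m) = f(mr)s$ from \textsection\ref{s:modules} is preserved by $\alpha$ because restriction and quotient are compatible with both $R$-actions on the free bimodule $C_n$. For naturality in $\smallsimplex < \smallsimplexone$, both the cosheaf map $h^n(\smallsimplexone > \smallsimplex)$ and the dual sheaf map $\hom(h_n(\smallsimplex < \smallsimplexone), R)$ are induced by the evident chain-level quotient $C_\bullet(X, X - \ostar\smallsimplex) \to C_\bullet(X, X - \ostar\smallsimplexone)$, and $\alpha$ is manifestly natural in chain maps. The main obstacle I anticipate is cleanly handling the endpoint $j = k$ of the dimension-shifting induction (where $B_k = C_k$ collapses the SES) and tracking the noncommutative bimodule conventions from \textsection\ref{s:modules} throughout.
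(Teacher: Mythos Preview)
Your argument is correct. The paper's proof invokes the Universal Coefficient Spectral Sequence
\[
E_2^{p,q}=\mathbf{Ext}_R^q(H_p(C_\bullet),R) \Rightarrow H^{p+q}(\hom(C_\bullet,R))
\]
as a black box and observes that it collapses on the $E_2$-page because $H_*(C_\bullet)$ is concentrated in degree $n$; setting $k=n$ reads off $h^n(\smallsimplex)\cong\mathbf{Ext}^0(h_n(\smallsimplex),R)=\hom(h_n(\smallsimplex),R)$, and naturality is inherited from naturality of the spectral sequence in $C_\bullet$. Your approach instead constructs the comparison map by hand and proves bijectivity by the classical d\'ecalage $\mathbf{Ext}^i(B_j,R)\cong\mathbf{Ext}^{i+1}(B_{j-1},R)$, bottoming out at the free module $B_k=C_k$. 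This is precisely the dimension-shifting that underlies (this special case of) the UCSS, so the mathematics is the same; the difference is that the paper outsources it to a citation while you unfold it explicitly. Your version is longer but entirely self-contained and makes the bimodule compatibility and naturality transparent at the chain level, whereas the paper's version is terse but requires the reader to accept the spectral sequence and its naturality. The edge cases you flagged ($j=k$, and implicitly $k\in\{n,n-1\}$) are indeed harmless: when the shifting runs out of room the relevant $B_j$ is either free or zero.
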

This is an application of the following well-known generalisation of the Universal Coefficient Theorem. For details on this and spectral sequences coming from double complexes see e.g. \cite[Appendix A]{Bredon2012}, \cite[Chapter 7]{Brown1982}, or \cite[Section 5.6]{Weibel1995}.

\begin{theorem}[Universal Coefficient Spectral Sequence]\label{ucss}
	Let $C_\bullet$ be a chain complex of projective $R$-modules concentrated in non-negative degrees and $M$ be an $R$-module. Then there is a convergent spectral sequence:
	$$
	E_2^{p,q}=\mathbf{Ext}_R^q(H_p(C_\bullet),M) \Rightarrow H^{p+q}(\hom(C_\bullet, M)).
	$$
\end{theorem}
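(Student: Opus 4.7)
The plan is to prove the theorem via the classical Cartan--Eilenberg resolution technique, extracting both the convergence statement and the $E_2$ identification from the two spectral sequences of a single double complex. First I would construct a Cartan--Eilenberg resolution $P_{\bullet,\bullet}$ of $C_\bullet$: a first-quadrant double complex of projective $R$-modules with an augmentation $P_{\bullet,0}\to C_\bullet$ such that each column $P_{p,\bullet}\to C_p$ is a projective resolution and, crucially, the induced maps on horizontal cycles, boundaries, and homology yield projective resolutions of $Z_p(C_\bullet)$, $B_p(C_\bullet)$, and $H_p(C_\bullet)$ respectively. Existence is standard; I would cite \cite[Chapter 5.7]{Weibel1995}.

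Next I would apply $\hom(-,M)$ levelwise to obtain a first-quadrant cochain double complex $D^{\bullet,\bullet}:=\hom(P_{\bullet,\bullet},M)$ and consider the two spectral sequences coming from its row and column filtrations. Convergence of both to the cohomology of the totalisation $\mathbf{Tot}(D)$ is automatic since $D$ is first-quadrant.

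For the column filtration, the $E_1$ page is the vertical cohomology $H^q_v(\hom(P_{p,\bullet},M))=\mathbf{Ext}^q(C_p,M)$, which vanishes for $q\geq 1$ since $C_p$ is projective and equals $\hom(C_p,M)$ for $q=0$. This spectral sequence therefore collapses at $E_2$ and gives $H^n(\mathbf{Tot}(D))\cong H^n(\hom(C_\bullet,M))$. For the row filtration, the key observation is that the Cartan--Eilenberg property makes the horizontal short exact sequences $0\to Z^h_p(P_{\bullet,q})\to P_{p,q}\to B^h_{p-1}(P_{\bullet,q})\to 0$ split degreewise (since their quotients are projective), so $\hom(-,M)$ commutes with horizontal cohomology and $E_1^{p,q}=\hom(H^h_p(P_{\bullet,q}),M)$. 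Because $q\mapsto H^h_p(P_{\bullet,q})$ is by construction a projective resolution of $H_p(C_\bullet)$, taking the induced vertical differential on the $E_1$ page yields $E_2^{p,q}=\mathbf{Ext}^q(H_p(C_\bullet),M)$. Combining the two spectral sequences produces the stated convergence $E_2^{p,q}=\mathbf{Ext}^q(H_p(C_\bullet),M)\Rightarrow H^{p+q}(\hom(C_\bullet,M))$.

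The main technical obstacle is the construction of the Cartan--Eilenberg resolution and the verification that the horizontal short exact sequences of projectives split sufficiently for $\hom(-,M)$ to commute with horizontal (co)homology; both are classical but notationally heavy, so I would outsource them to \cite{Weibel1995, Brown1982} rather than reproduce them. Everything else in the proof is formal manipulation of a first-quadrant double complex, together with the projectivity of $C_p$ to force the collapse of one of the two spectral sequences.
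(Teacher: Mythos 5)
The paper does not actually prove Theorem~\ref{ucss}; it is stated as a well-known result with pointers to \cite[Appendix A]{Bredon2012}, \cite[Chapter 7]{Brown1982}, and \cite[Section 5.6]{Weibel1995}, so there is no ``paper's own proof'' to compare against. Your proposed argument via a Cartan--Eilenberg resolution $P_{\bullet,\bullet}\to C_\bullet$, applying $\hom(-,M)$, and comparing the two spectral sequences of the resulting first-quadrant double complex is exactly the standard proof (it is, for instance, the one underlying Weibel's treatment of hyper-derived functors). Two small points worth making explicit if you write this out in full: first, the collapse of the column-filtration spectral sequence uses projectivity of each $C_p$ to kill $\mathbf{Ext}^q(C_p,M)$ for $q\geq 1$, and you should note that the resulting edge map is induced by the augmentation $P_{\bullet,0}\to C_\bullet$, so the abutment is genuinely $H^{p+q}(\hom(C_\bullet,M))$ and not merely abstractly isomorphic to it; second, for $\hom(-,M)$ to commute with horizontal cohomology you need both short exact sequences $0\to Z^h_p\to P_{p,q}\to B^h_{p-1}\to 0$ and $0\to B^h_p\to Z^h_p\to H^h_p\to 0$ to split, which follows from the Cartan--Eilenberg requirement that $B^h_p$ and $H^h_p$ (and hence $Z^h_p$) be projective in every bidegree. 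With those points spelled out the argument is complete and correct.
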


\begin{proof}[Proof of Proposition~\ref{literallydual}]
	Under the assumption that $X$ is locally CM at $L$ for $\smallsimplex\in L$ the spectral sequence from Theorem~\ref{ucss} applied to $C_\bullet=C_\bullet(X,X-\ostar\smallsimplex)$ (recall that as $X$ is locally finite this is a chain complex of finitely generated free $R$-modules, see \textsection\ref{lochomcompsection}) and $M=R$ collapses on the $E_2$-page. Note that $C^\bullet(X,X-\ostar\smallsimplex)\cong\hom(C_\bullet(X,X-\ostar\smallsimplex),R)$ as $R$-bimodules. This gives
	$$
	h^{k}(\smallsimplex)=\mathbf{Ext}_R^{k-n}(h_n(\smallsimplex),R).
	$$
	Setting $k=n$ now gives the desired result. The spectral sequence from Theorem~\ref{ucss} is natural in $C_\bullet$, so is compatible with the (co)restriction maps.
\end{proof}

\subsubsection{Locally finite homology in degree zero}\label{locfindualsect}
For the remainder of Section~\ref{cmpropssection} we will assume that $R$ is a hereditary ring. We have the following characterisation of the locally finite homology in degree zero of the local cohomology cosheaf:

\begin{proposition}\label{cosheafhomprop}
	Let $X$ be $n$-dimensional and locally CM of dimension $n$ over a hereditary ring $R$ at a subcomplex $L$. We have an isomorphism of $R$-bimodules
	\begin{equation}
	H^\lf_0(L;h^n|_L)\xrightarrow{\cong} \hom(\Gamma_c(h_n|_L),R),\label{lfcohshhomsecthom}
	\end{equation}
	natural with respect to inclusions of subcomplexes at which $X$ is locally CM. This map is at the chain level on a generator $v \under \bigsimplex^* \in C_0(L; h^n|_L)$ given by
	\begin{equation}\label{fromhomtohom}
		 v \under \bigsimplex^* \mapsto \left(s \mapsto  s_\bigsimplex^v\right),	
	\end{equation}
	where $s^v_\bigsimplex$ denotes the $v\under \bigsimplex$-component of $s^v \in h_n(v)$. 
\end{proposition}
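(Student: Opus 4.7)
The plan is to realise the chain complex $C^\lf_\bullet(L;h^n|_L)$ as the $R$-linear dual of the cochain complex $C_c^\bullet(L;h_n|_L)$, and then extract $H^\lf_0$ algebraically using the hereditary structure of $R$.

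First I would invoke Proposition~\ref{literallydual}, which under the locally CM hypothesis provides a natural bimodule isomorphism $h^n(\smallsimplex)\cong\hom(h_n(\smallsimplex),R)$ for each $\smallsimplex\in L$, intertwining the cosheaf structure of $h^n$ with the $R$-dual of the sheaf structure of $h_n$. Assembling across simplices and using $\hom(\bigoplus_i M_i,R)\cong\prod_i \hom(M_i,R)$ yields a natural identification
$$C^\lf_k(L;h^n|_L)\;=\;\prod_{\smallsimplex\in L_k} h^n(\smallsimplex)\;\cong\;\hom\bigl(C_c^k(L;h_n|_L),R\bigr).$$
A signs check based on naturality of the pairing $\langle -,-\rangle$ with respect to sheaf/cosheaf maps shows that this identification intertwines the boundary $\partial_k$ of $C^\lf_\bullet$ with the dual $\delta_{k-1}^{\ast}$ of the coboundary of $C_c^\bullet$. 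Under this identification the chain-level formula of \eqref{fromhomtohom} is exactly the restriction of a linear functional $C_c^0\to R$ along the inclusion $\Gamma_c=\ker\delta_0\hookrightarrow C_c^0$.

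With the chain-level picture in place, the descended map $\bar\Phi\colon H^\lf_0(L;h^n|_L)\to\hom(\Gamma_c(h_n|_L),R)$ is well-defined because any $G\circ\delta_0\in\im\partial_1$ vanishes on $\ker\delta_0=\Gamma_c$. For surjectivity, consider the short exact sequence
$$0\to\Gamma_c\to C_c^0\xrightarrow{\delta_0}\im\delta_0\to 0.$$
Each $h_n(\smallsimplex)$ is projective over the hereditary ring $R$ by Lemma~\ref{hnprojectivelemma}; hence $C_c^1=\bigoplus_e h_n(e)$ is projective, and since over a hereditary ring every submodule of a projective is projective, $\im\delta_0\subset C_c^1$ is projective as well. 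The sequence therefore splits, and dualising shows that every $f\in\hom(\Gamma_c,R)$ extends to some $\phi\in\hom(C_c^0,R)=C^\lf_0$ with $\bar\Phi([\phi])=f$.

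The main obstacle is injectivity. Given $\phi\in C^\lf_0$ with $\bar\Phi([\phi])=0$, the functional $\phi\colon C_c^0\to R$ factors uniquely through $\im\delta_0$ as $\phi=\bar\phi\circ\delta_0$ for some $\bar\phi\in\hom(\im\delta_0,R)$; to conclude $\phi\in\im\partial_1=\im\delta_0^{\ast}$, one must extend $\bar\phi$ along the inclusion $\im\delta_0\hookrightarrow C_c^1$. The strategy is to exhibit $\im\delta_0$ as a direct summand of $C_c^1$ by splitting in two stages: first, the sequence $0\to\ker\delta_1\to C_c^1\to\im\delta_1\to 0$ splits since $\im\delta_1\subset C_c^2$ is projective over hereditary $R$; second, one splits $\im\delta_0\hookrightarrow\ker\delta_1$, equivalently showing that the corresponding $\mathbf{Ext}^1$ obstruction vanishes. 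This last splitting is where the proof has to use the simplicial data directly and where the hereditary hypothesis is put to its strongest use. Naturality in inclusions of subcomplexes is inherited from the evident naturality of each identification above.
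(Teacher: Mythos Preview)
Your setup is exactly the paper's: dualise the augmented cochain complex
\[
0\rightarrow\Gamma_c(h_n|_L)\xrightarrow{\epsilon}C_c^0(L;h_n|_L)\rightarrow C_c^1(L;h_n|_L)\rightarrow\cdots
\]
term-by-term via Proposition~\ref{literallydual}, observe that $\im\delta_0$ is projective over the hereditary ring so that the short exact sequence $0\to\Gamma_c\to C_c^0\to\im\delta_0\to 0$ splits, and deduce that $\epsilon^*$ is split surjective. The paper then writes ``On cohomology $\epsilon^*$ therefore induces the isomorphism'' without further comment; you correctly isolate injectivity as the nontrivial step, and your proposed mechanism is to exhibit $\im\delta_0$ as a direct summand of $C_c^1$.

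The difficulty you flag is genuine, and your proposed two-stage splitting cannot be completed. The cokernel of $\im\delta_0\hookrightarrow\ker\delta_1$ is $H_c^1(L;h_n|_L)$, and there is no reason for this to be projective. Take $X=L$ a triangulation of $\mathbb{RP}^2$ over $R=\mathbb{Z}$: this is a closed $2$-manifold, hence locally CM of dimension $2$, and $h_2$, $h^2$ are the orientation sheaf and cosheaf (locally constant, stalk $\mathbb{Z}$, monodromy $-1$). Here $H_c^1(\mathbb{RP}^2;h_2)\cong H_1(\mathbb{RP}^2;\mathbb{Z})=\mathbb{Z}/2$ by CM duality, so the sequence $0\to\im\delta_0\to\ker\delta_1\to\mathbb{Z}/2\to 0$ does not split, and the relevant extension obstruction $\mathrm{Ext}^1(\mathbb{Z}/2,\mathbb{Z})=\mathbb{Z}/2$ is nonzero.

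In fact this example shows that the stated isomorphism itself fails. One has $\Gamma_c(h_2)=\Gamma(h_2)=0$ (a nonorientable manifold has no nonzero global orientation section), so $\hom(\Gamma_c(h_2),\mathbb{Z})=0$; whereas $H_0^{\lf}(\mathbb{RP}^2;h^2)=H_0(\mathbb{RP}^2;h^2)$ is the coinvariants $\mathbb{Z}/\langle x-(-x)\rangle=\mathbb{Z}/2$. The map \eqref{fromhomtohom} is then the zero map $\mathbb{Z}/2\to 0$: surjective, as both your argument and the paper's establish, but not injective. So the gap you identified cannot be closed with simplicial data or a sharper use of the hereditary hypothesis; some additional assumption (for instance that $H_c^1(L;h_n|_L)$ be projective, automatic when $R$ is a field) is needed for the conclusion to hold.
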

	\begin{proof}
		Applying $\hom(-,R)$ to the augmented cochain complex for the compactly supported cohomology of $h_n$ over $L$ (Equation \eqref{augmentedcompact})
		$$
		0\rar \Gamma_c(h_n|_L) \xrightarrow{\epsilon} C_c^0(L, h_n|_L) \rar C_c^1(L;h_n|_L)\rar \dots
		$$
		gives the augmented chain complex
		\begin{equation}\label{augmentedlf}
			\dots \rar C_1^\lf(L;h^n|_L) \rar C_0^\lf(L;h^n|_L) \xrightarrow{\epsilon^*} \hom(\Gamma_c(h_n|_L),R) \rar 0.		
		\end{equation}
		Here we used that $h^n|_L\cong \hom(h_n|_L,R)$ by Proposition~\ref{literallydual}. To see that the last map is indeed surjective, note that as $\im d_0$ is a submodule of a projective module and $R$ is hereditary
		$$
		0 \rar \Gamma_c(h_n|_L) \xrightarrow{\epsilon} C_c^0(L, h_n|_L) \rar \im d_0 \rar 0
		$$
		is a short exact sequence of projective $R$-modules. Hence $\epsilon$ admits a splitting and so does its dual $\epsilon^*$ in Equation~\eqref{augmentedlf}. On cohomology $\epsilon^*$ therefore induces the isomorphism from Equation~\eqref{lfcohshhomsecthom}. One checks it is indeed given by the above formula. This formula also makes it clear that this isomorphism is natural.
	\end{proof}

\subsubsection{Compactly determined morphisms and semistability}\label{compactsemistabsect}
To characterise the (non-locally finite) cosheaf homology in degree zero, we use the following definition:
	
	\begin{definition}\label{compactlysuphomsdef}
		Let $X$ be a complex, let $\mathcal{F}$ be a sheaf on $X$, and let $M$ be an $R$-module. A left $R$-module morphism $f\colon \Gamma(\mathcal{F})\rightarrow M$ is called \emph{compactly determined} if there exists a finite subcomplex $K$ of $X$ such that for all sections $s \in \Gamma(X;\mathcal{F})$ with $s|_L=0$ one has $f(s)=0$. The \emph{right module of compactly determined morphisms} will be denoted $\homcd(\Gamma(\mathcal{F}),M)$.
	\end{definition}

Under a semistability condition on $\mathcal{F}$ these compactly determined morphisms come up naturally as dual to a colimit of sections over a filtration of $X$ by finite complexes. Recall that an inverse system $\{\Gamma_i\}_{i \in \N}$ is called semistable if for every $i$ there is a $j\geq i$ such that for all $k\geq j$ the images of the maps $\Gamma_k\rar \Gamma_i$ agree with the image of the map $\Gamma_{j}\rar \Gamma_i$. We need the following consequence of semistability for an inverse system of projective modules over a hereditary ring.

\begin{lemma}\label{semistablesplit}
	Let $R$ be a hereditary ring, let $\{\Gamma_i\}_{i \in \N}$ be a semistable inverse system of projective $R$-modules and write $\Gamma=\lim_i \Gamma_i$. Then for all large enough $k$ the module $\Gamma_k$ splits as a direct sum
	$$
	\Gamma_k\cong I \oplus Z_k,
	$$
	where $I$ is isomorphic to a submodule of $\Gamma$ (independent of $k$) along the canonical map $r_k \colon \Gamma\rar \Gamma_k$ and $Z_k=\ker(\Gamma_k \rar \Gamma_i)$ for some $i$.
\end{lemma}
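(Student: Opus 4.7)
The plan is to use the Mittag-Leffler condition (semistability) to identify a fixed projective submodule $I$ that appears as a direct summand of $\Gamma_k$ for every sufficiently large $k$, and simultaneously embeds into $\Gamma$.

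First, I would fix $i$ and apply semistability to find $N \geq i$ such that $\im(\Gamma_k \rar \Gamma_i)$ equals a common submodule $I \subseteq \Gamma_i$ for every $k \geq N$. Setting $Z_k = \ker(\Gamma_k \rar \Gamma_i)$ for $k \geq N$ yields the short exact sequence
$$
0 \rar Z_k \rar \Gamma_k \rar I \rar 0,
$$
with the same $I$ on the right for every $k \geq N$.

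Second, I would identify $I$ with $\im(r_i \colon \Gamma \rar \Gamma_i)$. Writing $I_j \subseteq \Gamma_j$ for the stable image at level $j$, the key substep is that the restricted transition maps $I_{j+1} \rar I_j$ are surjective: given $y \in I_j$, lift $y$ to $\Gamma_k$ for some $k$ larger than both semistability thresholds at levels $j$ and $j+1$, and push the lift forward into $I_{j+1}$, where it provides a preimage of $y$. Because $\N$ is countable, the standard diagonal argument for inverse systems with surjective transition maps then yields $\Gamma = \lim_j \Gamma_j = \lim_j I_j$ and shows this limit surjects onto each $I_j$, in particular onto $I$.

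Third, since $R$ is hereditary and $\Gamma_i$ is projective, the submodule $I \subseteq \Gamma_i$ is itself projective. This splits the sequence above as $\Gamma_k \cong I \oplus Z_k$. The same projectivity splits the surjection $\Gamma \twoheadrightarrow I$ via a section $t \colon I \rar \Gamma$, embedding $I$ as a direct summand of $\Gamma$; taking the splitting of the displayed sequence to be $r_k \circ t$ ensures the resulting inclusion $I \hookrightarrow \Gamma_k$ is realised along $r_k$.

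The hard part is step two: passing from the \emph{a priori} upper bound $I \supseteq \bigcap_{k \geq N} \im(\Gamma_k \rar \Gamma_i)$ to an actual lift out of the inverse limit is where the countable Mittag-Leffler argument is essential, and one must carefully coordinate the semistability thresholds across consecutive levels to verify the transition maps between stable images are surjective.
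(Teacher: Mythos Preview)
Your proposal is correct and follows the same overall strategy as the paper: use semistability to isolate the stable image $I\subseteq\Gamma_i$, observe that $I$ is projective because $R$ is hereditary, and split the short exact sequence $0\to Z_k\to\Gamma_k\to I\to 0$ for all large $k$.

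The one point of divergence is in how $I$ is embedded into $\Gamma$. The paper simply asserts that the splittings $I\hookrightarrow\Gamma_k$ assemble into a compatible system and then invokes the universal property of the limit; but sections chosen independently at each level need not be compatible with the transition maps, so this step, as written in the paper, is terse to the point of leaving a gap. Your route avoids this issue: by showing that the restricted transition maps $I_{j+1}\to I_j$ between stable images are surjective and that $\Gamma=\varprojlim I_j$, you obtain a surjection $\Gamma\twoheadrightarrow I$ directly, and then a single application of projectivity gives the section $t\colon I\to\Gamma$. Defining the splitting of $\Gamma_k$ as $r_k\circ t$ then makes the compatibility automatic. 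So your argument is essentially the paper's, but with the ``hard part'' you identify made explicit---and that explicitness is a genuine improvement.
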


\begin{proof}
	By semistability of $\{\Gamma_i\}_{i\in \N}$ there exists $j\geq i$ such that for all $k\geq j$ the restriction maps from $\Gamma_j$ and $\Gamma_k$ to $\Gamma_i$ have the same image $I$. Because the $\Gamma_k$ are projective and $R$ is hereditary, $I$ is projective. So we get for each $k\geq j$ a direct sum decomposition
	$$
	\Gamma_k\cong I \oplus Z_k
	$$
	where $Z_k=\ker(\Gamma_k\rar \Gamma_i)$. This allows us to define a system of maps $\{I \rar \Gamma_k \}_{k\geq j}$ which extends to a system of maps into the diagram $\{\Gamma_i\}_{i\in \N}$ by composing with the maps in the inverse system. By the universal property this gives a morphism $i\colon I \rar \Gamma$, which for $k\geq j$ and $s \in I \subset \Gamma_k$ satisfies $r_k i(s)=s$, exhibiting $I$ as a submodule of $\Gamma$.
\end{proof}

This has a useful consequence for the colimit of the dual system. Write $M^\vee$ for the $R$-dual $\hom(M,R)$ of a left module $M$, viewed as a right $R$-module.
\begin{lemma}\label{semistableinjective}
	Let $\{\Gamma_i\}_{i \in \N}$ be a semistable inverse system of projective left modules over a hereditary ring $R$ and write $\Gamma=\lim_i \Gamma_i$. Then the canonical map
	$$
	\rho\colon \colim_i \Gamma_i^\vee \rar \Gamma^\vee
	$$
	is injective.
\end{lemma}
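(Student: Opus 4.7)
I will use Lemma~\ref{semistablesplit} to identify the image of $r_k\colon\Gamma\rar\Gamma_k$ with the stable image of the inverse system at level $k$, which makes injectivity of $\rho$ transparent. Explicitly, an element $[f]\in \colim_i \Gamma_i^\vee$ is represented by some $f\in \Gamma_k^\vee$, and $\rho([f])=f\circ r_k$. To prove $\ker\rho=0$, I will assume $f\circ r_k=0$ and aim to exhibit $m\geq k$ such that $f\circ p_{m,k}=0$, where $p_{m,k}\colon \Gamma_m \rar \Gamma_k$ is the transition map of the inverse system; this immediately gives $[f]=0$ in the colimit.

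By semistability there exists $j\geq k$ with $p_{m,k}(\Gamma_m)=I$ for all $m\geq j$, where $I\subseteq\Gamma_k$ is the stable image at level $k$. The crux is to show $r_k(\Gamma)=I$: once this is established, the hypothesis $f\circ r_k=0$ becomes $f|_I=0$, which directly yields $f\circ p_{m,k}=0$ for $m\geq j$. The inclusion $r_k(\Gamma)\subseteq I$ is immediate from the factorisation $r_k=p_{j,k}\circ r_j$. For the reverse inclusion, I apply Lemma~\ref{semistablesplit} with its input ``$i$'' taken to be our level $k$: this produces, for each $m\geq j$, a splitting $\Gamma_m=I\oplus Z_m$ with $Z_m=\ker p_{m,k}$ in which $I$ sits as the image of a section $s_m$ of $p_{m,k}$, together with a global map $\iota\colon I\rar\Gamma$ satisfying $r_m\circ\iota=s_m$. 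Since $p_{m,k}\circ s_m=\mathrm{id}_I$, composing with $p_{m,k}$ gives $r_k\circ\iota=\mathrm{incl}\colon I\hookrightarrow\Gamma_k$, and hence $I\subseteq r_k(\Gamma)$.

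The main point requiring care will be bookkeeping: keeping track of how the fixed module $I$ is simultaneously embedded into each $\Gamma_m$ via a section of $p_{m,k}$ and into $\Gamma$ via the global lift $\iota$, and verifying that these embeddings are compatible under the transition maps in the way just indicated. The hereditary hypothesis and projectivity of the $\Gamma_i$ enter only through their role in the proof of Lemma~\ref{semistablesplit}; once the identification $r_k(\Gamma)=I$ is in place, the remainder of the proof is a brief formal chase.
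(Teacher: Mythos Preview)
Your proof is correct and follows essentially the same approach as the paper: both arguments invoke Lemma~\ref{semistablesplit} to produce the global section $\iota\colon I\to\Gamma$ and use it to conclude that a representative vanishing after pullback to $\Gamma$ must already vanish on the stable image, hence become zero at a finite stage of the colimit. Your presentation is a mild reorganisation --- you first isolate the identity $r_k(\Gamma)=I$ and then deduce $f\circ p_{m,k}=0$ in one stroke, whereas the paper decomposes $\Gamma_m=I\oplus Z_m$ and checks vanishing on each summand separately --- but the substance is identical.
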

\begin{proof}
	To prove that $\rho$ is injective, suppose that $\phi\in \ker \rho$ and pick $\phi_i \in \Gamma_i^\vee$ representing $\phi$. We want to show that $\phi_i$ represents the trivial element in the colimit, for this it suffices to show that there exists $k>i$ such that $\phi_i r_i^k=0$, where $r_i^k\colon \Gamma_k \rar \Gamma_i$ is the restriction map. Picking $k$ large enough, we can apply Lemma~\ref{semistablesplit} and evaluate $\phi_ir_i^k$ on $\Gamma_k \cong I \oplus Z_k$ by evaluating on elements in $I$ and $Z_k$. By definition, $\phi_ir^k_i$ is zero on the $Z_k$ summand, while for $s\in I$ we have $r^k_i(s)= r_i i(s)$, where $i\colon I \rar \Gamma$ exhibits $I$ as a submodule of $\Gamma$. But this means
	$$
	\phi_ir^k_i(s)= \phi_i r_i i(s)=\rho(\phi)(i(s))=0,
	$$ 
	so $\phi=0$.
\end{proof}

For sheaves of projective modules over a hereditary ring we additionally have:

\begin{lemma}\label{colimhomcdlem}
	Let $\{K_i\}_{i\in \N}$ be a filtration of $X$ by finite subcomplexes such that the inverse system $\{\Gamma(\mathcal{F}|_{K_i})\}_{i\in \N}$ of sections of a sheaf $\mathcal{F}$ of projective left modules over a hereditary ring on $X$ is semistable. Then we have that the map
	$$
	\colim_i\hom(\Gamma({\mathcal{F}|_{K_i}}), R)\xrightarrow{\rho}\homcd (\Gamma(\mathcal{F}),R),
	$$
	induced by the universal property for the system of maps given by pulling back along the restriction maps $\Gamma(\mathcal{F})\rar\Gamma({\mathcal{F}|_{K_i}})$, is an isomorphism of right $R$-modules.
\end{lemma}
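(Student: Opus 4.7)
The plan is to prove $\rho$ is an isomorphism by establishing injectivity and surjectivity separately, leveraging the two preceding lemmas. I would begin with three preparatory observations. First, $\Gamma(\mathcal{F}) \cong \lim_i \Gamma(\mathcal{F}|_{K_i})$, since every simplex of $X$ lies in some $K_i$ and a global section of $\mathcal{F}$ is precisely a compatible family of sections on the $K_i$. Second, each $\Gamma(\mathcal{F}|_{K_i})$ is itself projective: it embeds via vertex evaluation into the finite direct sum of the projective modules $\mathcal{F}(v)$ over the vertices of $K_i$, and a submodule of a projective module is projective over the hereditary ring $R$. Third, $\rho$ indeed lands in $\homcd(\Gamma(\mathcal{F}),R)$: a representative $\phi_i$ yields the map $s \mapsto \phi_i(s|_{K_i})$, which vanishes whenever $s|_{K_i}=0$, so $K_i$ witnesses compact determination.

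For injectivity, the composite of $\rho$ with the inclusion $\homcd(\Gamma(\mathcal{F}),R) \hookrightarrow \hom(\Gamma(\mathcal{F}),R)$ is exactly the canonical map $\colim_i \Gamma(\mathcal{F}|_{K_i})^\vee \to \Gamma(\mathcal{F})^\vee$ treated in Lemma~\ref{semistableinjective}, and that lemma immediately applies thanks to the projectivity observation above. Hence $\rho$ is injective.

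Surjectivity is the main obstacle, because the restriction maps $\Gamma(\mathcal{F}) \to \Gamma(\mathcal{F}|_{K_i})$ need not be surjective, so a compactly determined $f$ on $\Gamma(\mathcal{F})$ does not have an obvious extension to $\Gamma(\mathcal{F}|_{K_i})$. My plan is to use Lemma~\ref{semistablesplit} to supply the missing splitting. Given $f \in \homcd(\Gamma(\mathcal{F}),R)$ with compact witness $K$, choose $i$ with $K \subseteq K_i$ and pick $k$ sufficiently large to obtain the splitting $\Gamma(\mathcal{F}|_{K_k}) \cong I \oplus Z_k$ with $Z_k = \ker(\Gamma(\mathcal{F}|_{K_k}) \to \Gamma(\mathcal{F}|_{K_i}))$ and an embedding $\iota \colon I \to \Gamma(\mathcal{F})$ splitting the restriction map $r_k$. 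Define $\phi_k \in \hom(\Gamma(\mathcal{F}|_{K_k}),R)$ by $\phi_k(a+b) = f(\iota(a))$ on the summands, and verify $\rho([\phi_k])=f$. The key calculation, where semistability and compact determination interact, is that for $s \in \Gamma(\mathcal{F})$ with $r_k(s) = a + b$, the section $s - \iota(a)$ restricts to $b \in Z_k$ on $K_k$, hence vanishes on $K_i$ and therefore on $K$; compact determination of $f$ then forces $f(s) = f(\iota(a)) = \phi_k(r_k(s))$, as required.
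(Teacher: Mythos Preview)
Your proof is correct and follows essentially the same approach as the paper: injectivity via Lemma~\ref{semistableinjective}, and surjectivity by using the splitting from Lemma~\ref{semistablesplit} to define a preimage $\phi_k$ on $\Gamma(\mathcal{F}|_{K_k})$. Your verification that $\rho([\phi_k])=f$ is in fact slightly more careful than the paper's, which asserts that $\ker(r_k)$ is the complement of $I$ in $\Gamma$ when really one only gets that $s-\iota(a)$ lies in $r_k^{-1}(Z_k)$; your argument handles this correctly by observing that such elements still restrict to zero on $K_i$.
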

\begin{proof}
	The universal property induces a map to $\hom (\Gamma(\mathcal{F}),R)$. Note that this map factors through $\homcd(\Gamma(\mathcal{F}),R)$ by construction. We need to show that it is an isomorphism onto this submodule. 
	
	Write $\Gamma_i=\Gamma({\mathcal{F}|_{K_i}})$ and $\Gamma=\Gamma(\mathcal{F})$ and $r_i\colon \Gamma\rar \Gamma_i$ for the restriction maps. Note that the $\Gamma_i$ are finite direct sums of projective modules, hence projective, so by Lemma~\ref{semistableinjective} the map $\rho$ is injective. We still need to show that $\rho$ surjects onto the compactly determined morphisms. So let $f\in \homcd(\Gamma,R)$, and suppose that $f$ is compactly determined on $K_i$, so that $f(s)=0$ whenever $r_i(s)=0$. We want to show that $f$ can be obtained by restriction to a finite subcomplex. By Lemma~\ref{semistablesplit}, there exists a finite subcomplex $K_k \supseteq K_i$ such that $\Gamma_k\cong I \oplus Z_k$ where $Z_k=\ker(\Gamma_k \rar \Gamma_i)$. Define $f'\in \Gamma_k^\vee$ by setting $f'|_{Z_k}=0$ and $f'|_{I}=f\circ i$, where $i\colon I \rar \Gamma$ is the map that exhibits $I$ as a submodule of $\Gamma$ satisfying $i r_k = \id_I$. We claim that $f'$ is such that precomposing $f'$ with the restriction $r_k\colon\Gamma \rar\Gamma_k$ is equal to $f$. Note that $r_k$ and $i$ split $I$ as a direct summand of $\Gamma$, with $\ker(r_k)$ as the complement. For $s\in \ker(r_k)$ we have $r_i(s)=r_kr_k^i(s)=0$, so $f(s)=0$ as $f$ is compactly determined on $K_i$. For $s\in I$ we have 
	$$
	f(s)=f(ir_k(s))=f'r_k(s)
	$$
	as claimed.
\end{proof}

\subsubsection{Characterising $H_0(L;h^n|_L)$ under semistability assumptions}\label{homloccohcharsect}
Applying the preceding discussion to the local homology sheaf gives the following analogue of Proposition~\ref{cosheafhomprop}:

\begin{theorem}\label{cosheafhomcompactsup}
	Let $R$ be a hereditary ring, and assume that an $n$-dimensional locally finite complex $X$ is locally CM of dimension $n$ over $R$ at a subcomplex $L$. If $\{K_i\}_{i\in \N}$ is a filtration of $L$ by finite subcomplexes such that the inverse system $\{\Gamma(\hres{K_i})\}_{i\in \N}$ is semistable we have an isomorphism of $R$-bimodules
	\begin{equation}
	H_0(L;h^n|_L)\xrightarrow{\cong} \homcd(\Gamma(h_n|_L),R).	\label{cohshhomcompsup}
	\end{equation}
	which is at the chain level given by the formula from Equation~\eqref{fromhomtohom}.
\end{theorem}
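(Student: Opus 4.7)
The plan is to reduce the statement to the finite case handled in Proposition~\ref{cosheafhomprop} by writing $L$ as the filtered union of the $K_i$, and then to invoke Lemma~\ref{colimhomcdlem} to identify the resulting colimit of duals with $\homcd(\Gamma(h_n|_L), R)$.

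First I would use that the chain complex $C_\bullet(L; h^n|_L)$ is, by definition, a direct sum over the simplices of $L$, and hence a filtered colimit of the subcomplex chains $C_\bullet(K_i; h^n|_{K_i})$. Since homology commutes with filtered colimits, this yields
$$
H_0(L; h^n|_L) \cong \colim_i H_0(K_i; h^n|_{K_i}).
$$
For each finite $K_i$, ordinary and locally finite homology agree, and $\Gamma_c(h_n|_{K_i}) = \Gamma(h_n|_{K_i})$. Since $X$ is locally CM of dimension $n$ at $K_i$ by restriction of the hypothesis at $L$, Proposition~\ref{cosheafhomprop} supplies a bimodule isomorphism
$$
H_0(K_i; h^n|_{K_i}) \xrightarrow{\cong} \hom(\Gamma(h_n|_{K_i}), R),
$$
given on a generator $v\under\bigsimplex^*$ by the formula $s\mapsto s_{\bigsimplex}^{v}$ of Equation~\eqref{fromhomtohom}.

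Next I would use the naturality of this isomorphism with respect to the inclusions $K_i\hookrightarrow K_j$ to form the colimit and obtain
$$
H_0(L; h^n|_L) \xrightarrow{\cong} \colim_i \hom(\Gamma(h_n|_{K_i}), R).
$$
By Lemma~\ref{hnprojectivelemma}, the hereditary assumption on $R$ together with the local CM condition at $L$ implies that $h_n|_L$ is a sheaf of projective left $R$-modules; the semistability of $\{\Gamma(h_n|_{K_i})\}$ is assumed. Thus Lemma~\ref{colimhomcdlem} applies with $\mathcal{F}= h_n|_L$ and identifies the above colimit with $\homcd(\Gamma(h_n|_L), R)$. Composing the two isomorphisms gives the statement. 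Tracing a generator $v\under\bigsimplex^*\in C_0(L; h^n|_L)$ through the composition: it lives in $C_0(K_i; h^n|_{K_i})$ for any $K_i$ containing $v$ and a maximal simplex $\bigsimplex$ through it, where it maps via Proposition~\ref{cosheafhomprop} to $s\mapsto s^{v}_{\bigsimplex}$ on $\Gamma(h_n|_{K_i})$. Pulling back along $\Gamma(h_n|_L)\to\Gamma(h_n|_{K_i})$, that functional is precisely the compactly determined homomorphism of Equation~\eqref{fromhomtohom}.

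The main obstacle I expect is purely bookkeeping: making sure that the two colimits (on homology, and on duals of sections) are taken along the same restriction systems so that the composite is induced by the stated chain-level formula, and checking that the bimodule structures line up through every step. The substantive ingredients are already packaged into Proposition~\ref{cosheafhomprop} and Lemma~\ref{colimhomcdlem}; the hypotheses of hereditariness, local CM, and semistability are exactly what is required to feed into them, so no further input should be needed.
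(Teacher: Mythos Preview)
Your proposal is correct and follows essentially the same route as the paper's proof: pass to the filtered colimit over the $K_i$ using that homology commutes with filtered colimits, apply Proposition~\ref{cosheafhomprop} on each finite piece (where $\Gamma_c=\Gamma$ and $H_0=H_0^\lf$), and then invoke Lemma~\ref{colimhomcdlem} with $\mathcal{F}=h_n|_L$, using Lemma~\ref{hnprojectivelemma} to verify projectivity of the stalks. Your additional remarks on tracing the chain-level formula and bimodule structures are consistent with the paper's brief closing comment that the chain-level formula makes the bimodule isomorphism clear.
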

	\begin{proof}
	Observe that taking a colimit over the filtration by finite subcomplexes $K_i$ of $L$ we have, using that homology commutes with filtered colimits,
	$$
	H_0(L;h^n|_L)\cong \colim_i H_0(K_i;h^n|_{K_i})= \colim_i H^\lf_0(K_i;h^n|_{K_{i}}).
	$$
	Noting that $\Gamma_c(h_n|_{K_i})=\Gamma(h_n|_{K_i})$ for finite $K_i$, we then have by Proposition~\ref{cosheafhomprop}
	\begin{equation}\label{hiscolim}
	H_0(L;h^n|_L)\cong\colim_i \hom(\Gamma_c({h_n|_{K_i}}), R)=\colim_i \hom(\Gamma({h_n|_{K_i}}), R).	
	\end{equation}
	Noting that $h_n$ is a sheaf of projective left modules (Lemma~\ref{hnprojectivelemma}), by Lemma~\ref{colimhomcdlem} we then have
	$$
		H_0(L;h^n|_L)\cong \homcd (\Gamma(h_n|_L),R).
	$$
	The chain level formula makes it clear that this is an isomorphism of bimodules.
\end{proof}

\begin{remark}
	Proposition~\ref{cosheafhomprop} and Theorem~\ref{cosheafhomcompactsup} also hold under the weaker assumption that $h_*$ is a projective left module in every degree by applying the universal coefficient theorem and the sequence of arguments above degree by degree. 
\end{remark}

When $R$ is a PID we have a consequence of this and results from \cite{Geoghegan2008}:
\begin{corollary}\label{c:semistable}
	Let $R$ be a PID and assume that an $n$-dimensional locally finite complex $X$ is locally CM of dimension $n$ over $R$ at a countable subcomplex $L$. Then for any filtration $\{K_i\}_{i\in \N}$ of $L$ by finite subcomplexes the inverse system $\{\Gamma(\hres{K_i})\}_{i\in \N}$ is semistable if and only if $H_0(L;h^*|_L)$ is a (countably generated) free $R$-module. In this case, we have
	$$
	H_0(L;h^n|_L)\cong \homcd (\Gamma(\mathcal{F}),R).
	$$
\end{corollary}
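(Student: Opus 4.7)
The starting point is the intermediate identification
$$H_0(L;h^n|_L) \cong \colim_i \hom(\Gamma(h_n|_{K_i}), R)$$
which appears inside the proof of Theorem~\ref{cosheafhomcompactsup} (Equation~\eqref{hiscolim}) and which is valid without any semistability hypothesis. Since $R$ is a PID, Lemma~\ref{hnprojectivelemma} together with the fact that submodules of free modules over a PID are free implies that $h_n$ takes values in free $R$-modules. Each section module $\Gamma(h_n|_{K_i})$ embeds into the finite direct sum $\bigoplus_{v \in (K_i)_0} h_n(v)$ of finitely generated free $R$-modules, hence is itself a finitely generated free $R$-module, and so is its $R$-dual. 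So the whole problem reduces to understanding a directed $\N$-indexed colimit of finitely generated free $R$-modules over a PID.

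For the implication ``semistable $\Rightarrow$ free,'' my plan is to iterate Lemma~\ref{semistablesplit}: for each reference index $m$ there is a threshold $j(m)$ such that for all $k \geq j(m)$ we can decompose $\Gamma(h_n|_{K_k}) \cong I_m \oplus Z_k^m$, with $I_m \hookrightarrow \Gamma(h_n|_L)$ and $Z_k^m = \ker(\Gamma(h_n|_{K_k}) \to \Gamma(h_n|_{K_m}))$. Passing to a cofinal subsequence and dualizing, I would arrange the terms so that in the colimit the $I_m^\vee$ summands appear stably and accumulate, realizing $\colim_i \hom(\Gamma(h_n|_{K_i}), R)$ as a directed union of finitely generated free modules with split inclusions. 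Such a colimit is a countable direct sum of finitely generated free $R$-modules and hence free.

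The converse ``free $\Rightarrow$ semistable'' is the main obstacle. Here I would appeal directly to the analysis of Mittag-Leffler inverse sequences of countable abelian groups developed in Geoghegan~\cite{Geoghegan2008}, extended to inverse sequences of finitely generated free modules over a PID. The countability hypothesis on $L$ is what makes this framework applicable: it ensures each $\Gamma(h_n|_{K_i})$ is countable. The relevant dichotomy says that for such a sequence, freeness of the colimit of $R$-duals forces the original inverse system to be Mittag-Leffler; contrapositively, any failure of Mittag-Leffler produces a non-free summand (of the Baer--Specker type) in the colimit. Once semistability has been established in either direction, the final isomorphism $H_0(L;h^n|_L) \cong \homcd(\Gamma(h_n|_L), R)$ is exactly the conclusion of Theorem~\ref{cosheafhomcompactsup}.
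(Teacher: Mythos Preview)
Your proposal is correct and follows essentially the same route as the paper: start from the identification in Equation~\eqref{hiscolim}, invoke \cite[Proposition~12.5.6]{Geoghegan2008} for the equivalence between semistability of $\{\Gamma(h_n|_{K_i})\}$ and freeness of the colimit of duals, and finish with Lemma~\ref{colimhomcdlem} (equivalently Theorem~\ref{cosheafhomcompactsup}) for the compactly determined description. The only difference is cosmetic: the paper cites Geoghegan for both directions of the biconditional in one stroke, whereas you sketch the forward implication by hand via Lemma~\ref{semistablesplit} before appealing to Geoghegan for the converse.
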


\begin{proof}
	Like in the proof of Theorem~\ref{cosheafhomcompactsup}, we have 
	$$
	H_0(L;h^n|_L)\cong \colim_i \hom(\Gamma(\hres{K_i}),R).
	$$
	Note that this module is countably generated as $L$ is countable and $X$ is locally finite. The result \cite[Proposition 12.5.6]{Geoghegan2008} states that $\{\Gamma(\hres{K_i})\}_{i\in \N}$ is semistable if and only if $\colim_i \hom(\Gamma(\hres{K_i}),R)$ is a countably generated free module, establishing the equivalence claimed.	In the case that either of these equivalent conditions holds, Lemma~\ref{colimhomcdlem} gives the isomorphism.
\end{proof}

\section{Duality for Cohen--Macaulay Complexes}\label{cmdualitysect}
In this section we prove the CM Duality Theorem. In Section~\ref{mvsect} we set up, for a full subcomplex $L$ of a locally finite oriented simplicial complex $X$, a \emph{Mayer--Vietoris double complex $D^\bullet_\bullet(L)$} such that the homology of the total complex of $D^\bullet_\bullet(L)$ is the homology $H_l(X,L\vc)$ of $X$ relative to the vertex complement $L\vc$ (Definition~\ref{vcandbeforedef}) of $L$. When $X$ is locally CM of dimension $n$ at $L$, the column spectral sequence of $D^\bullet_\bullet(L)$ collapses and gives an isomorphism $H_{n-p}(X,L\vc)\xrightarrow{\cong} H_c^p(X; h_n|_L)$. In Theorem~\ref{standardexactrowspecseq} we provide a chain level expression for the inverse to this isomorphism, which is reminiscent of the cap product. This is used to establish CM Duality Theorem in Section~\ref{dualitysection}.

\subsection{The Mayer--Vietoris spectral sequence}\label{mvsect}
In this section we define the Mayer--Vietoris (MV) double complex $D^\bullet_\bullet(X)$ in \textsection\ref{mvdoublecompsect}, and discuss its column spectral sequence in \textsection\ref{mvcolmnspecsecsec}. We then prove that the homology of $D^\bullet_\bullet(L)$ is isomorphic to $H_l(X,L\vc)$ by providing an augmentation for $D^\bullet_\star(L)$ by $C_\star(X,L\vc)$ and proving the resulting rows are acyclic in \textsection\ref{augmvsec}. The main technical tool is a map $\lambda$ called the \emph{last vertex lift} (Definition~\ref{lastvertexliftdef}). Using $\lambda$, we provide an explicit chain homotopy equivalence $\mathfrak{c}\colon \textbf{Tot}D\rar C_*(X,L\vc)$ in \textsection\ref{explicitinversesect}. This $\dcap$ will be compared to the cap product later. Variants of the MV double complex are needed to prove duality results involving other versions of the cap product discussed in Proposition~\ref{usefulrelativecups}. These are discussed in \textsection\ref{prodmvcompsec} and \textsection\ref{dualsection}.

\subsubsection{The Mayer--Vietoris double complex}\label{mvdoublecompsect}
For a full subcomplex $L$ of a locally finite oriented simplicial complex $X$ we define the \emph{Mayer--Vietoris double complex} of $R$-bimodules $D_{\bullet}^{\bullet}(L)$ by setting 
\begin{equation}\label{ddef}
D^l_k(L)=\bigoplus_{\smallsimplex \in L_l} C_k(X,X-\ostar \smallsimplex).	
\end{equation}
We define the differentials below. Figure~\ref{mvdoublecomplexfigure} sketches this double complex and relevant maps, some of which will be defined in later sections. In what follows we will suppress $L$ from the notation and just write $D_k^l$. Cf. Notation~\ref{undernotation} we denote the generators of $C_k(X,X-\ostar \smallsimplex)= C_k(X)/C_{k}(X-\ostar \smallsimplex)$ by $\smallsimplex\under\bigsimplex$ with $\bigsimplex\in X_k$ containing $\smallsimplex \in L_l$. We use the convention that $\smallsimplex\under\bigsimplex=0$ if $\bigsimplex$ does not contain $\smallsimplex$, and $\smallsimplex\under \bigsimplex=0$ in $D^l_k$ if $\smallsimplex\notin L_l$. For $\shcochain \in D^l_k$ we write, with coefficients $\shcochain_\bigsimplex^\smallsimplex\in R$,
$$
\shcochain = \sum_{\smallsimplex \in L_l}\sum_{\bigsimplex\geq \smallsimplex} \shcochain_\bigsimplex^\smallsimplex\, \smallsimplex\under\bigsimplex.
$$
The grading on $D_{\bullet}^{\bullet}$ is cohomological in the horizontal direction and homological in the vertical direction, this is also reflected in the position of the indices.

\begin{figure}[h]\label{mvdoublecomplexfigure}
	\begin{tikzcd}[column sep=small, row sep=tiny]
		0			&\dots 	&0				& 0			&0			&		& 0		& 0		\\
		D^0_n 		&\dots 	&D^{l-1}_n 		& D^l_n 	&D^{l+1}_n	&\dots	&D^n_d	&0		\\
		\vdots		&		&\vdots			&\vdots		&\vdots		&		&\vdots	&\vdots	\\
		D^0_k 		&\dots	&D^{l-1}_k		&D^l_k\arrow[d,"d_k"]\arrow[r,"i^l"]\arrow[l,"\lambda"']\arrow[ld,"\Lambda"]	&D^{l+1}_k	&\dots	& D^d_k	&	0\\
		D^0_{k-1}	&\dots	&D^{l-1}_{k-1}	&D^l_{k-1}	&\dots		&		&	\vdots	&\vdots	\\
		\vdots		&		&\vdots			&\vdots		&			&		&D^d_{d}	&0		\\
		&		&				&			&			&\iddots	&\iddots&		\\
		\vdots		&		&				&\vdots		&			&\iddots	&		&		\\
		D^0_{l}		&		&				&D^l_l		&	0		&		&		&		\\
		\vdots		&		&		\iddots	&	0		&			&		&		&		\\
		\vdots		&\iddots	&	\iddots		&			&			&		&		&		\\		
		D^0_0		&\iddots&				&			&			&		&		&		\\
		0			&		&				&			&			&		&		&
	\end{tikzcd}	
	\caption{The MV double complex $D(L)$ of a $d$ dimensional subcomplex $L$ of an $n$-dimensional complex $X$ with the differentials $i^l$ and $d_k$, the last vertex lift $\lambda$, and the diagonal shift $\Lambda$.}
\end{figure}

The horizontal differential $i^{l}\colon D^l_k \rightarrow D^{l+1}_k$ is induced by the inclusions $(X,X-\ostar \smallsimplex_{\langle j\rangle})\hookrightarrow (X,X-\ostar \smallsimplex$), where $\smallsimplex_{\langle j\rangle}$ denotes the $j$th face of $\smallsimplex$. Similarly to the differential for sheaf cohomology from Equation~\eqref{sheafboundary}, it is given by
$$
i^{l}\shcochain= \sum_{\smallsimplex \in L_{l+1}}\sum_{\bigsimplex\geq \smallsimplex}\sum_{j=0}^{l+1}(-1)^j \shcochain_\bigsimplex^{\smallsimplex_{\langle j \rangle}} \smallsimplex\under\bigsimplex.
$$
Note for future use that the coefficient of $\smallsimplex\under\bigsimplex$ in $i^l\shcochain$ is
\begin{equation}\label{horizontaldiffcoef}
	\left(i^l\shcochain\right)_\bigsimplex^\smallsimplex=\sum_{j=0}^{l+1}(-1)^j\shcochain^{\smallsimplex_{\langle j\rangle}}_\bigsimplex.
\end{equation}

The vertical differentials $d_k$ are direct sums of the differentials on $C_*(X,X-\ostar \smallsimplex)$. For $\shcochain\in D^l_k$ this means
$$
d_k \shcochain= \sum_{\smallsimplex \in L_l}\sum_{\bigsimplex\in X_k} \sum_{j=0}^{k}(-1)^{j}\shcochain^\smallsimplex_{\bigsimplex}\smallsimplex \under \bigsimplex_{\langle j \rangle},
$$
so the coefficient of $\smallsimplex \under\bigsimplex$ for $\bigsimplex \in X_{k-1}$ in the differential is
\begin{equation}\label{vertdiffcoefs}
	(d_k \shcochain)^\smallsimplex_\bigsimplex= \sum_{j=0}^{k}\sum_{\bigsimplexone_{\langle j \rangle}=\bigsimplex}(-1)^{j}\shcochain^\smallsimplex_{\bigsimplexone}.
\end{equation}

As the horizontal differentials are induced by maps of spaces they commute with the vertical differentials, so $D^\bullet_\bullet$ is indeed a double complex. The associated \emph{total complex} $\mathbf{Tot}D_\bullet$ is the chain complex of $R$-bimodules with chain modules $\mathbf{Tot}D_j$ obtained by summing over the diagonals $k-l=j$. We equip this with the differential that on the $D^l_k$ summand is given by
\begin{equation}\label{totaldifferentialD}
	d^{D,l}_k=(-1)^{k-l}d_k+ i^l.	
\end{equation}
Here $(-1)^l$ is the usual sign in the differential of a total complex. The $(-1)^k$ is a non-standard convention we use here to circumvent sign issues, and corresponds to the sign in the differential in Proposition~\ref{capischainmap}. It does not change the homology of the total complex.

\begin{remark}
	The Mayer--Vietoris double complex is a variant of the double complex that gives rise to the Mayer--Vietoris spectral sequence (see \cite[Chapter II.8]{Tu1982}).
\end{remark}

We need a standard fact about the kernels of differentials in double complexes:
\begin{lemma}\label{kerisubcomplex}
	Fix $l$. The kernel $\ker i^l$ of $i^l$ equipped with the restriction of the vertical differentials is a subcomplex of $\mathbf{Tot}D$.
\end{lemma}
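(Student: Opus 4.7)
The plan is to show that $\ker i^l$, viewed as a graded submodule $\bigoplus_k \ker\bigl(i^l\colon D^l_k \to D^{l+1}_k\bigr)$ of $\mathbf{Tot}D$, is closed under the total differential. Since this is a subcomplex concentrated in a single column of the double complex, the only work is to check that the vertical differential preserves it.

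First I would take an arbitrary $\shcochain \in \ker i^l$ living in $D^l_k$ and compute $d^{D,l}_k \shcochain$ using Equation~\eqref{totaldifferentialD}. By definition of the total differential this gives
\[
d^{D,l}_k\shcochain = (-1)^{k-l}\, d_k\shcochain + i^l\shcochain,
\]
and the second summand is zero by hypothesis. So the total differential of $\shcochain$ lives in $D^l_{k-1}$ and equals $(-1)^{k-l}\,d_k\shcochain$. It remains to verify that $d_k\shcochain \in \ker i^l$.

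Next I would invoke the defining property of a double complex: the horizontal and vertical differentials commute, $i^l \circ d_k = d_k \circ i^l$, as already remarked in the construction of $D^\bullet_\bullet$ (the horizontal differential is induced by an inclusion of pairs of spaces, hence commutes with the simplicial boundary; this can also be checked on coefficients via Equations~\eqref{horizontaldiffcoef} and \eqref{vertdiffcoefs}). Applying this identity to $\shcochain$ gives
\[
i^l(d_k\shcochain) = d_k(i^l\shcochain) = 0,
\]
so $d_k\shcochain \in \ker i^l$, as required.

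There is no real obstacle here; the statement is essentially the standard fact that the kernel of one differential in a double complex is stable under the other, and the only subtlety is bookkeeping with the sign $(-1)^{k-l}$ coming from the non-standard convention in Equation~\eqref{totaldifferentialD}. Since that sign only rescales the vertical differential within the column and does not alter whether an element is a cycle or a boundary for that differential, the argument goes through unchanged.
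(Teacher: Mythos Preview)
Your proof is correct. The paper does not actually give a proof of this lemma; it introduces it as ``a standard fact about the kernels of differentials in double complexes'' and leaves it at that, so your argument is the natural way to fill in what the paper omits.
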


\subsubsection{The column spectral sequence of the MV double complex}\label{mvcolmnspecsecsec}
Associated to a double complex are two spectral sequences, both of which abut to the homology of the total complex.
We call the spectral sequence obtained by taking the vertical differentials of $D^\bullet_\bullet$ first the \emph{column spectral sequence} $\mathbf{Col}D$. The $E^1$-page of this is
\begin{equation}\label{E1D}
\mathbf{Col}D^{1,l}_{k}= \bigoplus_{\smallsimplex \in L_l} h^X_k(\smallsimplex),	
\end{equation}
where $h^X_k$ denotes the degree $k$ part of the local homology sheaf of $X$. The differentials on the $E^1$-page are exactly the sheaf cohomology differentials from Equation~\eqref{sheafcohomchains}, so the $E^2$-page is
$$
\mathbf{Col}D^{2,l}_{k}= H_c^l(L; h^X_k|_L).
$$

The following observation makes it clear why this double complex is relevant:

\begin{prop}\label{CMspecseqcollapse}
	If $X$ is $n$-dimensional locally Cohen--Macaulay at $L$ the spectral sequence $\mathbf{Col}D(L)$ collapses on the $E^2$-page to give
	$$
	H_{n-l}(\mathbf{Tot}D(L))\cong H_c^l(L; h^X_n|_L).
	$$
\end{prop}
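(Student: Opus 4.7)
The plan is to use the local Cohen--Macaulay hypothesis to force the $E^1$-page of $\mathbf{Col}D(L)$ to be concentrated in a single row, so that the spectral sequence collapses at the $E^2$-page for bidegree reasons, and then to conclude via convergence (which is available because the double complex is bounded).

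First I would unpack the hypothesis. By definition, $X$ being locally CM of dimension $n$ at $L$ means $h^X_k(\smallsimplex)=0$ for every $\smallsimplex\in L$ and every $k\neq n$. Substituting into Equation~\eqref{E1D} gives $\mathbf{Col}D^{1,l}_k=0$ unless $k=n$. Along the remaining row $k=n$ the $E^1$-differential is the compactly supported sheaf cohomology differential for $h^X_n|_L$, as already noted in the paragraph preceding the proposition, so $\mathbf{Col}D^{2,l}_n=H^l_c(L;h^X_n|_L)$ and the $E^2$-page is again concentrated in the row $k=n$.

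Next I would observe that $E^2=E^\infty$ for bidegree reasons. For $r\geq 2$, the column-spectral-sequence differential $d_r$ shifts bidegree by $(l,k)\mapsto (l+r,k+r-1)$, so it increases the row index by $r-1\geq 1$. Starting from the only non-vanishing row $k=n$, the target lies in row $k\geq n+1$, which is zero (since $D^l_k=0$ for $k>n$ as $X$ is $n$-dimensional), and hence $d_r=0$. For convergence I would simply note that $D^\bullet_\bullet(L)$ is a bounded bicomplex: $D^l_k=0$ unless $0\leq l\leq k\leq n$, because $C_k(X,X-\ostar\smallsimplex)$ is generated by $k$-simplices containing the $l$-simplex $\smallsimplex$ (so it vanishes for $k<l$) and $X$ is $n$-dimensional. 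The column spectral sequence therefore converges strongly to $H_\ast(\mathbf{Tot}D(L))$, and since $E^\infty$ has at most one non-zero entry in each total degree $j=k-l$, the induced filtration on the limit has a single non-trivial subquotient per degree, yielding
$$
H_{n-l}(\mathbf{Tot}D(L))\cong E^\infty_{l,n}=H^l_c(L;h^X_n|_L).
$$
The only real obstacle is notational: keeping the mixed cohomological-in-$l$, homological-in-$k$ bigrading straight and verifying that the non-standard sign convention in Equation~\eqref{totaldifferentialD} does not affect the $E^1$-differential or the convergence (it does not, since it is merely a bidegree-dependent rescaling of the summand inclusions $D^l_k\hookrightarrow\mathbf{Tot}D$).
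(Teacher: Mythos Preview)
Your proposal is correct and takes essentially the same approach as the paper: the paper's proof is the single sentence ``Note that if $X$ is locally CM at $L$, only the $n$th row survives to the $E_1$-page, and this row is exactly the cochain complex that computes $H_c^l(L; h^X_n|_L)$,'' and you have simply spelled out the bidegree and boundedness details that the paper leaves implicit.
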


\begin{proof}
	Note that if $X$ is locally CM at $L$, only the $n$th row survives to the $E_1$-page, and this row is exactly the cochain complex that computes $H_c^l(L; h^X_n|_L)$.
\end{proof}

\subsubsection{The augmented Mayer--Vietoris complex and the last vertex lift}\label{augmvsec}
We will now show that the homology of $\mathbf{Tot}D_\bullet$ is isomorphic to $H_*(X,L\vc)$ (regardless of whether $X$ is locally CM). For each $k$ the rows of the double complex $D^\bullet_k$ admit augmentations by $C_{k}(X,L^\textnormal{vc})$ that are compatible with the vertical differentials:

\begin{proposition}\label{doubleexactrows}
	Let $L$ be a full subcomplex of a locally finite oriented simplicial complex $X$ with $L\vc$ ordered before $L$ (see Definition~\ref{vcandbeforedef}). Let $D^l_\bullet$ denote the $l$-th column of $D^\bullet_\bullet(L)$ equipped with the vertical differentials $d_k$. Then the following sequence of chain complexes is exact:
	$$
	0\rightarrow C_\bullet(X,L^\textnormal{vc}) \xrightarrow{\epsilon} D_\bullet^0 \xrightarrow{i^0} D_\bullet^1 \xrightarrow{i^1} \cdots,
	$$
	where $\epsilon$ is the product of the maps $C_{\bullet}(X,L^\textnormal{vc})\rar C_{\bullet}(X,X-\ostar v)$ of chain complexes induced by the inclusions of pairs $(X,L^\textnormal{vc})\rar (X,X-\ostar v)$ for all $v\in L_0$. 
\end{proposition}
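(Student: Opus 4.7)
The plan is to establish row-wise exactness and decompose each row by top simplex into a direct sum of augmented simplicial cochain complexes of simplices.

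For each $k \geq 0$, I want exactness of
$$
0 \to C_k(X, L\vc) \xrightarrow{\epsilon} D_k^0 \xrightarrow{i^0} D_k^1 \xrightarrow{i^1} \cdots.
$$
Exactness as a sequence of chain complexes reduces to degree-wise exactness, since $\epsilon$ is a chain map (induced by a map of pairs) and the $i^l$ commute with the vertical differentials $d_k$ by construction of $D^\bullet_\bullet$. For each $k$-simplex $\alpha$ of $X$, let $V_\alpha \subseteq \alpha$ be the face spanned by those vertices of $\alpha$ lying in $L$; since $L$ is full, $V_\alpha$ is a simplex of $L$ with vertex order inherited from $\alpha$, and $V_\alpha \neq \emptyset$ precisely when $\alpha \notin L\vc$. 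Now I would decompose
$$
D_k^l \;=\; \bigoplus_\alpha (D_k^l)_\alpha, \qquad (D_k^l)_\alpha := \bigoplus_{\sigma \in L_l,\, \sigma \leq \alpha} R \cdot (\sigma \under \alpha),
$$
which identifies $(D_k^l)_\alpha$ with the free $R$-module on the $l$-faces of $V_\alpha$ (so $(D_k^l)_\alpha = 0$ whenever $V_\alpha = \emptyset$). Similarly $C_k(X,L\vc)$ has a basis indexed by the $\alpha$ with $V_\alpha \neq \emptyset$.

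A direct inspection of the formulas for $\epsilon$ and $i^l$ shows they respect this decomposition: $\epsilon(\alpha) = \sum_{v \in V_\alpha} v \under \alpha$, and under the identification $(D_k^\bullet)_\alpha \cong C^\bullet(V_\alpha; R)$ (with the orientation inherited from $\alpha$), the restriction of $i^l$ is the standard simplicial coboundary of $V_\alpha$. The row therefore splits as a direct sum, over the $k$-simplices $\alpha$ with $V_\alpha$ nonempty, of the augmented simplicial cochain complex of the nonempty simplex $V_\alpha$; each such summand is acyclic because a simplex has trivial reduced cohomology, and summing gives the desired exactness. I don't anticipate a real obstacle: the only mildly delicate step is verifying that the signs in $i^l$ match those of the simplicial coboundary on $V_\alpha$, which reduces to the observation that all relevant vertex orderings are inherited from $\alpha$. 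The ``$L\vc$ before $L$'' hypothesis plays no role in this particular proposition and is presumably in place for the cap product computations later in the section.
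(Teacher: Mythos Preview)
Your argument is correct and is a genuinely different route from the paper's. The paper proves exactness by exhibiting an explicit contracting homotopy: it introduces the \emph{last vertex lift} $\lambda$ (Definition~\ref{lastvertexliftdef}) and an auxiliary map $\kappa\colon D^0_k\to C_k(X,L\vc)$, and shows via Lemmas~\ref{commilambda} and~\ref{kappaepsilon} that $\tilde\lambda=(\lambda,\kappa)$ is a null-homotopy of the identity on the augmented row. Your decomposition by ambient simplex $\alpha$ into augmented cochain complexes of the simplices $V_\alpha$ is cleaner for this statement in isolation, and you are right that it does not use the hypothesis that $L\vc$ is ordered before $L$ (whereas the paper's $\lambda$ genuinely does: without that ordering the last vertex $\alpha_k$ need not lie in $L$, and the homotopy identity $\lambda i+i\lambda=\id$ fails on the corresponding $\alpha$-components).

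The trade-off is that the paper's approach is load-bearing for what follows. The maps $\lambda$ and $\kappa$ are not just a device to prove exactness here: they are reused to define the diagonal shift $\Lambda=\id-\lambda d^D-d^D\lambda$ and to establish the explicit chain-level inverse $\dcap=\kappa\Lambda^m$ to $\epsilon$ in Theorem~\ref{standardexactrowspecseq} and the Python Proposition~\ref{pythonlemma}, which is the key step linking the spectral-sequence isomorphism to the cap product with the fundamental class. Your proof yields exactness but gives no such explicit splitting data, so if you adopt it you would still need the paper's $\lambda,\kappa$ machinery (and hence the ordering hypothesis) later.
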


Note that as $\epsilon$ is induced by maps of spaces it is a chain map. In the proof of this proposition, we will use an explicit splitting of the differential $i^l$. We will also make use of this splitting in finding our explicit inverse to the augmentation $\epsilon$ in \textsection\ref{explicitinversesect}. It is defined as follows:

\begin{definition}\label{lastvertexliftdef}
	Let $L\subset X$ be a subcomplex so that $L\vc$ is before $L$. Then the \emph{last-vertex lift} $\lambda\shcochain\in D_k^{l-1}(L)$ of $\shcochain\in D_k^l(L)$ is the chain with coefficients 
	\begin{equation}\label{lastvertexlift}
		(\lambda\shcochain)^\smallsimplex_\bigsimplex=(-1)^l\shcochain^{\smallsimplex\star \bigsimplex_{k}}_\bigsimplex. 
	\end{equation}
	Declaring $\lambda \shcochain=0$ if $\shcochain \in D^0_k$, this defines a degree -1 map 
	\begin{align*}
		\lambda\colon &D^{l}_k \rar D^{l-1}_k\\	
					& \shcochain \mapsto \lambda \shcochain.
	\end{align*}
\end{definition}

On a generator $\smallsimplex \under \bigsimplex \in D^l_k$, it is a worthwhile exercise to check that
\begin{equation}\label{lambdaongen}
\lambda(\smallsimplex\under \bigsimplex)= \begin{cases}
	(-1)^l\smallsimplex_{\leq l-1} \under \bigsimplex & \mbox{ if }\smallsimplex_{l}=\bigsimplex_k\\
	0& \mbox{ otherwise.}
\end{cases}	
\end{equation}

For the proof of Proposition~\ref{doubleexactrows} the crucial property of $\lambda$ is:

\begin{lemma}\label{commilambda}
	For $l>0$ and $\phi \in D^l_k$ the last vertex lift satisfies
	\begin{equation}\label{icommutatator}
		\lambda i \shcochain + i \lambda \shcochain =\shcochain.
	\end{equation}	
	For $l=0$ and a generator $\bigsimplex_m\under \bigsimplex\in D^0_k$ we have
	$$
	\lambda i (\bigsimplex_m \under \bigsimplex)= \begin{cases}
		- \sum_{j=0}^{k-1}\bigsimplex_j\under \bigsimplex & \mbox{ for }m=k\\ \bigsimplex_m \under \alpha &\mbox{ otherwise.}
	\end{cases}
	$$
\end{lemma}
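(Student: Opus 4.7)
The plan is to verify both statements by direct computation on basis elements $\smallsimplex\under\bigsimplex$, exploiting the orientation convention that makes $\bigsimplex_k$ (the last vertex of $\bigsimplex$) a canonical ``apex'' for the face poset of $\bigsimplex$ restricted to $L$. The key geometric observation is that since $L\vc$ is before $L$ and $\smallsimplex\leq\bigsimplex$ with $\smallsimplex\in L$, the vertex $\bigsimplex_k$ lies in $L$ (by Lemma~\ref{basicorderingconsequences}) and is the maximum vertex of $\smallsimplex\cup\{\bigsimplex_k\}$. Consequently, when $\bigsimplex_k\notin\smallsimplex$, the simplex $\smallsimplextwo:=\smallsimplex\star\bigsimplex_k\in L_{l+1}$ has last vertex $\bigsimplex_k$, so $\smallsimplextwo_{\langle l+1\rangle}=\smallsimplex$, while for $0\leq j\leq l$ we have $\smallsimplextwo_{\langle j\rangle}=\smallsimplex_{\langle j\rangle}\star\bigsimplex_k$. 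Morally, $\lambda$ is the standard cone-over-$\bigsimplex_k$ contracting homotopy for the horizontal complex, restricted to the summand indexed by $\bigsimplex$.

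For $l>0$, fix generators $\smallsimplex\under\bigsimplex$ and split into two cases according to whether $\bigsimplex_k\in\smallsimplex$. In the ``generic'' case $\bigsimplex_k\notin\smallsimplex$, expand $(\lambda i\shcochain)^\smallsimplex_\bigsimplex$ using Equation~\eqref{horizontaldiffcoef} applied at $\smallsimplextwo=\smallsimplex\star\bigsimplex_k$, and $(i\lambda\shcochain)^\smallsimplex_\bigsimplex$ using Equation~\eqref{lastvertexlift} at each face $\smallsimplex_{\langle j\rangle}$. The geometric observation identifies the terms $\smallsimplex_{\langle j\rangle}\star\bigsimplex_k$ appearing in $i\lambda\shcochain$ with the faces $\smallsimplextwo_{\langle j\rangle}$ for $j\leq l$ appearing in $\lambda i\shcochain$; the signs from $(-1)^l$ and $(-1)^{l+1}$ cancel these matched terms, leaving only the $j=l+1$ contribution of $\lambda i\shcochain$, which is $(-1)^{l+1}(-1)^{l+1}\shcochain^\smallsimplex_\bigsimplex=\shcochain^\smallsimplex_\bigsimplex$. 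In the ``degenerate'' case $\bigsimplex_k\in\smallsimplex$, maximality forces $\smallsimplex_l=\bigsimplex_k$, so $\smallsimplex\star\bigsimplex_k=\smallsimplex$ has the wrong dimension and $(\lambda i\shcochain)^\smallsimplex_\bigsimplex=0$; meanwhile among the terms of $(i\lambda\shcochain)^\smallsimplex_\bigsimplex$ only $j=l$ gives a non-degenerate join $\smallsimplex_{\langle l\rangle}\star\bigsimplex_k=\smallsimplex$, contributing $(-1)^l(-1)^l\shcochain^\smallsimplex_\bigsimplex=\shcochain^\smallsimplex_\bigsimplex$ as required.

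For $l=0$, since $\lambda$ vanishes on $D^0_k$ by definition, only $\lambda i(\bigsimplex_m\under\bigsimplex)$ needs computing. Using Equation~\eqref{horizontaldiffcoef}, for $\smallsimplex\in L_1$ the coefficient $(i(\bigsimplex_m\under\bigsimplex))^\smallsimplex_\bigsimplexone$ vanishes unless $\bigsimplexone=\bigsimplex$ and equals $\delta_{\smallsimplex_1,\bigsimplex_m}-\delta_{\smallsimplex_0,\bigsimplex_m}$. For a vertex $v\leq\bigsimplex$ with $v\neq\bigsimplex_k$, the join $v\star\bigsimplex_k$ is ordered as $(v,\bigsimplex_k)$, and plugging into Equation~\eqref{lastvertexlift} splits into two subcases: when $m=k$, only the $\delta_{\smallsimplex_1,\bigsimplex_k}$ term survives and each $v=\bigsimplex_j$ with $j<k$ contributes $-\bigsimplex_j\under\bigsimplex$; when $m<k$, only the $\delta_{\smallsimplex_0,\bigsimplex_m}$ term survives and yields $+\bigsimplex_m\under\bigsimplex$ (so the statement's ``$\bigsimplex_m\under\alpha$'' should read ``$\bigsimplex_m\under\bigsimplex$''). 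The main obstacle throughout is not difficulty but keeping the sign bookkeeping straight and correctly handling the convention that $\shcochain^\smallsimplexone_\bigsimplex$ is zero whenever $\smallsimplexone$ is a degenerate join, which is what makes the two cases for $l>0$ disjoint and exhaustive.
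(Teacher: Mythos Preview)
Your proof is correct and follows essentially the same direct computational approach as the paper: both compute $(\lambda i\shcochain)^\smallsimplex_\bigsimplex$ and $(i\lambda\shcochain)^\smallsimplex_\bigsimplex$ from the definitions and observe the cancellation. The only difference is organizational: you split the $l>0$ case explicitly according to whether $\bigsimplex_k\in\smallsimplex$, whereas the paper runs a single formal computation relying on the convention that $\shcochain^{\smallsimplexone}_\bigsimplex=0$ whenever the join $\smallsimplexone$ is degenerate or of the wrong dimension (so both sides of the intermediate identity vanish appropriately in your ``degenerate'' case). Your remark about ``$\bigsimplex_m\under\alpha$'' is a non-issue: in the paper's macros $\bigsimplex$ expands to $\alpha$, so the two are literally the same symbol.
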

\begin{proof}
	This is a direct computation. For $l>0$, note that if $\shcochain\in D^l_k$, then $i\shcochain\in D^{l+1}_k$ and $\lambda \shcochain \in D^{l-1}_k$. Let $\smallsimplex\under \bigsimplex \in D^l_k$ be a generator. Then the component $(\lambda i \shcochain)^\smallsimplex_\bigsimplex $ of $\lambda i \shcochain$ for $\shcochain=\sum \phi^\smallsimplexone_\bigsimplexone \smallsimplexone \under \bigsimplexone$ at this generator is
	\begin{equation*}
		(\lambda i \shcochain )^\smallsimplex_\bigsimplex= (-1)^{l+1}(i \shcochain)^{\smallsimplex\star \bigsimplex_k}_\bigsimplex
															= \sum_{j=0}^{l+1}(-1)^{l+1+j} \shcochain_{\bigsimplex}^{(\smallsimplex \star \bigsimplex_k)_{\langle j \rangle}}
															= \sum_{j=0}^{l}(-1)^{j+l+1} \shcochain_{\bigsimplex}^{\smallsimplex_{\langle j \rangle} \star \bigsimplex_k}+\shcochain_{\bigsimplex}^{\smallsimplex },
	\end{equation*}
	where we used Equation~\eqref{horizontaldiffcoef} in the second equality, and the fact that the $l+1$th face of $\smallsimplex\star \bigsimplex_k$ is $\smallsimplex$ in the last. For $ i \lambda \shcochain$ we find that the corresponding coefficient is
	\begin{equation*}
		(i\lambda  \shcochain )^\smallsimplex_\bigsimplex	= \sum_{j=0}^{l}(-1)^j (\lambda \phi)^{\smallsimplex_{\langle j \rangle}}_\bigsimplex= \sum_{j=0}^{l}(-1)^{j+l} \phi^{\smallsimplex_{\langle j \rangle}\star \bigsimplex_k}_\bigsimplex.
	\end{equation*}
	Adding these together gives the desired result.
	
	For $l=0$, let $\bigsimplex_m \under \bigsimplex \in D^0_k$. Using Equation~\eqref{horizontaldiffcoef} we see that only non-degenerate edges of $\bigsimplex$ contribute to the differential, with positive sign exactly when $\bigsimplex_m$ is the last vertex (equivalently zeroth face) of the edge. This gives:
	$$
	i(\bigsimplex_m \under \bigsimplex)= \sum_{j=0}^{m-1}(\bigsimplex_j \star \bigsimplex_m)\under \bigsimplex-\sum_{j=m+1}^k(\bigsimplex_m \star \bigsimplex_j)\under \bigsimplex. 
	$$
	Applying $\lambda$ to this gives
	\begin{align*}
		\lambda i (\bigsimplex_m \under \bigsimplex) 
		& = \begin{cases}
			- \sum_{j=0}^{k-1}\bigsimplex_j\under \bigsimplex & \mbox{ for }m=k\\ \bigsimplex_m \under \alpha &\mbox{ otherwise.}
		\end{cases}
	\end{align*}
	This finishes the proof.
\end{proof}

At the left end of the augmented chain complex we will use the map
\begin{align}\begin{split}\label{kappadef}
			\kappa\colon & D^0_k \rar C_k(X;L\vc)\\
			& \bigsimplex_m \under \bigsimplex \mapsto \delta_{m,k}\bigsimplex,
		\end{split}
\end{align}
where $\delta_{m,k}$ denotes the Kronecker delta. For this map we have:
\begin{lemma}\label{kappaepsilon}
	For $\kappa$ as defined above we have
	\begin{align*}
		\epsilon\kappa &=\id- \lambda i^0\\
		\kappa \epsilon&=\id,
	\end{align*}
	where $\epsilon$ is the augmentation map from Proposition~\ref{doubleexactrows}.
\end{lemma}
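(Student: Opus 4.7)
The strategy is to verify both identities on generators, using that everything in sight is $R$-linear. First I would unpack the map $\epsilon$: since it is the product of the chain maps induced by the inclusions of pairs $(X,L\vc)\to (X,X-\ostar v)$ over all $v\in L_0$, on a simplex $\bigsimplex\in X_k$ (viewed as a generator of $C_k(X,L\vc)$, so that $\bigsimplex\notin L\vc$) one has
$$
\epsilon(\bigsimplex)=\sum_{j:\ \bigsimplex_j\in L} \bigsimplex_j\under\bigsimplex\ \in\ D^0_k.
$$
The standing convention that $L\vc$ is before $L$ (Definition~\ref{vcandbeforedef}) together with $\bigsimplex\notin L\vc$ guarantees the last vertex $\bigsimplex_k$ lies in $L$, and that the set $\{j:\bigsimplex_j\in L\}$ is a nonempty suffix of $\{0,\dots,k\}$.

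For the identity $\kappa\epsilon=\id$, I would apply $\kappa$ termwise to the formula for $\epsilon(\bigsimplex)$: by definition $\kappa(\bigsimplex_j\under\bigsimplex)=\delta_{j,k}\bigsimplex$, so only the $j=k$ summand survives, and it survives because $\bigsimplex_k\in L$ as noted above. This yields $\kappa\epsilon(\bigsimplex)=\bigsimplex$.

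For $\epsilon\kappa=\id-\lambda i^0$, I would evaluate both sides on a generator $\bigsimplex_m\under\bigsimplex\in D^0_k$ and split into cases $m=k$ and $m<k$. When $m<k$, $\kappa(\bigsimplex_m\under\bigsimplex)=0$, while Lemma~\ref{commilambda} gives $\lambda i^0(\bigsimplex_m\under\bigsimplex)=\bigsimplex_m\under\bigsimplex$, so both sides vanish. When $m=k$, $\kappa(\bigsimplex_k\under\bigsimplex)=\bigsimplex$ and Lemma~\ref{commilambda} gives $\lambda i^0(\bigsimplex_k\under\bigsimplex)=-\sum_{j=0}^{k-1}\bigsimplex_j\under\bigsimplex$; as before, summands with $\bigsimplex_j\notin L$ are zero in $D^0_k$, so this equals $-\sum_{j:\bigsimplex_j\in L, j<k}\bigsimplex_j\under\bigsimplex$. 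Combining with $\bigsimplex_k\under\bigsimplex$ on the left of the identity and the formula for $\epsilon(\bigsimplex)$ above matches the two sides.

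The only subtle point, which I expect to be the main obstacle, is the bookkeeping around the ordering convention: one must consistently use that $L\vc$ is before $L$ to conclude both that $\bigsimplex_k\in L$ whenever $\bigsimplex\in C_k(X,L\vc)$ is nonzero and that terms indexed by $\bigsimplex_j\notin L$ vanish in $D^0_k$. Beyond that, the verification is a direct application of the already-established Lemma~\ref{commilambda} and the definitions of $\kappa$ and $\epsilon$.
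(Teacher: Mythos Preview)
Your proposal is correct and follows essentially the same approach as the paper: both verify the identities on generators $\bigsimplex_m\under\bigsimplex\in D^0_k$ and $\bigsimplex\in C_k(X,L\vc)$, invoking the $l=0$ case of Lemma~\ref{commilambda} for the computation of $\lambda i^0$ and splitting into the cases $m=k$ and $m\neq k$. Your write-up is slightly more explicit about where the convention ``$L\vc$ before $L$'' is used (to ensure $\bigsimplex_k\in L$ and to discard terms with $\bigsimplex_j\notin L$), but this is a presentational difference rather than a different argument.
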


\begin{proof}
	This is another direct computation. The map $\epsilon$ gives on a $k$-simplex $\bigsimplex\in X_k$
	$$
		\epsilon(\bigsimplex)=\sum_{j=0}^{k}\bigsimplex_j\under\bigsimplex,
	$$
	where $\bigsimplex_j$ denotes the $j$th vertex of $\bigsimplex$. So for a generator $\bigsimplex_m\under \bigsimplex$ we get
	$$
		\epsilon\kappa (\bigsimplex_m\under \bigsimplex)= \begin{cases}
		\sum_{j=0}^{k} \bigsimplex_j \under \bigsimplex & \mbox{ for }m=k\\ 0 &\mbox{ otherwise.}
	\end{cases}
	$$
	So we have, using Lemma~\ref{commilambda},
	\begin{equation*}
	 	(\epsilon \kappa+ \lambda i^0)(\bigsimplex_m\under \bigsimplex)=\begin{cases} \sum_{j=0}^{k} \bigsimplex_j \under \bigsimplex - \sum_{j=0}^{k-1} \bigsimplex_j \under \bigsimplex & \mbox{ for }m=k\\ 0+ \bigsimplex_m\under \bigsimplex &\mbox{ otherwise} \end{cases} = \bigsimplex_m \under \bigsimplex.
	\end{equation*}
	For the second equality, on $\bigsimplex \in X_k-L_k$ we have 
	$$
	 \kappa \epsilon (\bigsimplex)= \kappa(\sum_{j=0}^{k}\bigsimplex_j\under \bigsimplex)= \bigsimplex.
	$$
	This finishes the proof.
\end{proof}

\begin{proof}[Proof of Proposition~\ref{doubleexactrows}]
	Fix $k$. We will show that the $k$th row is exact by showing that the identity on the augmented cochain complex 
	$$
	\tilde{C}^\bullet = 	0\rightarrow C_k(X,L^\textnormal{vc}) \xrightarrow{\epsilon} D_k^0 \xrightarrow{i^0} D_k^1 \xrightarrow{i^1} \cdots,
	$$
	is null homotopic. Write $\tilde{i}$ for the differential on the augmented chain complex. Our null-homotopy $\tilde{\lambda}\colon \tilde{C}\rar \tilde{C}[-1]$ is defined by setting $\tilde{\lambda}=\lambda$ on $D^l_k$ with $l>0$. On $D^0_l$ we set $\tilde{\lambda}=\kappa $, note that necessarily $\tilde{\lambda}=0$ on $C_k(X, L\vc)$. Then we have
	\begin{equation}\label{lambdanullhty}
	\shcochain=\tilde{\lambda} \tilde{i}\shcochain+ \tilde{i}\tilde{\lambda}\shcochain
	\end{equation}
	for any $\shcochain \in \tilde{C}^\bullet$. For $l>0$ this is Lemma~\ref{commilambda}, for $l=0$ this uses the first equation from Lemma~\ref{kappaepsilon}, and for $l=-1$ this is the second equation from Lemma~\ref{kappaepsilon}. 
\end{proof}	

We can deduce that $\epsilon$ is a chain map onto the subcomplex (Lemma~\ref{kerisubcomplex}) $\ker i^0\subset \mathbf{Tot} D$. The map $\kappa$ provides an inverse chain map in the other direction:
\begin{lemma}\label{kappachainmap}
	The map $\kappa$ from Equation~\eqref{kappadef} restricts to a chain map $$\kappa|_{\ker i^0}\colon \ker i^0\rar C_\bullet (X,L\vc).$$ inverse to $\epsilon$.
\end{lemma}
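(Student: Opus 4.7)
The plan is to derive the lemma from Lemma~\ref{kappaepsilon} and the exactness statement of Proposition~\ref{doubleexactrows}. Lemma~\ref{kappaepsilon} already supplies the two identities $\kappa\epsilon = \id$ and $\epsilon\kappa = \id - \lambda i^0$. Restricting the second identity to $\ker i^0$ kills the $\lambda i^0$ term, giving $\epsilon\kappa|_{\ker i^0} = \id_{\ker i^0}$. Combined with $\kappa\epsilon = \id$, and the fact from Proposition~\ref{doubleexactrows} that $\epsilon$ injects $C_\bullet(X,L\vc)$ with image exactly $\ker i^0$, this already exhibits $\epsilon$ and $\kappa|_{\ker i^0}$ as mutually inverse graded-module isomorphisms.

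First I would briefly confirm that $\kappa$ does land inside $C_\bullet(X,L\vc)$. The map sends a generator $\bigsimplex_m \under \bigsimplex \in D^0_k$ to $\delta_{m,k}\bigsimplex$, so the only nonzero contributions come from $\bigsimplex_k \in L_0$. Since $L\vc$ is before $L$, any such $\bigsimplex$ has a vertex (namely $\bigsimplex_k$) in $L$, and so represents a nonzero class in $C_k(X,L\vc) = C_k(X)/C_k(L\vc)$.

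The chain map property then comes almost automatically from the fact that $\epsilon$ is itself a chain map (it is induced by inclusions of simplicial pairs, as remarked after Proposition~\ref{doubleexactrows}), so $d_k\epsilon = \epsilon\partial$. For $x \in \ker i^0$ we may write $x = \epsilon\kappa(x)$ and compute
\begin{equation*}
\kappa d_k(x) \;=\; \kappa d_k \epsilon\kappa(x) \;=\; \kappa\epsilon\partial\kappa(x) \;=\; \partial\kappa(x),
\end{equation*}
using in the middle step that $\epsilon$ intertwines $\partial$ and $d_k$, and in the last step $\kappa\epsilon = \id$. Thus $\kappa|_{\ker i^0}$ commutes with the (unsigned) column differential on $\ker i^0$ and the boundary on $C_\bullet(X,L\vc)$.

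The one piece of bookkeeping I expect to need explicit attention is the sign convention: as a subcomplex of $\mathbf{Tot}D$ (Lemma~\ref{kerisubcomplex} and Equation~\eqref{totaldifferentialD}), $\ker i^0 \cap D^0_k$ inherits the signed differential $(-1)^k d_k$, which disagrees by a sign with the unsigned $d_k$ used in Proposition~\ref{doubleexactrows}. This is a standard sign twist that is absorbed either by equipping $\ker i^0$ with its natural unsigned column differential (the convention consistent with Proposition~\ref{doubleexactrows}) or by twisting $\kappa$ by an overall $k$-dependent sign; either resolution leaves $\kappa|_{\ker i^0}$ a chain isomorphism inverse to $\epsilon$.
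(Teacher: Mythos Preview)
Your argument is correct and follows essentially the same approach as the paper's proof: the paper invokes Lemma~\ref{kappaepsilon} to get that $\kappa|_{\ker i^0}$ and $\epsilon$ are mutually inverse, then appeals to the general fact that a module-wise inverse to a chain map is automatically a chain map---your computation $\kappa d_k(x) = \kappa d_k \epsilon\kappa(x) = \kappa\epsilon\partial\kappa(x) = \partial\kappa(x)$ is precisely that fact written out. Your additional care about the sign convention is well-placed and in fact more explicit than the paper, which defers that bookkeeping to the introduction of $\bar{C}_\bullet(X,L\vc)$ immediately afterwards.
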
	
\begin{proof}
	By Lemma~\ref{kappaepsilon} the maps $\kappa|_{\ker i^0}$ and $\epsilon$ are mutually inverse, and a module-wise inverse to a chain map is necessarily a chain map.
\end{proof}

\subsubsection{An explicit inverse to the augmentation map}\label{explicitinversesect} Proposition~\ref{doubleexactrows} implies that the augmented double complex $C_{\bullet}(X,L^\textnormal{vc})\xrightarrow{\epsilon}D_{\bullet}^{\bullet}$ is a first-quadrant double complex with exact rows, so $\epsilon$ induces an isomorphism in homology:	as the rows of $D_\bullet^\bullet$ are exact, the $E^1$-page of the row spectral sequence $\mathbf{Row}D$ is concentrated the zeroth column. Using the augmentation $\epsilon$ we see that this column is isomorphic to $C_\bullet(X, L^\textnormal{vc})$. The spectral sequence then collapses on the $E^2$-page to show that $\epsilon$ induces an isomorphism from $H_*(X,L^\textnormal{vc})$ to $H_*(\mathbf{Tot}D)$ (see \cite[Chapter II.8]{Tu1982}). 

We now show how to construct an explicit chain level inverse to $\epsilon$. We will later compare this to taking the cap product with a fundamental class. Before doing this, we need to pay the price\footnote{Compared to the sign issues avoided, we assure you that this is quite a small price.} for our non-standard sign convention in the differential (Equation~\eqref{totaldifferentialD}) on the total complex for $D^\bullet_\bullet$: we have effectively multiplied the vertical differential on each $k$th row of $D^\bullet_\bullet$ by $(-1)^k$, so in order for $\epsilon$ to give a chain map $C_{\bullet}(X,L^\textnormal{vc})\rar \mathbf{Tot}D_{\bullet}$, we should do the same to $C_{\bullet}(X,L^\textnormal{vc})$. So we define $\bar{C}_{\bullet}(X,L^\textnormal{vc})$ by setting
$$
\bar{C}_{k}(X,L^\textnormal{vc})=C_{k}(X,L^\textnormal{vc}),
$$
with differential
\begin{align}
	\begin{split}\label{modifieddiff}
	\bar{d}\colon\bar{C}_{k}(X,L^\textnormal{vc})&\rar \bar{C}_{k-1}(X,L^\textnormal{vc})\\
	\chain &\mapsto (-1)^kd\chain,
	\end{split}
\end{align}
where $d$ denotes the usual differential on $C_{\bullet}(X,L^\textnormal{vc})$. Note that
$$
H_k\bar{C}_{\bullet}(X,L^\textnormal{vc})=H_k C_{\bullet}(X,L^\textnormal{vc})=H_{k}(X,L^\textnormal{vc}),
$$
for all $k$. With this definition, our next goal is:

\begin{theorem}\label{standardexactrowspecseq}
	Let $L$ be a full subcomplex of a locally finite, finite dimensional complex $X$ oriented so that $L\vc$ comes before $L$, and let $D^\bullet_\bullet(L)$ be as defined in Section~\ref{mvdoublecompsect}. Then there is a chain homotopy equivalence
	$$
	\dcap\colon \mathbf{Tot}D_\bullet \rar \bar{C}_{\bullet}(X,L^\textnormal{vc})
	$$
	with inverse $\epsilon\colon \bar{C}_{\bullet}(X,L^\textnormal{vc})\rightarrow \mathbf{Tot}D_{\bullet}$, defined summand-wise by
	\begin{alignat*}{4}
		\dcap^k_l \colon &&D_k^l(L) &\quad&\rightarrow 	&\quad&& C_{k-l}(X, L^\textnormal{vc})\\
					&& \shcochain 	&& \mapsto		&&& \sum_{\bigsimplex \in X_k} \shcochain_{ \bigsimplex}^{\bigsimplex_{\geq k- l}}\bigsimplex_{\leq k-l}.
	\end{alignat*}
	Furthermore $\dcap$ induces a natural (for inclusions of subcomplexes) $R$-bimodule isomorphism
	\begin{equation}\label{doublecomplexhomiso}
		H_*(\mathbf{Tot}D)\cong H_{*}(X,L^\textnormal{vc}).	
	\end{equation}
\end{theorem}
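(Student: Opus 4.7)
The plan is to verify three statements: (i) $\dcap$ is a chain map, (ii) $\dcap \circ \epsilon = \id$ strictly, and (iii) $\epsilon \circ \dcap$ is chain-homotopic to $\id_{\mathbf{Tot}D}$. Together these make $\dcap$ a chain homotopy equivalence with inverse $\epsilon$, and the isomorphism \eqref{doublecomplexhomiso} is a formal consequence. Naturality under inclusions of full subcomplexes is immediate, since $\dcap$ is defined purely by coefficient extraction along the back--front decomposition $\bigsimplex = \bigsimplex_{\leq k-l} \star \bigsimplex_{\geq k-l}$, which commutes with restriction.

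For (i), I would evaluate $\dcap^k_l$ on a generator $\smallsimplex \under \bigsimplex \in D^l_k$: by inspection this equals $\bigsimplex_{\leq k-l}$ when $\smallsimplex = \bigsimplex_{\geq k-l}$, and vanishes otherwise. Splitting $d^D = (-1)^{k-l} d_k + i^l$, the horizontal term $i^l$ sends $\smallsimplex \under \bigsimplex$ into $D^{l+1}_k$, where the only surviving $\dcap^k_{l+1}$-contribution comes from the component at $\bigsimplex_{\geq k-l-1} \under \bigsimplex$ with coefficient $+1$ (Equation~\eqref{horizontaldiffcoef}), producing $\bigsimplex_{\leq k-l-1}$, which is precisely the $(k-l)$-th boundary face of $\bigsimplex_{\leq k-l}$ (removal of the last vertex $\bigsimplex_{k-l}$). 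The vertical term $d_k$ contributes $\sum_{j=0}^{k-l-1}(-1)^j \bigsimplex_{\geq k-l}\under \bigsimplex_{\langle j \rangle}$, and $\dcap^{k-1}_l$ turns this into $\sum_{j=0}^{k-l-1}(-1)^j (\bigsimplex_{\leq k-l})_{\langle j \rangle}$, the remaining boundary faces. Multiplying by $(-1)^{k-l}$ and combining assembles $\bar{d}\,\dcap(\smallsimplex \under \bigsimplex) = (-1)^{k-l} d\bigsimplex_{\leq k-l}$; the nonstandard sign convention built into $d^D$ and $\bar{d}$ is designed exactly to make this accounting work. For $\smallsimplex \neq \bigsimplex_{\geq k-l}$, a case analysis along the same lines shows that the $i^l$ and $d_k$ contributions cancel, consistent with $\dcap(\smallsimplex \under \bigsimplex)=0$. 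For (ii), $\epsilon(\bigsimplex) = \sum_{j=0}^k \bigsimplex_j \under \bigsimplex \in D^0_k$, and $\dcap^k_0$ extracts the $\bigsimplex_k \under \bigsimplex$ coefficient, returning $\bigsimplex$.

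For (iii), the natural candidate is built from iterated applications of the last-vertex lift $\lambda$ from Definition~\ref{lastvertexliftdef}. A direct computation using Lemma~\ref{commilambda} shows that on $D^l_k$ with $l>0$, $d^D\lambda + \lambda d^D = \id + (-1)^{k-l}(\lambda d_k - d_k\lambda)$, so $\lambda$ alone fails to nullhomotope $\id - \epsilon\dcap$ by an error term residing in $D^{l-1}_{k-1}$. This error is itself amenable to $\lambda$: one cascades along the diagonals $D^l_k \to D^{l-1}_{k-1} \to \cdots \to D^0_{k-l}$ with alternating signs, the iteration terminating with the map $\kappa$ of Equation~\eqref{kappadef} (which coincides with $\dcap^{k-l}_0$ on $D^0_{k-l}$ and supplies the final cancellation via Lemma~\ref{kappaepsilon}). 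A cleaner alternative for the homology statement alone is to apply Proposition~\ref{doubleexactrows} directly: the row spectral sequence of the augmented double complex $\bar{C}_\bullet(X,L\vc)\hookrightarrow D^\bullet_\bullet$ collapses on the $E^2$-page, showing $\epsilon$ is a quasi-isomorphism, and (ii) then forces $\dcap$ to induce its homology inverse. The main obstacle is the sign bookkeeping in the cascading null-homotopy over $l$ successive rows, which is the explicit realisation of the ``beefed-up snake lemma'' foreshadowed in the introduction.
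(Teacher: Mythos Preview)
Your outline is correct and lands on the same mechanism as the paper—the iterated last-vertex lift—but the paper organizes the argument more economically. Rather than verifying that $\dcap$ is a chain map by a direct generator computation (your step (i)) and then separately building a cascading homotopy for $\epsilon\dcap \simeq \id$ (your step (iii)), the paper introduces the \emph{diagonal shift} $\Lambda = \id - \lambda d^D - d^D\lambda$ and proves the single identity $\dcap = \kappa\,\Lambda^m$ for $m > \dim X$. This packaging does two jobs at once: since $\Lambda$ is chain homotopic to the identity by construction, $\Lambda^m$ is a chain map, and the Python Proposition shows its image lies in $\ker i^0$ where $\kappa$ is already known to be a chain map (Lemma~\ref{kappachainmap}); this gives (i) without any case analysis on $\smallsimplex \neq \bigsimplex_{\geq k-l}$. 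For (iii), an explicit computation of $\Lambda^m$ on generators (Proposition~\ref{Lambdapower}) shows $\epsilon\dcap = \Lambda^m$ on the nose, and $\Lambda^m \simeq \id$ is immediate.

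Your direct approach to (i) does work—the case $\smallsimplex \neq \bigsimplex_{\geq k-l}$ that you leave to ``a case analysis along the same lines'' reduces to checking that the $i^l$ contribution at $\bigsimplex_{\geq k-l-1}\under\bigsimplex$ and the $d_k$ contributions from faces $\bigsimplex_{\langle j \rangle}$ with $j \geq k-l$ cancel, which they do after reindexing—but it is an extra computation that the paper's formulation sidesteps. Your cascading homotopy in (iii) is exactly the content of $\Lambda^m$, just not yet given a closed form; the paper's Lemmas~\ref{Lambdaongens}--\ref{Lambdalowpower} supply the inductive formula you would need. The ``cleaner alternative'' via the row spectral sequence gives only the quasi-isomorphism, not the chain homotopy equivalence, as you note.
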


Note the similarity between $\dcap$ and the cap product (Equation~\eqref{cupversion1}). The isomorphism is natural in the sense that:

\begin{lemma}\label{cisnatural}
	Let $K\subset K'$ be full subcomplexes of a locally finite oriented simplicial complex $X$. Then the inclusion $f\colon K\rar K'$ induces a map
	\begin{align*}
		f^\bullet_\bullet\colon& D^\bullet_\bullet(K) \rar 	D^\bullet_\bullet(K')\\
		& \smallsimplex \under \bigsimplex \mapsto \smallsimplex \under \bigsimplex.
	\end{align*}
	Denote by $\mathfrak{c}_K$ and $\mathfrak{c}_{K'}$ the isomorphism from Theorem~\ref{standardexactrowspecseq} for $K$ and $K'$ respectively. Then for the induced maps on homology we have
	$$
	(\mathfrak{c}_{K'})_* f^*_*=f^*_* (\mathfrak{c}_K)_*.
	$$
\end{lemma}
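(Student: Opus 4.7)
The plan is to establish naturality at the chain level and then pass to homology. First I would unwind the structure of the two maps. Since $K\subset K'$ is a full subcomplex inclusion we have $K_l\subset K'_l$ for every $l$, so $f^\bullet_\bullet\co D^\bullet_\bullet(K)\to D^\bullet_\bullet(K')$ is simply the inclusion of a direct summand. It commutes with the vertical differentials $d_k$ (which act summand-wise on the $C_\bullet(X,X-\ostar\smallsimplex)$) and with the horizontal differentials $i^l$ (because any face $\smallsimplex_{\langle j\rangle}$ of a simplex $\smallsimplex\in K_l$ stays in $K$ by fullness); hence $f^\bullet_\bullet$ is a chain map. Dually, I would orient $X$ so that the vertices of $K'\vc$ come first, then those of $K'\setminus K$, and then those of $K$: this makes both $K\vc$ before $K$ and $K'\vc$ before $K'$, so the formulas of Theorem~\ref{standardexactrowspecseq} for $\dcap_K$ and $\dcap_{K'}$ are available simultaneously. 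The inclusion of pairs $(X,K'\vc)\hookrightarrow(X,K\vc)$ coming from $K'\vc\subset K\vc$ gives a quotient chain map $q\co\bar C_\bullet(X,K'\vc)\to\bar C_\bullet(X,K\vc)$ (the sign twist in $\bar d$ is preserved since $q$ has no effect on degree).

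The key step is the chain-level identity
$$
\dcap_K \;=\; q\circ\dcap_{K'}\circ f^\bullet_\bullet.
$$
This is an immediate consequence of the explicit formula for $\dcap$: for $\shcochain\in D^l_k(K)$ the coefficients $\shcochain^\smallsimplex_\bigsimplex$ vanish unless $\smallsimplex\in K_l$, and $f^\bullet_\bullet$ does not alter them. Hence both $\dcap_K(\shcochain)$ and $\dcap_{K'}(f^\bullet_\bullet\shcochain)$ equal the formal sum
$$
\sum_{\bigsimplex\in X_k}\shcochain^{\bigsimplex_{\geq k-l}}_\bigsimplex\,\bigsimplex_{\leq k-l},
$$
viewed respectively in $\bar C_{k-l}(X,K\vc)$ and $\bar C_{k-l}(X,K'\vc)$, and the quotient $q$ sends the latter to the former.

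Passing to homology now yields the commutativity of the square
$$
\begin{tikzcd}
H_*(\mathbf{Tot}D(K))\arrow[r,"(f^\bullet_\bullet)_*"]\arrow[d,"(\dcap_K)_*"'] & H_*(\mathbf{Tot}D(K'))\arrow[d,"(\dcap_{K'})_*"]\\
H_*(X,K\vc) & H_*(X,K'\vc)\arrow[l,"q_*"]
\end{tikzcd}
$$
which is the content of the displayed identity (the two occurrences of ``$f^*_*$'' in the statement refer to maps in opposite directions: $f^\bullet_\bullet$ on $D$ is covariant in $K$, while the induced map on $\bar C_\bullet(X,-\vc)$ is contravariant). There is no deep obstacle; the only subtlety is bookkeeping of the directions of the induced maps, and once the conventions are unwound the identity follows from the explicit formula for $\dcap$.
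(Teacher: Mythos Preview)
Your proof is correct, and in fact cleaner than the paper's. The paper argues via the inverse map $\epsilon$: it asserts that $\epsilon$ commutes with the maps induced by $f$ \emph{already at the chain level}, and then invokes the fact (established in the proof of Theorem~\ref{standardexactrowspecseq}) that $\epsilon$ and $\mathfrak c$ are mutual inverses on homology. You instead work directly with the explicit formula for $\mathfrak c$ and verify the chain-level identity $\mathfrak c_K = q\circ\mathfrak c_{K'}\circ f^\bullet_\bullet$, which is immediate from Equation~\eqref{congens}.

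Your route actually buys something. The chain-level naturality of $\epsilon$ as the paper phrases it does not literally hold: for a simplex $\alpha$ with a vertex in $K'\setminus K$, the element $\epsilon_{K'}(\alpha)=\sum_{v\in K'_0,\,v\le\alpha} v\under\alpha$ contains summands that $f^\bullet_\bullet\bigl(\epsilon_K(q\alpha)\bigr)=\sum_{v\in K_0,\,v\le\alpha} v\under\alpha$ does not, so the square with $\epsilon$ only commutes after passing to homology. Your identity for $\mathfrak c$, by contrast, holds on the nose. You also handle two bookkeeping points the paper glosses over: the choice of a single orientation (vertices of $K'^{\mathrm{vc}}$, then of $K'\setminus K$, then of $K$) making both ``$L^{\mathrm{vc}}$ before $L$'' hypotheses hold simultaneously, and the observation that the two instances of $f^*_*$ in the displayed equation necessarily denote maps of opposite variance (covariant on $D$, contravariant on $\bar C_\bullet(X,-^{\mathrm{vc}})$ via the quotient $q$).
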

\begin{proof}
	The fact that $f^\bullet_\bullet$ is a chain map is a routine check. To see that $\mathfrak{c}$ is natural observe that, for $\epsilon_X$ and $\epsilon_Y$ the corresponding morphisms from Theorem~\ref{standardexactrowspecseq},
	$$
	\epsilon_Y f^\bullet_\bullet= f^\bullet_\bullet \epsilon_X
	$$
	already at the level of chains. We show in the proof of Theorem~\ref{standardexactrowspecseq} that $\epsilon$ and $\mathfrak{c}$ are inverses for each other at the level of homology and then the statement follows.
\end{proof}

\begin{proof}[Proof of Theorem~\ref{standardexactrowspecseq}]
	We need to show that $\dcap$ is a homotopy inverse for $\epsilon$. In particular we need to show that $\dcap$ is a chain map, for this we claim that
	$$
	\dcap = \kappa \Lambda^m,
	$$
	where $\kappa\colon D^0_\bullet \rar C_\bullet(X;L\vc)$ is the map from Equation~\eqref{kappadef}, $m$ is a large enough integer, and $\Lambda$ is the \emph{diagonal shift}
	\begin{equation}\label{Lambdadef}
		\Lambda= \id - \lambda d^D - d^D\lambda,
	\end{equation}
	where  $d^{D,l}_k=(-1)^{k-l}d_k+ i^l$ is the differential on the total complex. The map $\Lambda\colon \mathbf{Tot}D\rar \mathbf{Tot}D$ is chain homotopic to the identity by construction. We show in Proposition~\ref{pythonlemma} that, for $m$ large enough, the chain map $\Lambda^m$ projects onto $\ker i^0$. We know that $\kappa|_{\ker i^0}$ is a chain map by Lemma~\ref{kappachainmap}, so $\dcap=\kappa\Lambda^m$ is a chain map. 
	
	To show that $\dcap = \kappa \Lambda^m$, note that on a generator $\smallsimplex \under \bigsimplex\in D^l_k$ the map $\dcap$ is
	\begin{equation}\label{congens}
		\dcap(\smallsimplex\under\bigsimplex)= \begin{cases}
			\bigsimplex_{\leq k-l}& \mbox{ if }\smallsimplex=\bigsimplex_{\geq k-l} \\ 0 & \mbox{ otherwise.}
		\end{cases}
	\end{equation} 
	We prove below, in Proposition~\ref{Lambdapower}, that for $\smallsimplex \under \bigsimplex\in D^l_k$ and any $m>l$ we have
	$$
	\Lambda^{m}(\smallsimplex\under\bigsimplex)= \begin{cases}
		\sum_{j=0}^{k-l} \bigsimplex_{j}\under \bigsimplex_{\leq k-l} &\mbox{ if }\smallsimplex=\bigsimplex_{\geq k-l}\\
		0 & \mbox{ otherwise.}
	\end{cases}
	$$
By definition $\kappa(\bigsimplex_{j}\under \bigsimplex_{\leq k-l})=\delta_{j,k-l}\bigsimplex_{\leq k-l}$, therefore $\dcap=\kappa\Lambda^m$ for $m > \dim(X)$.

	To show that $\dcap$ is a homotopy inverse to $\epsilon$, first note that for a $k$-simplex $\alpha$
	$$
	\dcap(\epsilon \bigsimplex)= \dcap\left(\sum_{i=0}^k \bigsimplex_i \under \bigsimplex \right)= \bigsimplex,
	$$
	so $\dcap \epsilon=\id$. The other composite gives, when evaluated on a generator $\smallsimplex\under \bigsimplex \in D^l_k$,
	\begin{equation*}\label{epccomp}
		\epsilon\dcap(\smallsimplex\under\bigsimplex)= \begin{cases}
			\epsilon(\bigsimplex_{\leq k-l})= \sum_{j=0}^{k-l} \bigsimplex_j \under \bigsimplex_{\leq k-l} & \mbox{ if }\smallsimplex=\bigsimplex_{\geq k-l}\\ 0 &\mbox{ otherwise}
		\end{cases} = \Lambda^m(\smallsimplex\under\bigsimplex),
	\end{equation*}
	for $m>l$. The map $\Lambda$ is chain homotopic to the identity by construction, so any power of it is. This shows that $\epsilon\dcap$ is homotopic to the identity and we are done.
\end{proof}

This also completes the proof of Lemma~\ref{cisnatural}, establishing that $\mathfrak{c}$ is natural. In the proof of Theorem~\ref{standardexactrowspecseq} we used:

\begin{proposition}[Python Proposition]\label{pythonlemma}
 	When $m > \dim(X)$ the chain map $\Lambda^m$ projects onto the kernel of $i^0$. 
\end{proposition}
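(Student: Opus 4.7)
The plan is to exhibit $\Lambda$ as an operator that either strictly decreases the column index $l$ (when $l > 0$) or, when already in column $0$, lands directly in $\ker i^0$. Iterating $\dim X + 1$ times then forces the image of $\Lambda^m$ into $\ker i^0$.

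First I would compute $\Lambda|_{D^l_k}$ for $l > 0$. Expanding $\Lambda = \id - \lambda d^D - d^D\lambda$ using the formula $d^D = (-1)^{k-l}d + i$ on $D^l_k$ and $d^D = -(-1)^{k-l}d + i$ on $D^{l-1}_k$ (the column into which $\lambda$ lands) produces
\[
\lambda d^D = (-1)^{k-l}\lambda d + \lambda i, \qquad d^D\lambda = -(-1)^{k-l}d\lambda + i\lambda.
\]
The horizontal contributions combine as $-\lambda i - i\lambda = -\id$ by the $l > 0$ case of Lemma~\ref{commilambda}, cancelling the identity. What survives is
\[
\Lambda|_{D^l_k} \;=\; (-1)^{k-l}\bigl(d\lambda - \lambda d\bigr),
\]
a composite that visibly maps $D^l_k$ into $D^{l-1}_{k-1}$. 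Iterating, $\Lambda^l$ takes $D^l_k$ into $D^0_{k-l}$; so after $\dim X$ applications every chain is supported in the zeroth column.

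Next I would analyse $\Lambda$ on $D^0_k$. Since $\lambda$ vanishes on $D^0$, we have $\Lambda = \id - \lambda i^0$. The ordering convention that $L\vc$ is before $L$ ensures that any nonzero generator $\bigsimplex_m \under \bigsimplex \in D^0_k$ satisfies $\bigsimplex_m \in L$, which in turn forces $\bigsimplex_k \in L$ (otherwise some $L\vc$-vertex would appear after the $L$-vertex $\bigsimplex_m$, contradicting the convention). Feeding this into the $l = 0$ case of Lemma~\ref{commilambda} yields
\[
\Lambda(\bigsimplex_m \under \bigsimplex) \;=\; \begin{cases} \sum_{j=0}^{k}\bigsimplex_j\under\bigsimplex \;=\; \epsilon(\bigsimplex), & m = k, \\ 0, & m < k, \end{cases}
\]
where the sum collapses to $\epsilon(\bigsimplex)$ because $\bigsimplex_j \under \bigsimplex = 0$ whenever $\bigsimplex_j \notin L$. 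By the exactness of the augmented row in Proposition~\ref{doubleexactrows}, $\im\epsilon \subseteq \ker i^0$, so $\Lambda(D^0_k) \subseteq \ker i^0$. Combining the two steps, any $\xi \in D^l_k$ with $l \leq \dim X$ satisfies $\Lambda^{l+1}(\xi) \in \ker i^0$, and therefore $\Lambda^m$ lands in $\ker i^0$ for every $m > \dim X$.

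The main obstacle I anticipate is the sign bookkeeping: the non-standard sign $(-1)^{k-l}$ in the definition of $d^D$ changes sign when passing from column $l$ to column $l-1$, and this needs to be tracked precisely so that the horizontal terms really do cancel the identity via Lemma~\ref{commilambda} rather than double it. A secondary subtlety is the correct use of the ``$L\vc$ before $L$'' convention to rule out the otherwise-problematic mixed configuration where $\bigsimplex_m \in L$ but $\bigsimplex_k \notin L$ on $D^0_\bullet$; without this convention the analysis of $\Lambda$ on the zeroth column would not produce an element of $\ker i^0$.
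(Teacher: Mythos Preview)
Your argument is correct and follows the same overall strategy as the paper: show that $\Lambda$ strictly lowers the column index when $l>0$, and lands in $\ker i^0$ when $l=0$.  Your computation $\Lambda|_{D^l_k}=(-1)^{k-l}(d\lambda-\lambda d)$ for $l>0$ is exactly the paper's Lemma~\ref{Lambdaabstract}.

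For $l=0$ you diverge slightly from the paper.  The paper (Lemma~\ref{Lambdaproj}) argues purely algebraically: from $\lambda i+i\lambda=\id$ in positive degree one gets $(\lambda i)^2=\lambda i$, so $\Lambda=\id-\lambda i$ is idempotent on $D^0$, and $i\Lambda=i-i\lambda i=i-(\id-\lambda i)i=0$.  You instead compute $\Lambda$ explicitly on generators (this is the $l=0$ case of the paper's later Lemma~\ref{Lambdaongens}) and then identify the result with $\epsilon(\bigsimplex)\in\im\epsilon\subseteq\ker i^0$ via Proposition~\ref{doubleexactrows}.  Both work; the paper's route is marginally cleaner because it never appeals to the explicit form of $\epsilon$ or to row-exactness, while yours has the advantage of making the link to $\epsilon$ visible already here.

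One small omission: you have shown that $\im\Lambda^m\subseteq\ker i^0$, but the statement asks for a \emph{projection onto} $\ker i^0$, i.e.\ also $(\Lambda^m)^2=\Lambda^m$ and $\im\Lambda^m\supseteq\ker i^0$.  This is immediate once you note that $\Lambda|_{\ker i^0}=\id$ (because $\Lambda=\id-\lambda i^0$ on $D^0$), so $\Lambda^m$ fixes $\ker i^0$ pointwise; you should say this explicitly.

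Your worry about the ordering convention is well-placed but already baked into the hypotheses: the last-vertex lift $\lambda$ is only defined (Definition~\ref{lastvertexliftdef}) under the standing assumption that $L\vc$ is before $L$, and Lemma~\ref{commilambda} is stated in that context, so no separate case analysis is needed.
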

We will prove this in two steps. The first uses: 
\begin{lemma}\label{Lambdaabstract}
	Let $\shcochain\in D^l_k$. Then for the map $\Lambda=\id - \lambda d^D - d^D\lambda$ we have
	$$
	\Lambda \shcochain= \begin{cases}
		(-1)^{k-l}\left(d \lambda \shcochain - \lambda d \shcochain\right)& \mbox{ for }l>0,\\
		\shcochain- \lambda i \shcochain & \mbox{ for }l=0.
	\end{cases}	
	$$
\end{lemma}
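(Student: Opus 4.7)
The proof is a direct computation: expand $\Lambda\phi = \phi - \lambda d^D\phi - d^D\lambda\phi$ using the definition $d^{D,l}_k = (-1)^{k-l}d_k + i^l$, group the terms involving $i$ and $\lambda$, and then invoke Lemma~\ref{commilambda} to collapse $\lambda i + i\lambda$ to the identity. The sign choice baked into $d^D$ is designed precisely so that the $d$-terms commute up to an overall sign and the $i$-terms interact cleanly with $\lambda$, so no miracle is needed — the only subtle point is the degenerate behaviour at $l = 0$.

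For the $l > 0$ case, I would write
\begin{align*}
\lambda d^D\phi + d^D\lambda\phi
&= (-1)^{k-l}\lambda d\phi + \lambda i\phi + (-1)^{k-l+1}d\lambda\phi + i\lambda\phi\\
&= (\lambda i\phi + i\lambda\phi) + (-1)^{k-l}(\lambda d\phi - d\lambda\phi),
\end{align*}
using that $\lambda\phi\in D^{l-1}_k$, so the sign in $d^D\lambda\phi$ is $(-1)^{k-(l-1)} = -(-1)^{k-l}$. Applying Lemma~\ref{commilambda} to the first bracketed term yields $\phi$, so subtracting this sum from $\phi$ gives $\Lambda\phi = (-1)^{k-l}(d\lambda\phi - \lambda d\phi)$, as claimed.

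For the $l = 0$ case, the term $d^D\lambda\phi$ vanishes outright since $\lambda$ is defined to be zero on $D^0_\bullet$. The remaining contribution is $\lambda d^D\phi = (-1)^k\lambda d\phi + \lambda i\phi$, and here $d\phi \in D^0_{k-1}$, so $\lambda d\phi = 0$ as well — again because $\lambda$ vanishes on column $0$. This leaves $\Lambda\phi = \phi - \lambda i\phi$, matching the stated formula.

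The main obstacle, if there is one, is just bookkeeping the signs coming from the nonstandard convention $(-1)^{k-l}d_k + i^l$ in $d^D$ — in particular noticing that $\lambda\phi$ lives in column $l-1$, so the sign flips when $d^D$ is applied to $\lambda\phi$ rather than to $\phi$, which is exactly what allows the $d$-terms to combine into a commutator rather than cancelling or doubling.
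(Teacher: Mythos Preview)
Your proof is correct and follows essentially the same route as the paper's: expand $\Lambda$ using the definition of $d^D$, track the sign flip coming from $\lambda\phi$ living in column $l-1$, and apply Lemma~\ref{commilambda} to collapse $\lambda i + i\lambda$ to the identity when $l>0$. Your treatment of the $l=0$ case is in fact slightly more explicit than the paper's (you spell out why $\lambda d\phi$ vanishes, and avoid a small typo in the paper where $i\lambda$ is written in place of $\lambda i$).
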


\begin{proof}
	The total differential on $D^l_k$ is $d^{D,l}_k=(-1)^{k-l}d_k+ i^l$, so we have
	$$
	\Lambda = \id -(-1)^{k-l}\lambda d -(-1)^{k-l-1}d \lambda-\lambda i - i\lambda.
	$$
	We showed that $\lambda i + i \lambda  =\id$ when $l>0$ in Lemma~\ref{commilambda}, so the above simplifies to the required result. On $D^0_\bullet$ the map $\lambda$ is trivial, and after removing trivial terms we attain $\Lambda =\id-i\lambda$.
\end{proof}

In particular this tells us that $\Lambda$ shifts elements along the diagonal. The second step in the proof of Proposition~\ref{pythonlemma} uses:

\begin{lemma}\label{Lambdaproj}
	Let $\shcochain\in D^0_{k}$, then $\Lambda^2\shcochain=\Lambda\shcochain$ and
	$$
	i \Lambda \shcochain=0.
	$$
\end{lemma}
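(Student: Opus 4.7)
The plan is to compute both statements directly using the two facts from Lemma~\ref{Lambdaabstract} and Lemma~\ref{commilambda}, together with the observation that $i$ is a differential so $i^2=0$. The key preliminary remark is that for $\shcochain \in D^0_k$, the element $\Lambda\shcochain = \shcochain - \lambda i \shcochain$ also lives in $D^0_k$, since $i$ raises column degree by one and $\lambda$ lowers it by one. So $\Lambda$ restricts to an endomorphism of $D^0_k$, and both claims live entirely in $D^0_k$ and $D^1_k$.

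First I would prove $i\Lambda\shcochain=0$. Applying Lemma~\ref{commilambda} to the element $i\shcochain \in D^1_k$ gives
\begin{equation*}
	\lambda i(i\shcochain) + i\lambda(i\shcochain) = i\shcochain.
\end{equation*}
Since $i^2=0$, the first term vanishes and we obtain $i\lambda i\shcochain = i\shcochain$. Using the formula $\Lambda\shcochain = \shcochain - \lambda i\shcochain$ from Lemma~\ref{Lambdaabstract} then yields
\begin{equation*}
	i\Lambda\shcochain = i\shcochain - i\lambda i\shcochain = i\shcochain - i\shcochain = 0.
\end{equation*}

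For $\Lambda^2\shcochain = \Lambda\shcochain$, I would simply use the identity $\Lambda \psi = \psi - \lambda i\psi$ once more, this time applied to $\psi = \Lambda\shcochain \in D^0_k$:
\begin{equation*}
	\Lambda^2\shcochain = \Lambda\shcochain - \lambda i\Lambda\shcochain = \Lambda\shcochain - \lambda\cdot 0 = \Lambda\shcochain,
\end{equation*}
where the second equality uses the first part of the lemma. This finishes the proof.

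There is no real obstacle here; the content is purely the nilpotence $i^2 = 0$ combined with the previously established last-vertex-lift identities. The only thing to keep an eye on is that one must stay in degrees where Lemma~\ref{commilambda} actually applies: the identity $\lambda i + i\lambda = \id$ is invoked on $D^1_k$ (where $l=1>0$), not on $D^0_k$, and this is exactly why the argument works cleanly.
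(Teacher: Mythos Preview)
Your proof is correct and uses essentially the same ingredients as the paper's: the formula $\Lambda=\id-\lambda i$ on $D^0_k$ from Lemma~\ref{Lambdaabstract}, the identity $\lambda i+i\lambda=\id$ on $D^1_k$ from Lemma~\ref{commilambda}, and $i^2=0$. The only difference is order: the paper first shows $(\lambda i)^2=\lambda i$ to get idempotence and then proves $i\Lambda=0$, whereas you prove $i\Lambda=0$ first and then read off idempotence in one line; your ordering is marginally cleaner.
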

\begin{proof}
	By Lemma~\ref{Lambdaabstract}, $\Lambda$ acts as $\id-\lambda i$ on $D^0_k$. In Lemma~\ref{commilambda} we showed that $\lambda i + i \lambda  =\id$ when $l>0$, so that
	$$
	(\lambda i)^2=\lambda (i \lambda) i = \lambda (\id -\lambda i) i  =\lambda i - \lambda i^2=\lambda i,
	$$
	as $i^2=0$. Hence $\Lambda$ on $D^0_k$ is complementary to an idempotent, hence also idempotent. For the second claim, note
	$$
	i \Lambda = i(\id - \lambda i)= i- i\lambda i=i-(\id-\lambda i)i=0,
	$$
	again using the fact that $\lambda i + i \lambda  =\id$ when $l>0$.
\end{proof}

\begin{proof}[Proof of Proposition~\ref{pythonlemma}]
	By Lemma~\ref{Lambdaabstract}, the map $\Lambda$ maps $D^l_k$ to $D^{l-1}_{k-1}$ for $l>0$, and $D^0_k$ to itself. On $D^0_k$, Lemma~\ref{Lambdaproj} tells us that $\Lambda$ projects onto $\ker i^0$. As $D^l_k$ is trivial when $l > \dim(X)$, the image of $\Lambda^{\dim(X)}$ is contained in $D^0_\bullet$. So for $m > \dim(X)$ the map $\Lambda^m$ projects onto $\ker i^0$. 
\end{proof}

\begin{remark}
	Note that if $\shcochain\in D^l_k$ is $d^D$ closed, then using Lemma~\ref{commilambda} we get
	$$
	\Lambda \shcochain = \shcochain - d^D \lambda \shcochain= \shcochain- i \lambda \shcochain-(-1)^{k-l+1} d \lambda \shcochain = (-1)^{k-l}d \lambda \shcochain, 
	$$
	so $\Lambda$ acts by lifting horizontally and subsequently taking the vertical differential on closed forms. Furthermore
	$$
	i \Lambda \shcochain= (-1)^{k-l} i d \lambda \shcochain = (-1)^{k-l} d i \lambda \shcochain = (-1)^{k-l} d \shcochain =0,
	$$
	so $\Lambda \shcochain \in \ker i$ and has a unique lift $\lambda\Lambda \shcochain \in D^{l-2}_{k-1}$. This corresponds to the connecting homomorphism in the usual snake lemma, our Python Proposition~\ref{pythonlemma} can be viewed as a form of the snake lemma with iterated lifts.
\end{remark}

In the proof of Theorem~\ref{standardexactrowspecseq} we also used the explicit value of powers of $\Lambda$ on generators. The first step in computing these is:

\begin{lemma}\label{Lambdaongens}
	Let $\smallsimplex \under \bigsimplex \in D^l_k$. Then
	$$
	\Lambda( \smallsimplex\under \bigsimplex)= \begin{cases}(\delta_{\smallsimplex_l, \bigsimplex_k}- \delta_{\smallsimplex_l, \bigsimplex_{k-1}})\smallsimplex_{\leq l-1}\under\bigsimplex_{\leq k-1}& \mbox{ for }l>0\\
		\delta_{\smallsimplex,\bigsimplex_k} \sum_{j=0}^k \bigsimplex_j \under \bigsimplex & \mbox{ for }l=0,
	\end{cases}	
	$$
	where $\delta_{\smallsimplex_l, \bigsimplex_k}$ denotes the Kronecker delta.
\end{lemma}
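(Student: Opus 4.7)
The plan is to apply Lemma~\ref{Lambdaabstract} directly and then compute term by term using the explicit formula~\eqref{lambdaongen} for $\lambda$ on generators, together with the standard boundary formula for $d$. The computation splits naturally into the cases $l>0$ and $l=0$, and in each case the main subtlety is keeping track of when a face $\bigsimplex_{\langle j \rangle}$ still contains $\smallsimplex$ (equivalently $\smallsimplex_{\leq l-1}$), so that the generator $\smallsimplex\under\bigsimplex_{\langle j\rangle}$ is nonzero in the relevant $D^\bullet_\bullet$ summand.

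For $l=0$, nothing needs to be done beyond observation: Lemma~\ref{Lambdaabstract} gives $\Lambda\shcochain = \shcochain - \lambda i \shcochain$, and for a generator $\bigsimplex_m\under\bigsimplex \in D^0_k$ Lemma~\ref{commilambda} evaluates $\lambda i$ explicitly. Substituting, the case $m<k$ cancels to zero, and the case $m=k$ yields $\bigsimplex_k\under\bigsimplex + \sum_{j=0}^{k-1}\bigsimplex_j\under\bigsimplex = \sum_{j=0}^{k}\bigsimplex_j\under\bigsimplex$, which is exactly $\delta_{\smallsimplex,\bigsimplex_k}\sum_j \bigsimplex_j\under\bigsimplex$ when $\smallsimplex=\bigsimplex_m$.

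For $l>0$, Lemma~\ref{Lambdaabstract} gives $\Lambda = (-1)^{k-l}(d\lambda - \lambda d)$, so the plan is to compute $d\lambda(\smallsimplex\under\bigsimplex)$ and $\lambda d(\smallsimplex\under\bigsimplex)$ separately and show that their difference collapses to a single term. The first is nonzero only when $\smallsimplex_l = \bigsimplex_k$, in which case $d\lambda(\smallsimplex\under\bigsimplex) = (-1)^l\sum_{j=0}^{k}(-1)^j \smallsimplex_{\leq l-1}\under\bigsimplex_{\langle j\rangle}$. For $j<k$ the term survives precisely when $\bigsimplex_j\notin\{\smallsimplex_0,\ldots,\smallsimplex_{l-1}\}$; the $j=k$ term always survives and contributes $(-1)^{l+k}\smallsimplex_{\leq l-1}\under\bigsimplex_{\leq k-1}$. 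Applying $\lambda$ to $d(\smallsimplex\under\bigsimplex) = \sum_j(-1)^j\smallsimplex\under\bigsimplex_{\langle j\rangle}$ splits into two subcases based on what the last vertex of $\bigsimplex_{\langle j\rangle}$ is: for $j<k$ it is $\bigsimplex_k$, so these terms contribute only when $\smallsimplex_l=\bigsimplex_k$ and $\smallsimplex\leq\bigsimplex_{\langle j\rangle}$, which matches the $j<k$ terms of $d\lambda$ exactly; for $j=k$ the last vertex of $\bigsimplex_{\langle k\rangle}$ is $\bigsimplex_{k-1}$, so this term contributes only when $\smallsimplex_l=\bigsimplex_{k-1}$ and produces $(-1)^{k+l}\smallsimplex_{\leq l-1}\under\bigsimplex_{\leq k-1}$.

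Combining these, the $j<k$ contributions cancel pairwise when $\smallsimplex_l=\bigsimplex_k$, leaving only the $j=k$ contribution from $d\lambda$; when $\smallsimplex_l=\bigsimplex_{k-1}$ instead (so $\smallsimplex_l\neq\bigsimplex_k$), only the $j=k$ contribution from $\lambda d$ survives. Multiplying by $(-1)^{k-l}$ to obtain $\Lambda$ gives exactly $(\delta_{\smallsimplex_l,\bigsimplex_k}-\delta_{\smallsimplex_l,\bigsimplex_{k-1}})\smallsimplex_{\leq l-1}\under\bigsimplex_{\leq k-1}$, and in all other configurations both $d\lambda$ and $\lambda d$ vanish. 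The main obstacle is purely bookkeeping: ensuring that the matching of surviving faces between $d\lambda$ and $\lambda d$ is actually term-by-term (same index set $\{j<k: \bigsimplex_j\notin\smallsimplex_{\leq l-1}\}$ on both sides), which follows because $\smallsimplex_l=\bigsimplex_k$ forces $\bigsimplex_j\neq\smallsimplex_l$ for $j<k$, so $\bigsimplex_j\notin\{\smallsimplex_0,\ldots,\smallsimplex_{l-1}\}$ if and only if $\smallsimplex\leq\bigsimplex_{\langle j\rangle}$.
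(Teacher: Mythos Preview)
Your proposal is correct and follows essentially the same route as the paper: for $l=0$ you invoke Lemma~\ref{Lambdaabstract} and Lemma~\ref{commilambda} exactly as the paper does, and for $l>0$ you compute $d\lambda$ and $\lambda d$ separately from~\eqref{lambdaongen} and the boundary formula, then cancel the $j<k$ terms. If anything you are slightly more careful than the paper in tracking when $\smallsimplex_{\leq l-1}\under\bigsimplex_{\langle j\rangle}$ or $\smallsimplex\under\bigsimplex_{\langle j\rangle}$ vanishes by the convention $\smallsimplex\under\bigsimplex=0$ for $\smallsimplex\nleq\bigsimplex$, and in verifying that these vanishing conditions coincide on the two sides; the paper simply asserts that ``all but the $j=k$ summands match'' without spelling this out.
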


\begin{proof}
	By Lemma~\ref{Lambdaabstract}, for $l>0$ we need to compute $\lambda d $ and $d \lambda$ on $\smallsimplex \under \bigsimplex\in D^l_k$. Equation~\eqref{lambdaongen} gives:
	$$
	\lambda d \smallsimplex \under \bigsimplex = \sum_{j=0}^k(-1)^j \lambda(\smallsimplex \under\bigsimplex_{\langle j \rangle}) = \sum_{j=0}^{k}(-1)^{j+l}\delta_{\smallsimplex_l, (\bigsimplex_{\langle j \rangle})_{k-1}}\smallsimplex_{\leq l-1} \under\bigsimplex_{\langle j \rangle},
	$$
	Note that
	$$
	(\bigsimplex_{\langle j \rangle})_{k-1}= \begin{cases}
		\bigsimplex_{k}& \mbox{ for }j<k \\ \bigsimplex_{k-1} &\mbox{ for }j=k,
	\end{cases}
	$$
	so we have
	$$
	\lambda d \smallsimplex \under \bigsimplex=\begin{cases}
		\sum_{j=0}^{k-1}(-1)^{j+l}\smallsimplex_{\leq l-1} \under\bigsimplex_{\langle j \rangle} &\mbox{ for }\smallsimplex_l= \bigsimplex_{k}\\
		 (-1)^{k-l}\smallsimplex_{\leq l -1}\under \bigsimplex_{\leq k-1} &\mbox{ for }\smallsimplex_l= \bigsimplex_{k-1} \\
   0 &\mbox { otherwise.}
	\end{cases} 
	$$
	For $d\lambda$ we get:
	$$
	d \lambda \smallsimplex \under \bigsimplex = (-1)^{l} \delta_{\smallsimplex_l, \bigsimplex_k} d\smallsimplex_{\leq l-1}\under \bigsimplex= \delta_{\smallsimplex_l, \bigsimplex_k} d\smallsimplex_{\leq l-1}\under \bigsimplex=  \delta_{\smallsimplex_l, \bigsimplex_k}\sum_{j=0}^k (-1)^{j+l}\smallsimplex_{\leq l-1} \under\bigsimplex_{\langle j \rangle}.
	$$
	When $\smallsimplex_l=\bigsimplex_k$, all but the $j=k$ summands in $d\lambda $ and $\lambda d$ match. So we find
	$$
	(-1)^{k-l}\left(d \lambda \shcochain - \lambda d \shcochain\right)\smallsimplex\under \bigsimplex= (\delta_{\smallsimplex_l, \bigsimplex_k}- \delta_{\smallsimplex_l, \bigsimplex_{k-1}})\smallsimplex_{\leq l-1}\under \bigsimplex_{\leq k-1}.
	$$

	For $l=0$, Lemma~\ref{Lambdaabstract} tells us that we have $\Lambda= \id-\lambda i^0$. We found $\lambda i^0$ in Lemma~\ref{commilambda}, and using that calculation gives:
	$$
	\Lambda(\bigsimplex_m \under \bigsimplex) = \bigsimplex_m \under \bigsimplex-\lambda i^0(\bigsimplex_m \under \bigsimplex) = \begin{cases}
		\sum_{j=0}^{k}\bigsimplex_j \under \bigsimplex &\mbox{ for }m=k\\ 0 &\mbox{ otherwise.}
	\end{cases}
	$$
	This is what we wanted to show.
\end{proof}

We first prove a short lemma before showing that powers of $\Lambda$ behave nicely.

\begin{lemma}\label{niceidentity}
Suppose that $\sigma$ is an $n$-simplex and $\alpha$ is an $n+1$-simplex. If $\partial$ is the usual simplicial chain map then $$\sigma^*(\partial \alpha)=\sigma_{\leq{k-1}}^*(\partial\alpha_{\leq k})\sigma_{\geq k}^*(\partial \alpha_{\geq k}),$$ for $1 \leq k \leq n$.   
\end{lemma}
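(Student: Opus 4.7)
The plan is to prove this by case analysis on whether $\sigma$ actually occurs as a face of $\alpha$. Recall that $\partial \alpha = \sum_{i=0}^{n+1}(-1)^i \alpha_{\langle i\rangle}$, so $\sigma^{*}(\partial\alpha)$ is nonzero exactly when the vertex set of $\sigma$ is contained in that of $\alpha$; in that event $\sigma = \alpha_{\langle i\rangle}$ for a unique $i$ and $\sigma^{*}(\partial\alpha)=(-1)^i$. Because $\alpha$ is oriented and subcomplexes inherit the ordering, the condition ``vertex sets comparable'' already determines all signs.

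First I would dispatch the vanishing case: suppose $\sigma$ is not a face of $\alpha$, so some vertex $\sigma_j$ does not appear among $\alpha_0,\ldots,\alpha_{n+1}$. If $j\leq k-1$ then $\sigma_j$ is a vertex of $\sigma_{\leq k-1}$ but not of $\alpha_{\leq k}$, so $\sigma_{\leq k-1}^{*}(\partial\alpha_{\leq k})=0$; if $j\geq k$, then symmetrically $\sigma_{\geq k}^{*}(\partial\alpha_{\geq k})=0$. Either way the right-hand side vanishes, matching the left.

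Next I would handle the non-vanishing case by writing $\sigma=\alpha_{\langle i\rangle}$ and splitting on the position of the omitted vertex relative to $k$. When $i<k$, a direct index count (using that the relabelling shift happens at position $i$) gives $\sigma_{\leq k-1}=(\alpha_{\leq k})_{\langle i\rangle}$ and $\sigma_{\geq k}=(\alpha_{\geq k})_{\langle 0\rangle}$, producing $(-1)^i\cdot 1=(-1)^i$. When $i\geq k$, the missing vertex lies in the back block, giving $\sigma_{\leq k-1}=(\alpha_{\leq k})_{\langle k\rangle}$ and $\sigma_{\geq k}=(\alpha_{\geq k})_{\langle i-k\rangle}$, so the product becomes $(-1)^k\cdot(-1)^{i-k}=(-1)^i$. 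Both subcases agree with $\sigma^{*}(\partial\alpha)$.

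The main pitfall will be the bookkeeping of indices after removing one vertex across the cut at position $k$; I would guard against it by explicitly writing out the vertex list of $\sigma$ obtained from that of $\alpha$ by deleting $\alpha_i$, and then intersecting with the front and back ranges $\{0,\ldots,k\}$ and $\{k,\ldots,n+1\}$. The hypothesis $1\leq k\leq n$ guarantees that both $\sigma_{\leq k-1}$ and $\sigma_{\geq k}$ are genuine non-empty faces, so no degenerate collapsing of one of the factors occurs and the above case analysis is exhaustive.
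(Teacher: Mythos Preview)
Your argument is correct and coincides with the paper's proof almost line for line: the paper also observes that $\sigma^*(\partial\alpha)=(-1)^j$ precisely when $\sigma=\alpha_{\langle j\rangle}$, reduces the vanishing case to one factor on the right being zero, and then splits into the two subcases (paper: $j\leq k$ versus $j\geq k$; you: $i<k$ versus $i\geq k$) yielding the same front/back face identifications and sign computations. The only cosmetic differences are that the paper argues the vanishing case contrapositively (if both factors are nonzero then $\sigma<\alpha$) and allows the two subcases to overlap at $j=k$.
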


\begin{proof}
Note that \begin{equation}
	\smallsimplex^*(\partial \bigsimplex)=\begin{cases}
		(-1)^j &\mbox{ if }\smallsimplex=\bigsimplex_{\langle j \rangle}\\ 0& \mbox{ otherwise.}
	\end{cases}	
	\end{equation}
If both $\sigma_{\leq{k-1}}^*(\partial\alpha_{\leq k})$ and $\sigma_{\geq k}^*(\partial \alpha_{\geq k})$ are nonzero, then $\sigma_{\leq{k-1}}$ and $\sigma_{\geq k}$ are faces of $\alpha$, so $\sigma < \alpha$ and $\sigma=\alpha_{\langle j \rangle}$ for some $j$. If $j \leq k$ then $\sigma_{\leq k-1}=(\alpha_{\leq k})_{\langle j \rangle}$ and $\sigma_{\geq k} =(\alpha_{\geq k})_{\langle 0 \rangle}$, therefore \[\sigma_{\leq{k-1}}^*(\partial\alpha_{\leq k})\sigma_{\geq k}^*(\partial \alpha_{\geq k})=(-1)^j(-1)^0=(-1)^j.\] If $j \geq k$ then $\sigma_{\leq {k-1}}=(\alpha_{\leq k})_{\langle k \rangle}$ and $\sigma_{\geq k}=(\alpha_{\geq k})_{\langle j-k \rangle}$, therefore
\[\sigma_{\leq{k-1}}^*(\partial\alpha_{\leq k})\sigma_{\geq k}^*(\partial \alpha_{\geq k})=(-1)^k(-1)^{j-k}=(-1)^j.\]
\end{proof}

\begin{lemma}\label{Lambdalowpower}
	Let $\smallsimplex \under \bigsimplex\in D^l_k$ and $1\leq m\leq l$. Then
	$$
	\Lambda^m(\smallsimplex \under \bigsimplex)= \smallsimplex_{\geq l-m+1}^*( \partial \bigsimplex_{\geq k-m}) \smallsimplex_{\leq l-m}\under \bigsimplex_{k-m}.
	$$
	Here $\partial$ denotes the usual chain differential.
\end{lemma}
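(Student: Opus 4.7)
The plan is to prove the identity by induction on $m$, using Lemma~\ref{Lambdaongens} at each step and Lemma~\ref{niceidentity} to simplify the resulting sign factors.

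For the base case $m=1$, the right hand side reads $\smallsimplex_{\geq l}^*(\partial \bigsimplex_{\geq k-1})\, \smallsimplex_{\leq l-1}\under \bigsimplex_{\leq k-1}$. Since $\smallsimplex_{\geq l}$ is the single vertex $\smallsimplex_l$ and $\bigsimplex_{\geq k-1}$ is the edge $[\bigsimplex_{k-1},\bigsimplex_k]$, whose boundary is $\bigsimplex_k - \bigsimplex_{k-1}$, the scalar factor is $\delta_{\smallsimplex_l,\bigsimplex_k}-\delta_{\smallsimplex_l,\bigsimplex_{k-1}}$, recovering exactly the $l>0$ formula of Lemma~\ref{Lambdaongens}.

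For the inductive step, assume $1\leq m<l$ and the formula holds for $m$. Then $\Lambda^m(\smallsimplex\under\bigsimplex)$ is a scalar multiple of $\smallsimplex_{\leq l-m}\under\bigsimplex_{\leq k-m}$, which lies in $D^{l-m}_{k-m}$ with $l-m\geq 1$. Applying Lemma~\ref{Lambdaongens} in the $l>0$ case gives
\begin{equation*}
\Lambda(\smallsimplex_{\leq l-m}\under\bigsimplex_{\leq k-m}) = \bigl(\delta_{\smallsimplex_{l-m},\bigsimplex_{k-m}}-\delta_{\smallsimplex_{l-m},\bigsimplex_{k-m-1}}\bigr)\,\smallsimplex_{\leq l-m-1}\under\bigsimplex_{\leq k-m-1},
\end{equation*}
so $\Lambda^{m+1}(\smallsimplex\under\bigsimplex)$ equals the product of this factor with $\smallsimplex_{\geq l-m+1}^*(\partial \bigsimplex_{\geq k-m})$ times $\smallsimplex_{\leq l-m-1}\under\bigsimplex_{\leq k-m-1}$. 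To match the claimed formula for $m+1$, I need to show
\begin{equation*}
\bigl(\delta_{\smallsimplex_{l-m},\bigsimplex_{k-m}}-\delta_{\smallsimplex_{l-m},\bigsimplex_{k-m-1}}\bigr)\,\smallsimplex_{\geq l-m+1}^*(\partial \bigsimplex_{\geq k-m})=\smallsimplex_{\geq l-m}^*(\partial \bigsimplex_{\geq k-m-1}).
\end{equation*}
This is exactly the conclusion of Lemma~\ref{niceidentity} applied to the $m$-simplex $\smallsimplex_{\geq l-m}$ and the $(m+1)$-simplex $\bigsimplex_{\geq k-m-1}$ with $k=1$: the first factor in that lemma is $\smallsimplex_{l-m}^*(\partial[\bigsimplex_{k-m-1},\bigsimplex_{k-m}])=\delta_{\smallsimplex_{l-m},\bigsimplex_{k-m}}-\delta_{\smallsimplex_{l-m},\bigsimplex_{k-m-1}}$, and the second is $\smallsimplex_{\geq l-m+1}^*(\partial \bigsimplex_{\geq k-m})$.

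The main obstacle is purely bookkeeping: tracking which indices shift by one each application of $\Lambda$ and verifying that Lemma~\ref{niceidentity} fuses the newly produced scalar with the inherited one into a single evaluation of a coboundary. The bound $m\leq l$ ensures that we stay in the $l>0$ branch of Lemma~\ref{Lambdaongens} throughout the induction, which is what allows the clean coboundary-type recursion; the separate $l=0$ behaviour of $\Lambda$ is only invoked when one eventually computes $\Lambda^{m}$ with $m>l$, which is handled in the subsequent step of the argument rather than here.
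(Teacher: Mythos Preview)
Your proof is correct and follows essentially the same approach as the paper: induction on $m$, with the base case coming from Lemma~\ref{Lambdaongens} and the inductive step combining one more application of Lemma~\ref{Lambdaongens} with the $k=1$ case of Lemma~\ref{niceidentity} applied to $\smallsimplex_{\geq l-m}$ and $\bigsimplex_{\geq k-m-1}$. Your write-up is in fact slightly more explicit than the paper's in unpacking the Kronecker-delta factor and identifying the two sides of Lemma~\ref{niceidentity}.
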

\begin{proof}
	We will use induction on $m$. The base step is Lemma~\ref{Lambdaongens}, observing that
	$$
	\smallsimplex_{l}^* (\partial\bigsimplex_{\geq k-1})=\delta_{\smallsimplex_l, \bigsimplex_k}- \delta_{\smallsimplex_l, \bigsimplex_{k-1}}.
	$$
	For the induction step, assume we have established the formula for $m<l$. Then
	\begin{align*}
	\Lambda^{m+1}(\smallsimplex\under \bigsimplex)&= \smallsimplex_{\geq l-m+1}^*( \partial\bigsimplex_{\geq k-m}) \Lambda(\smallsimplex_{\leq l-m}\under \bigsimplex_{\leq k-m})\\
	&= \smallsimplex_{l-m}^* (\partial(\bigsimplex_{k-m-1}\star \bigsimplex_{k-m})) \smallsimplex_{\geq l-m+1}^*(\partial \bigsimplex_{\geq k-m}) \smallsimplex_{\leq l-m-1}\under \bigsimplex_{\leq k-m-1}.
	\end{align*}
	Now apply the $k=1$ version of Lemma~\ref{niceidentity} with $\sigma_{\geq l-m}$ and $\alpha_{\geq k-m-1}$.
\end{proof}

Higher powers of $\Lambda$ give:
\begin{proposition}\label{Lambdapower}
	For $\smallsimplex\under \bigsimplex \in D^l_k$ we have
	$$
	\Lambda^{m}(\smallsimplex\under\bigsimplex)= \begin{cases}
		\sum_{j=0}^{k} \bigsimplex_{j}\under \bigsimplex_{\leq k-l} &\mbox{ if }\smallsimplex=\bigsimplex_{\geq k-l}\\
		0 & \mbox{ otherwise.}
	\end{cases}
	$$
	for all $m\geq l+1$. 
\end{proposition}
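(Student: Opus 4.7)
The plan is to first establish the case $m = l+1$ by applying $\Lambda$ once to the formula from Lemma~\ref{Lambdalowpower} at $m = l$, and then to observe that the result lies in $D^0_{k-l}$, where $\Lambda$ is idempotent by Lemma~\ref{Lambdaproj}. This idempotence immediately promotes the $m = l+1$ identity to the claim for all $m \geq l+1$.

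First I would specialise Lemma~\ref{Lambdalowpower} to $m = l$, obtaining
\begin{equation*}
	\Lambda^l(\smallsimplex\under\bigsimplex) = \smallsimplex_{\geq 1}^*(\partial \bigsimplex_{\geq k-l})\, \smallsimplex_0 \under \bigsimplex_{\leq k-l} \in D^0_{k-l}.
\end{equation*}
Since this lives in the bottom row $D^0_\bullet$, I can apply the $l = 0$ clause of Lemma~\ref{Lambdaongens} to compute
\begin{equation*}
	\Lambda^{l+1}(\smallsimplex\under\bigsimplex) = \smallsimplex_{\geq 1}^*(\partial \bigsimplex_{\geq k-l})\, \delta_{\smallsimplex_0,\bigsimplex_{k-l}} \sum_{j=0}^{k} \bigsimplex_j \under \bigsimplex_{\leq k-l},
\end{equation*}
where I use that the last vertex of $\bigsimplex_{\leq k-l}$ is $\bigsimplex_{k-l}$, and that $\bigsimplex_j \under \bigsimplex_{\leq k-l} = 0$ unless $j \leq k-l$ (so extending the range to $k$ is harmless).

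The key step is then to show that the coefficient $\smallsimplex_{\geq 1}^*(\partial \bigsimplex_{\geq k-l})\, \delta_{\smallsimplex_0,\bigsimplex_{k-l}}$ equals $1$ precisely when $\smallsimplex = \bigsimplex_{\geq k-l}$ and vanishes otherwise. The ``if'' direction is immediate: when $\smallsimplex = \bigsimplex_{\geq k-l}$, one has $\smallsimplex_0 = \bigsimplex_{k-l}$ and $\smallsimplex_{\geq 1} = (\bigsimplex_{\geq k-l})_{\langle 0 \rangle}$, so the pairing evaluates to $(-1)^0 = 1$. For the ``only if'' direction, if the coefficient is nonzero then $\smallsimplex_0 = \bigsimplex_{k-l}$ and $\smallsimplex_{\geq 1} = (\bigsimplex_{\geq k-l})_{\langle j \rangle}$ for some $j$. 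For $j > 0$ the face $(\bigsimplex_{\geq k-l})_{\langle j \rangle}$ still contains $\bigsimplex_{k-l}$ as its first vertex, forcing $\smallsimplex_1 = \bigsimplex_{k-l} = \smallsimplex_0$, which is impossible in a non-degenerate simplex; hence $j = 0$, which forces $\smallsimplex = \bigsimplex_{\geq k-l}$. I expect this bookkeeping with the front/back-face notation to be the only subtle part; the rest is formal.

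Having established the formula for $m = l+1$, the induction step $m \to m+1$ is free: the output lies in $D^0_{k-l}$, and by Lemma~\ref{Lambdaproj} the map $\Lambda$ acts as the identity on its image inside $D^0_\bullet$. Therefore $\Lambda^m(\smallsimplex\under\bigsimplex) = \Lambda^{l+1}(\smallsimplex\under\bigsimplex)$ for every $m \geq l+1$, completing the proof.
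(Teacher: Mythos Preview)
Your proposal is correct and follows essentially the same route as the paper: specialise Lemma~\ref{Lambdalowpower} at $m=l$, apply the $l=0$ case of Lemma~\ref{Lambdaongens} once more, and then invoke the idempotence of $\Lambda$ on $D^0_\bullet$ (Lemma~\ref{Lambdaproj}) to pass from $m=l+1$ to all $m\geq l+1$. Your treatment of the coefficient is in fact slightly more careful than the paper's, since you explain why the sign must be $+1$ (only the $j=0$ face can occur), whereas the paper just asserts that $\smallsimplex_{\geq 1}=\bigsimplex_{\geq k-l+1}$ is forced.
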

\begin{proof}
	We just need to apply $\Lambda$ to the expression for $\Lambda^l$ from Lemma~\ref{Lambdalowpower}, using Lemma~\ref{Lambdaongens}. This gives
	$$
	\Lambda^{l+1}(\smallsimplex\under \bigsimplex)= \smallsimplex_{\geq 1}^*( \partial\bigsimplex_{\geq k-l}) \Lambda(\smallsimplex_{0}\under \bigsimplex_{\leq k-l})=\smallsimplex_{\geq 1}^*( \partial\bigsimplex_{\geq k-l})\smallsimplex_0^*(\bigsimplex_{k-l}) \sum_{j=0}^{k-l}\bigsimplex_{j}\under \bigsimplex_{\leq k-l}.
	$$
	For this to be non-zero we need $\smallsimplex_0=\bigsimplex_{k-l}$. It follows that $\smallsimplex_{\geq 1}^*(\partial \bigsimplex_{\geq k-l})$ is nonzero only if  $\smallsimplex_{\geq 1}=\bigsimplex_{\geq k-l+1}$. These together give $\smallsimplex=\bigsimplex_{\geq k-l}$, and we are done.
\end{proof}

\subsubsection{The product Mayer--Vietoris complex}\label{prodmvcompsec}
As the arguments used in the proofs above are chain-by-chain, they also apply to the double complex $D^{\sqcap}$ which is defined analogously to $D$ with products instead of direct sums. This gives:
\begin{corollary}\label{locallyfinitecor}
	The morphism 
	$$
	\mathfrak{c}^{\sqcap}\colon (D^{\sqcap})^\bullet_\bullet \rar C_\bullet^\textnormal{lf}(X,L^\textnormal{vc})
	$$
	defined by the same formula as $\mathfrak{c}$ from Theorem~\ref{standardexactrowspecseq} induces an isomorphism
	$$
	H_*(\mathbf{Tot}D^{\sqcap}) \cong H_*^\textnormal{lf}(X,L^\textnormal{vc}).
	$$
\end{corollary}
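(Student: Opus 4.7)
The plan is to observe that essentially every argument in the proof of Theorem~\ref{standardexactrowspecseq} proceeds componentwise with respect to the decomposition $D^l_k=\bigoplus_{\smallsimplex\in L_l}C_k(X,X-\ostar\smallsimplex)$ and therefore transfers verbatim to the corresponding product $(D^{\sqcap})^l_k=\prod_{\smallsimplex\in L_l}C_k(X,X-\ostar\smallsimplex)$. The only genuine checks needed are that the augmentation map and the explicit inverse $\mathfrak{c}^{\sqcap}$ really land in the right modules; both reduce to local finiteness of $X$.

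First I would verify well-definedness of the relevant maps. For a locally finite chain $\chain\in C_k^{\lf}(X,L^{\vc})$ and a vertex $v\in L$, the $v$-component $\epsilon_v(\chain)\in C_k(X,X-\ostar v)$ is supported on $k$-simplices containing $v$; since $\st v$ is finite by local finiteness of $X$, this is a finite sum and hence a bona fide element of $C_k(X,X-\ostar v)$. Thus $\epsilon$ extends to a chain map $C_\bullet^{\lf}(X,L^{\vc})\to (D^{\sqcap})^0_\bullet$. Dually, for $\shcochain\in (D^{\sqcap})^l_k$ the $\smallsimplexone$-coefficient of $\mathfrak{c}^{\sqcap}(\shcochain)$ for a $(k-l)$-simplex $\smallsimplexone$ is a sum over $k$-simplices $\bigsimplex$ with $\bigsimplex_{\leq k-l}=\smallsimplexone$; all such $\bigsimplex$ lie in $\st\smallsimplexone_0$ and so there are only finitely many. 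Hence the output is a well-defined locally finite chain, yielding $\mathfrak{c}^{\sqcap}\colon \mathbf{Tot}D^{\sqcap}\to C_\bullet^{\lf}(X,L^{\vc})$.

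Next I would copy the proof of Proposition~\ref{doubleexactrows}. The null-homotopy $\tilde\lambda$ built from $\lambda$ (Definition~\ref{lastvertexliftdef}) and $\kappa$ (Equation~\eqref{kappadef}) is defined by componentwise formulas on the indexing pairs $(\smallsimplex,\bigsimplex)$, so it extends automatically to the product complex; the identity $\tilde\lambda\tilde{i}+\tilde{i}\tilde\lambda=\id$ (Equation~\eqref{lambdanullhty}) is verified pointwise on each such pair in Lemmas~\ref{commilambda} and~\ref{kappaepsilon}, and thus persists. It follows that the augmented row complex
$$
0\to C_k^{\lf}(X,L^{\vc})\xrightarrow{\epsilon}(D^{\sqcap})^0_k\xrightarrow{i^0}(D^{\sqcap})^1_k\xrightarrow{i^1}\cdots
$$
is exact for every $k$. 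The associated row spectral sequence of the augmented product double complex then collapses to give an isomorphism $H_*^{\lf}(X,L^{\vc})\cong H_*(\mathbf{Tot}D^{\sqcap})$ induced by $\epsilon$.

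Finally, to identify this abstract isomorphism with $\mathfrak{c}^{\sqcap}_*$ I would repeat the argument following Theorem~\ref{standardexactrowspecseq}. The equality $\mathfrak{c}^{\sqcap}=\kappa\Lambda^m$ for $m>\dim(X)$ holds because $\Lambda$ acts by the pointwise formulas of Lemma~\ref{Lambdaongens} and Proposition~\ref{Lambdapower}, and the Python Proposition~\ref{pythonlemma}---whose proof is equally pointwise---implies that $\Lambda^m$ projects $\mathbf{Tot}D^{\sqcap}$ onto $\ker i^0$, on which $\kappa$ is a chain map by (the product analog of) Lemma~\ref{kappachainmap}. The identity $\mathfrak{c}^{\sqcap}\epsilon=\id$ is already true at the chain level, and $\epsilon\mathfrak{c}^{\sqcap}=\Lambda^m$ is chain homotopic to the identity by construction of $\Lambda$. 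The only potential obstacle anywhere in this argument is the well-definedness of $\mathfrak{c}^{\sqcap}$, handled above by local finiteness; everything else is a formal transcription.
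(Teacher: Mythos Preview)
Your proposal is correct and follows exactly the approach the paper takes: the paper's entire proof is the sentence preceding the corollary, namely that ``the arguments used in the proofs above are chain-by-chain, they also apply to the double complex $D^{\sqcap}$.'' You have simply unpacked what ``chain-by-chain'' means and added the well-definedness checks via local finiteness that the paper leaves implicit.
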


In particular, $\mathbf{Col}D$ and $\mathbf{Col}D^\sqcap$ abut to $H_{*}(X,L^\textnormal{vc})$ and $H_{*}^\textnormal{lf}(X,L^\textnormal{vc})$ respectively, and in the Cohen--Macaulay case this gives an isomorphism between the relative homology of $X$ and $L^\textnormal{vc}$ and the cohomology of the local homology sheaf over $L$. Comparing Equations~\eqref{cupversion1} and~\eqref{doublecomplexhomiso}, we see that this isomorphism is closely related to the cap product, and we will show below that it is in fact given by capping with a fundamental class.

\subsubsection{The Mayer--Vietoris complex with coefficients}\label{s:mvwithcoefs}
Applying $\G$-transport (Definition~\ref{d:gtransport}) to the MV-double complex we get
$$
D^l_k(L;\mathcal{G})=\bigoplus_{\smallsimplex \in L_l}C_k(X, X- \ostar \smallsimplex;\mathcal{G}),
$$
Applying $\mathcal{G}$-transport to  Theorem~\ref{standardexactrowspecseq} shows that $\mathfrak{c}_{\mathcal{G}}\colon \mathbf{Tot}D(L;\mathcal{G}) \rar C_*(X,L\vc;\mathcal{G})$ given on $ \smallsimplex\under \psi_\bigsimplex \in D^l_k(L;\mathcal{G})$ by 
$$
\mathfrak{c}_{\mathcal{G}}( \smallsimplex\under \psi_\bigsimplex)= \begin{cases}
	\mathcal{G}(\bigsimplex> \bigsimplex_{\leq k-l}) \psi_\bigsimplex &\mbox{ for }\smallsimplex=\bigsimplex_{\geq k-l}\\ 0 \mbox{ otherwise }
\end{cases}
$$
is a chain homotopy inverse to $\epsilon_\mathcal{G}$. A similar argument holds for the product MV complex.

\subsubsection{The dual Mayer--Vietoris complex}\label{dualsection}
We now establish the analogue of Theorem~\ref{standardexactrowspecseq} needed for the duality statements involving the second version of the cap product (Equation~\eqref{capnopairing}). To do this, we consider the dual double complex of $R$-bimodules
$$
B^\star_\bullet(L)=\hom_{\rmod}(D^\bullet_\star(L),R)
$$
obtained by taking left $R$-linear maps. Observe that this is equivalently the $\underline{R}$-dual (Definition~\ref{d:fdual}) of the MV double complex where $\underline{R}$ is the constant sheaf with stalks $R$. Note that 
$$
B^k_l=\prod_{\smallsimplex \in L_l} C^k(X,X-\ostar\smallsimplex ).
$$
Observe that the column spectral sequence has
$$
\mathbf{Col}B_{1,l}^k= H^k(\prod_{\smallsimplex \in L_l}C^k(X,X-\ostar \smallsimplex))\cong \prod_{\smallsimplex \in L_l} h^k(\smallsimplex)
$$
as its $E_1$-page, and the $E_2$-page is therefore
$$
\mathbf{Col}B_{2,l}^k= H_l^{\lf}(L,h^k).
$$
We will show that this abuts to $H^*(X,L^\textnormal{vc})$, and establish the analogous statement for compactly supported cohomology and cosheaf homology. In doing this, we will also use the double complex
$$
(B_c(L))^k_l= \bigoplus_{\smallsimplex \in L_l}C^k(X,X-\ostar \smallsimplex),
$$
for which the $E^2$-page of the column spectral sequence is $H_l(L, h^k)$.

\begin{theorem}\label{othercoefctheorem}
	Let $L$ be a full subcomplex of a locally finite oriented simplicial complex $X$. Define the maps
	\begin{align*}
		\mathfrak{c}^\vee\colon& C^q(X, L^\textnormal{vc}) \rar \mathbf{Tot}B^q		\\
		\mathfrak{c}^\vee_c \colon& C^q_c(X, L^\textnormal{vc}) \rar (\mathbf{Tot}B_c)^q 
	\end{align*}
	on generators by sending for a $q$-simplex $\bigsimplexone$ with at least one vertex in $L$ the generator $\bigsimplexone^*\in C^q(X,L^\textnormal{vc})$ to
	\begin{equation}\label{cveedef}
		\mathfrak{c}^\vee(\bigsimplexone^*)=\mathfrak{c}^\vee_c(\bigsimplexone^*)=\sum_{k=q}^\infty\sum_{\bigsimplex\in L_k} \bigsimplexone^*(\bigsimplex_{\leq q}) \bigsimplex_{\geq q}\under \bigsimplex^*.
	\end{equation}
	These induce natural isomorphisms of $R$-bimodules
	\begin{align*}
		H^q(X, L^\textnormal{vc}) &\cong H^q\mathbf{Tot}B	\\
	 H^q_c(X, L^\textnormal{vc}) &\cong H^q\mathbf{Tot}B_c.
	\end{align*}
\end{theorem}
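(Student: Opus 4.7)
The approach is to dualize Theorem~\ref{standardexactrowspecseq} and its proof. First I would observe that $\mathfrak{c}^\vee$ is, up to the canonical identification of $C^q(X,L\vc)$ with $\hom_{\rmod}(\bar{C}_q(X,L\vc),R)$, the $\hom(-,R)$-dual of the chain map $\mathfrak{c}\colon\mathbf{Tot}D\to\bar{C}_\bullet(X,L\vc)$: evaluating $\bigsimplexone^*(\mathfrak{c}(\smallsimplex\under\bigsimplex))$ for $\bigsimplexone$ a $q$-simplex is nonzero precisely when $\bigsimplex_{\leq q}=\bigsimplexone$ and $\smallsimplex=\bigsimplex_{\geq q}$, reproducing the coefficients of Equation~\eqref{cveedef}. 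The analogous identification holds for $\mathfrak{c}^\vee_c$.

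Next I would establish exactness of the dualized augmented rows. For fixed $k$, applying $\hom_{\rmod}(-,R)$ to the row-exact sequence of Proposition~\ref{doubleexactrows} gives
\[
\cdots\to B^k_2\xrightarrow{(i^1)^*} B^k_1\xrightarrow{(i^0)^*}B^k_0\xrightarrow{\epsilon^*}C^k(X,L\vc)\to 0,
\]
and this remains exact because the null-homotopy $\tilde{\lambda}$ constructed in the proof of Proposition~\ref{doubleexactrows} is a module-level contraction, whose dual $\tilde{\lambda}^*$ is still a module-level contraction. For the compactly supported case, I would note that $\tilde{\lambda}$ (consisting of $\lambda$ and $\kappa$) has matrix entries in $\{0,\pm 1\}$ that are locally supported: for any generator, only finitely many other generators map to it under $\tilde{\lambda}$, by local finiteness of $X$. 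Hence the transpose of $\tilde{\lambda}$ is a well-defined contraction of the compactly supported version
\[
\cdots\to(B_c)^k_1\xrightarrow{(i^0)_c^*}(B_c)^k_0\xrightarrow{\epsilon_c^*}C_c^k(X,L\vc)\to 0,
\]
giving exactness here too.

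With exactness of the augmented rows in hand, the row spectral sequences of $\mathbf{Tot}B$ and $\mathbf{Tot}B_c$ collapse on the $E^2$-page onto the column containing $C^k(X,L\vc)$ (respectively $C_c^k(X,L\vc)$), yielding abstract isomorphisms $H^q(X,L\vc)\cong H^q\mathbf{Tot}B$ and $H^q_c(X,L\vc)\cong H^q\mathbf{Tot}B_c$ induced by $\epsilon^*$ and $\epsilon_c^*$. To identify $\mathfrak{c}^\vee$ (resp.\ $\mathfrak{c}^\vee_c$) as the inverse isomorphism, I would dualize the chain-level identities from the proof of Theorem~\ref{standardexactrowspecseq}: $\mathfrak{c}\epsilon=\id$ gives $\epsilon^*\mathfrak{c}^\vee=\id$ at the chain level, while $\epsilon\mathfrak{c}\sim\id$ via the chain homotopy $\lambda$ and the diagonal shift $\Lambda$ dualizes to $\mathfrak{c}^\vee\epsilon^*\sim\id$ via $\lambda^*$ and $\Lambda^*$. (Equivalently, one could set up the dual Python Proposition by working with the transposed diagonal shift.) Naturality with respect to inclusions of full subcomplexes follows by dualizing Lemma~\ref{cisnatural}, since the induced maps between $D^\bullet_\bullet(K)$ and $D^\bullet_\bullet(K')$ have integer matrix entries and transpose cleanly.

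The main obstacle is the bookkeeping around the compactly supported case: verifying that the null-homotopy restricts properly (which hinges on local finiteness of $X$) and that $\mathfrak{c}^\vee_c$ lands in $\mathbf{Tot}B_c$ rather than merely $\mathbf{Tot}B$. Once these finiteness considerations are settled, the rest of the argument is essentially a transposition of the proof of Theorem~\ref{standardexactrowspecseq}.
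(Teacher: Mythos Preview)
Your proposal is correct, and for the non-compactly-supported case it is essentially the paper's argument, though you take an unnecessary detour: once you have identified $\mathfrak{c}^\vee=\hom(\mathfrak{c},R)$ and observed that the chain homotopies $\mathfrak{c}\epsilon=\id$, $\epsilon\mathfrak{c}\sim\id$ dualize, you already have that $\mathfrak{c}^\vee$ is a chain homotopy equivalence. The exactness of the dualized rows and the row spectral sequence are then redundant---the paper simply invokes the fact that additive functors preserve chain homotopy equivalences (Corollary~\ref{cveeisocor}).

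For the compactly supported case the approaches genuinely differ. You argue directly that the transposed contracting homotopy $\tilde{\lambda}^*$ restricts to $B_c$, using that each target generator of $\tilde{\lambda}$ has finitely many preimages (which is correct: for $\lambda$ the preimage of $\sigma'\under\alpha'$ is the single element $(\sigma'\star\alpha'_k)\under\alpha'$, and for $\kappa$ it is $\alpha'_k\under\alpha'$). The paper instead avoids this verification entirely: it observes that for finite full subcomplexes $K$ one has $B(K)=B_c(K)$, so $\mathfrak{c}^\vee_{c,K}$ is an isomorphism by the non-compact case, and then passes to the colimit over finite $K\subset L$ using naturality (Lemma~\ref{cisnatural}). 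Your route is more hands-on and keeps everything at the level of explicit chain maps; the paper's colimit argument is cleaner and automatically handles the finiteness bookkeeping you flag as the main obstacle.
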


We will prove this in several steps. First, we establish that $\mathfrak{c}^\vee$ is dual to $\mathfrak{c}$. From this the statement about non-compactly supported cohomology follows, and hence also the statement for compactly supported cohomology when $L$ is a finite subcomplex. For infinite subcomplexes we complete the argument using a limit over finite subcomplexes.

\begin{lemma}\label{cduallemma}
	The dual $\hom(\dcap, R)$ of $\mathfrak{c}$ from Theorem~\ref{standardexactrowspecseq} is $\mathfrak{c}^\vee$ from Theorem~\ref{othercoefctheorem}.
\end{lemma}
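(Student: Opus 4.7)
The plan is a direct chain-level verification. Both $\hom(\dcap, R)$ and $\mathfrak{c}^\vee$ are $R$-bimodule maps $C^q(X, L\vc) \rar \mathbf{Tot}B^q$, so by linearity it suffices to check they agree on a basis element $\bigsimplexone^* \in C^q(X, L\vc)$ with $\bigsimplexone$ a $q$-simplex having at least one vertex in $L$. I would compare the two images in $\mathbf{Tot}B^q$ by pairing each against an arbitrary basis chain $\smallsimplex \under \bigsimplex \in D^l_{q+l}$ (with $\smallsimplex \in L_l$ and $\smallsimplex \leq \bigsimplex \in X_{q+l}$) and checking the two pairings agree.

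For $\hom(\dcap, R)(\bigsimplexone^*)$ evaluated on $\smallsimplex \under \bigsimplex$, the pairing is $\bigsimplexone^*(\dcap(\smallsimplex \under \bigsimplex))$. The explicit formula in Theorem~\ref{standardexactrowspecseq} gives $\dcap(\smallsimplex \under \bigsimplex) = \bigsimplex_{\leq q}$ (viewed in $C_q(X, L\vc)$) when $\smallsimplex = \bigsimplex_{\geq q}$ and zero otherwise, so the pairing returns $\bigsimplexone^*(\bigsimplex_{\leq q})$ or zero. For $\mathfrak{c}^\vee(\bigsimplexone^*)$ evaluated on the same chain, I would unfold Equation~\eqref{cveedef} and apply the duality $(\bigsimplex'_{\geq q} \under (\bigsimplex')^*)(\smallsimplex \under \bigsimplex) = \delta_{\smallsimplex,\, \bigsimplex'_{\geq q}}\, \delta_{\bigsimplex,\, \bigsimplex'}$ to extract the unique summand with $\bigsimplex' = \bigsimplex$ and $\bigsimplex'_{\geq q} = \smallsimplex$. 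This again gives $\bigsimplexone^*(\bigsimplex_{\leq q})$ when $\smallsimplex = \bigsimplex_{\geq q}$ and zero otherwise. Since the two pairings coincide on every basis chain of $\mathbf{Tot}D_q$, the two elements of $\mathbf{Tot}B^q$ are equal.

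The main obstacle is reconciling the ranges of summation: Equation~\eqref{cveedef} sums over $\bigsimplex \in L_k$, whereas the natural dualization of $\dcap$ yields a sum over all $\bigsimplex \in X_k$ for which $\bigsimplex_{\geq q}$ lies in $L$. To align them, I would use the $L\vc$-before-$L$ orientation convention combined with Lemma~\ref{basicorderingconsequences} to show that any ``extra'' term (where $\bigsimplex \in X_k \smallsetminus L_k$ but $\bigsimplex_{\geq q} \in L$) has front face $\bigsimplex_{\leq q}$ supported in $L\vc$, so that $\bigsimplexone^*(\bigsimplex_{\leq q}) = 0$ by virtue of $\bigsimplexone^*$ vanishing on $C_q(L\vc)$. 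This bookkeeping step is the only delicate part of the argument; once it is in place the equality of the two maps is immediate from the pairing computation above.
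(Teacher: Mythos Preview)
Your approach is the same as the paper's: both evaluate on a generator $\bigsimplexone^*$ by pairing against an arbitrary basis chain $\smallsimplex\under\gamma$ of $D$, and both arrive at $\bigsimplexone^*(\gamma_{\leq q})\,\delta_{\gamma_{\geq q},\smallsimplex}$ for each side. The core computation is correct and matches the paper's line by line.

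Your handling of the summation-range discrepancy, however, does not work. If $\bigsimplex\in X_k\smallsetminus L_k$ with $\bigsimplex_{\geq q}\in L$, then in particular $\bigsimplex_q\in L$ (it is the first vertex of $\bigsimplex_{\geq q}$), so $\bigsimplex_{\leq q}$ always has its last vertex in $L$ and is never in $L\vc$; thus $\bigsimplexone^*(\bigsimplex_{\leq q})$ need not vanish and Lemma~\ref{basicorderingconsequences} does not rescue you. The discrepancy you noticed is real but is simply a typo in Equation~\eqref{cveedef}: the sum should run over $\bigsimplex\in X_k$ (or equivalently over those $\bigsimplex$ with $\bigsimplex_{\geq q}\in L$), not over $L_k$. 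The paper's own proof tacitly assumes this---its last displayed line sets $\bigsimplex=\gamma$ via the Kronecker delta with no hypothesis that $\gamma\in L$. With that correction there is no obstacle and your argument goes through verbatim.
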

\begin{proof}
	We just need to check this for a generator $\bigsimplexone^*\in C^q(X,L^\textnormal{vc})$ like in the statement of Theorem~\ref{othercoefctheorem}. Evaluating $\mathfrak{c}^\vee(\bigsimplexone^*)$ on $\smallsimplex \under \gamma \in D_l^{k}$ gives
	\begin{align*}
		\mathfrak{c}^\vee(\bigsimplexone^*)(\smallsimplex\under \gamma)	&=\sum_{k'=q}^\infty \sum_{\bigsimplex\in L_{k'}} \bigsimplexone^*(\bigsimplex_{\leq q }) (\bigsimplex_{\geq q}\under \bigsimplex^*)(\smallsimplex \under \gamma)\\
														&=\sum_{k'=q}^\infty\sum_{\bigsimplex\in L_{k'}} \bigsimplexone^*(\bigsimplex_{\leq q}) \delta_{\bigsimplex_{\geq q},\smallsimplex}\delta_{\bigsimplex, \gamma}\\
														&= \bigsimplexone^*(\gamma_{\leq q})\delta_{\gamma_{\geq q},\smallsimplex}.
	\end{align*}
	For $\delta_{\gamma_{\geq q},\smallsimplex}$ to be non-zero $\gamma_{\geq q}$ needs to be an $l$-simplex, meaning that $q=k-l$. The image of $\bigsimplexone$ under the dual of $\mathfrak{c}$ evaluated on the same element is
	\begin{align*}
		\bigsimplexone^*(\mathfrak{c}(\smallsimplex \under \gamma)) &=\bigsimplexone^*(\sum_{\bigsimplex \in X_{k}}\delta_{\smallsimplex, \bigsimplex_{\geq k-l}}\delta_{\bigsimplex,\gamma}\bigsimplex_{\leq k-l})\\
													&=\bigsimplexone^{*}(\gamma_{\leq k-l})\delta_{\gamma_{\geq k-l},\smallsimplex},
	\end{align*}
	so the dual of $\mathfrak{c}$ and $\mathfrak{c}^\vee$ indeed agree.
\end{proof}

This immediately gives us:
\begin{corollary}\label{cveeisocor}
	The morphism $\mathfrak{c}^\vee$ induces an isomorphism in cohomology for all full subcomplexes of $X$.
\end{corollary}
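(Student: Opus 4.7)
The plan is to deduce this essentially formally from Theorem~\ref{standardexactrowspecseq} combined with Lemma~\ref{cduallemma}. By Theorem~\ref{standardexactrowspecseq} the chain map $\mathfrak{c}\colon \mathbf{Tot}D \to \bar{C}_\bullet(X,L\vc)$ is a chain homotopy equivalence with homotopy inverse $\epsilon$, so there exist chain homotopies $h$ and $k$ satisfying
\begin{align*}
\mathfrak{c}\circ \epsilon - \id &= \partial h + h \partial,\\
\epsilon \circ \mathfrak{c} - \id &= \partial k + k \partial.
\end{align*}

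Applying the additive contravariant functor $\hom_{\rmod}(-,R)$ to these identities preserves them (with the usual sign swap in the differentials), so $\epsilon^\vee$ and $\mathfrak{c}^\vee := \hom_{\rmod}(\mathfrak{c},R)$ are mutually inverse chain homotopy equivalences between the cochain complexes $\hom_{\rmod}(\bar{C}_\bullet(X,L\vc),R)$ and $\hom_{\rmod}(\mathbf{Tot}D,R)$. By Lemma~\ref{cduallemma} this dual $\mathfrak{c}^\vee$ agrees with the map of Theorem~\ref{othercoefctheorem}. The target $\hom_{\rmod}(\mathbf{Tot}D,R)$ identifies with $\mathbf{Tot}B$ via the standard isomorphism $\hom_{\rmod}\bigl(\bigoplus_{k-l=j} D^l_k,R\bigr)\cong \prod_{k-l=j} B^k_l$ (converting direct sums in homological degree into direct products in cohomological degree), while the source identifies with the usual cochain complex $C^\bullet(X,L\vc)$ once one accounts for the harmless sign twist in Equation~\eqref{modifieddiff}, which affects differentials but not cohomology.

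Since $\mathfrak{c}^\vee$ is now known to be a chain homotopy equivalence of cochain complexes, it induces isomorphisms on cohomology, yielding the claimed $H^q(X,L\vc)\cong H^q \mathbf{Tot}B$. Naturality with respect to inclusions $K \subset K'$ of full subcomplexes is inherited from the naturality of $\mathfrak{c}$ established in Lemma~\ref{cisnatural} upon dualising. The argument does not depend on $L$ being finite and no further calculation is required; the only mild bookkeeping is to match the sign conventions and to identify the various duals canonically, which is routine. In particular, the main (and only) step is the observation that dualising a chain homotopy equivalence yields a chain homotopy equivalence, and I do not expect any genuine obstacle.
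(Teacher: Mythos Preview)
Your proposal is correct and follows essentially the same approach as the paper: both arguments observe that $\mathfrak{c}$ is a chain homotopy equivalence by Theorem~\ref{standardexactrowspecseq}, and that applying the additive functor $\hom(-,R)$ preserves chain homotopy equivalences, so $\mathfrak{c}^\vee$ is one too. Your version spells out more of the bookkeeping (the explicit homotopies, the identification of $\hom(\mathbf{Tot}D,R)$ with $\mathbf{Tot}B$, the sign twist), but the substance is identical.
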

\begin{proof}
	By Theorem~\ref{standardexactrowspecseq}, $\dcap$ is a chain homotopy equivalence, so its image $\dcap^\vee$ under the additive functor $\hom(-, R)$ is also a chain homotopy equivalence.
\end{proof}

This gives us all we need to prove Theorem~\ref{othercoefctheorem}.

\begin{proof}[Proof of Theorem~\ref{othercoefctheorem}]
	The statement for $\dcap^\vee$ is the content of Corollary~\ref{cveeisocor}. For $\dcap^\vee_c$, let $K$ be a full finite subcomplex. Then the double complexes $B(K)$ and $B_c(K)$ agree, so by Corollary~\ref{cveeisocor} the map $\mathfrak{c}_{c,K}$ induces an isomorphism between $H_c^l(X,K\vc)$ and $H^l\mathbf{Tot}B$. Observe that if $K'$ is another finite full subcomplex and $\iota\colon K\hookrightarrow K'$ we have by dualizing Lemma~\ref{cisnatural} the commutative diagram
	$$
	\begin{tikzcd}
		H^*(X, K\vc) \arrow[r,"\mathfrak{c}^\vee_K", "\cong"'] &H^*(B_c(K)) \\		
		H^*(X, (K')\vc) \arrow[r,"\mathfrak{c}^\vee_{K'}", "\cong"']\arrow[u, "\iota^*"] &H^*(B_c(K')).\arrow[u,"\iota^*"]
	\end{tikzcd}
	$$
	For a full subcomplex $L$ taking a colimit over full finite subcomplexes of $L$ gives
	$$
	\colim_{K\subset L}H^*(X, K\vc) \cong H^*_c(X, L\vc)
	$$
	and
	$$
	\colim_{K\subset L} B_c(K) \cong B_c(L).
	$$
	Commutativity of the diagram above means that the $\mathfrak{c}^\vee_K$ induce $\mathfrak{c}_c^\vee$ in the colimit to give
	$$
	\mathfrak{c}_c^\vee\colon  H^*_c(X, L\vc)\xrightarrow{\cong}H^*(B_c(L)),
	$$
	which is what we wanted to show.
\end{proof}

The arguments in this section extend to the case where one replaces the contravariant additive functor of taking the $\underline{R}$-dual by taking the $\F$-dual for any sheaf $\F$ to show that the $\F$-dual $\dcap^\F$ of $\dcap$ induces isomorphisms
\begin{align*}
	H^q(X, L^\textnormal{vc};\F) &\cong H^q\mathbf{Tot}B^\F	\\
	H^q_c(X, L^\textnormal{vc};\F) &\cong H^q\mathbf{Tot}B^\F_c,
\end{align*}
where $B^\F_c$ is the obvious analogue of $B_c$ with coefficients in $\F$. 

\subsection{The CM Duality Theorem}\label{dualitysection}
In this section we prove the CM Duality Theorem (Theorem~\ref{dualitytheorem}). We first define a fundamental class $\fund{X}\in H^\lf_n(X;h^*)$ of an $n$-dimensional locally finite oriented simplicial complex $X$ and compute the cap product with this class (Lemma~\ref{capwithfund}). We compare this with the quasi-isomorphism $\dcap$ (from Theorem~\ref{standardexactrowspecseq}) and its dual $\dcap^\vee$ (from Theorem~\ref{othercoefctheorem}) in \textsection~\ref{fundclassvsdualsect}. When $X$ is locally CM, the CM Duality Theorem follows quickly in \textsection\ref{cmdualitysection}. In \textsection\ref{moduledualitysection} we show how to modify the proof to obtain CM duality for (co)homology with coefficients in (co)sheaves of $R$-modules.

\subsubsection{The fundamental class}\label{fundclass} 
If $X$ is a $n$-dimensional complex, we have that $h^n(\smallsimplex)=\langle \smallsimplex^* \rangle$ whenever $\smallsimplex$ is an $n$-simplex. This allows us to define:
\begin{definition}
	The \emph{fundamental class} $[\fund{X}]\in H^{\textnormal{lf}}_n(X,h^*)$ of an $n$-dimensional complex $X$ is the class of the ``diagonal'' locally finite cycle with coefficients in the local cohomology cosheaf given by
	\begin{equation}\label{fundamentclass}
		\fund{X}=\sum_{\bigsimplex \in X_n}\bigsimplex\under\bigsimplex^* \in \prod_{\bigsimplex \in X_n} h^*(\bigsimplex).
	\end{equation}
\end{definition}

To see that this chain is closed observe that at $\smallsimplex \in X_{n-1}$
$$
d\fund{X}_\smallsimplex = \sum_{\bigsimplex > \smallsimplex}[\smallsimplex: \bigsimplex]\smallsimplex\under \bigsimplex^{*}= \delta(\smallsimplex\under \smallsimplex^*) \in \textnormal{Im}(C^{n-1}(X, X-\ostar \smallsimplex) \xrightarrow{\delta}  C^{n}(X, X-\ostar \smallsimplex)),
$$
where $[\smallsimplex:\bigsimplex]=\pm 1$ tracks whether $\smallsimplex$ is an odd or even face of $\bigsimplex$.  

Capping with the fundamental class behaves quite nicely:

\begin{lemma}\label{capwithfund}
	Let $L$ be a subcomplex of an $n$-dimensional locally finite oriented simplicial complex $X$ with $L\vc$ before $L$ (Definition~\ref{vcandbeforedef}). Take $\shcochain \in C_c^l(L,h_n)$ and write $\shcochain=\sum \shcochain_{ \bigsimplex}^\smallsimplex \smallsimplex \under \bigsimplex$. Taking the relative cap product from Proposition~\ref{usefulrelativecups} with the fundamental class gives
	$$
	\fund{X}\cap \shcochain= \sum_{\bigsimplex \in X_n}\shcochain^{\bigsimplex_{\geq n-l}}_{\bigsimplex}\bigsimplex_{\leq n-l}\in C_{n-l}(X,L\vc).
	$$
	Similarly, for $\intcochain \in C^l(X,L)$ written as $\intcochain=\sum \intcochain_{ \bigsimplex}^\smallsimplex \smallsimplex \under \bigsimplex$ we have
	$$
	\fund{X}\cap \intcochain= \sum_{\bigsimplex \in X_n}\intcochain^{\bigsimplex_{\geq n-l}}_{\bigsimplex}\bigsimplex_{\leq n-l}\under \bigsimplex^*\in C_{n-l}(L\vc;h^n).
	$$
\end{lemma}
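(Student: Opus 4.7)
The plan is to prove both identities by direct computation, substituting the explicit formula
$$\fund{X}=\sum_{\bigsimplex \in X_n}\bigsimplex\under\bigsimplex^*$$
for the fundamental class into the chain-level cap product formulas from Section~\ref{capproductsection}, specialised to $k=n$. The membership of the output in the correct relative chain complex will be a direct consequence of Proposition~\ref{usefulrelativecups}, so the proof is essentially a bookkeeping exercise.

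For the first identity, I would apply Equation~\eqref{cupversion1}. Evaluating at $\shchain=\fund{X}$ with $k=n$, the only $n$-simplex $\bigsimplex$ at which $\shchain_\bigsimplex$ is nonzero contributes $\bigsimplex\under\bigsimplex^*$, and applying the sheaf map $h^{*}(\bigsimplex>\bigsimplex_{\geq n-l})$ gives $\bigsimplex_{\geq n-l}\under\bigsimplex^*$. Expanding $\shcochain_{\bigsimplex_{\geq n-l}}=\sum_{\bigsimplexone}\shcochain^{\bigsimplex_{\geq n-l}}_{\bigsimplexone}\bigsimplex_{\geq n-l}\under\bigsimplexone$ in the local homology basis, the pairing from Equation~\eqref{pairingone} picks out the $\bigsimplexone=\bigsimplex$ term, yielding the coefficient $\shcochain^{\bigsimplex_{\geq n-l}}_\bigsimplex$ multiplying $\bigsimplex_{\leq n-l}$, which is exactly the claimed formula. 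Equivalently, one can derive this line from the more elementary Equation~\eqref{cap1ongens} by summing over generators.

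For the second identity, I would use Equation~\eqref{cupversion2} (or Equation~\eqref{cap2ongens} on generators). Substituting $\fund{X}$, each $n$-simplex $\bigsimplex$ contributes
$$\intcochain(\bigsimplex_{\geq n-l})\,h^n(\bigsimplex>\bigsimplex_{\leq n-l})(\bigsimplex\under\bigsimplex^*) = \intcochain^{\bigsimplex_{\geq n-l}}_{\bigsimplex}\,\bigsimplex_{\leq n-l}\under\bigsimplex^*,$$
using the explicit description of the cosheaf maps of $h^n$ from \textsection\ref{lochomcompsection}, which gives the second formula directly.

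The only ``technical'' point is to confirm that $\fund{X}\cap\shcochain$ and $\fund{X}\cap\intcochain$ really lie in $C_{n-l}(X,L\vc)$ and $C_{n-l}(L\vc;h^n)$ respectively, rather than just in the absolute complexes. This is precisely what Proposition~\ref{usefulrelativecups} (applied through Equations~\eqref{relativecup2} and \eqref{relativecap3}) already guarantees, since $\fund{X}\in C^\lf_n(X;h^n)$ and the relative caps are defined exactly so as to respect the orientation convention ``$L\vc$ before $L$''. No new argument is needed and there is no real obstacle here; the lemma is a direct unwinding of the definitions once the fundamental class is in place.
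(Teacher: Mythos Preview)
Your proposal is correct and takes essentially the same approach as the paper: both substitute the diagonal representative $\fund{X}=\sum_{\bigsimplex\in X_n}\bigsimplex\under\bigsimplex^*$ into the chain-level formulas \eqref{cupversion1} and \eqref{cupversion2}, use the explicit description of the cosheaf maps to obtain $h^*(\bigsimplex>\bigsimplex_{\geq n-l})(\bigsimplex\under\bigsimplex^*)=\bigsimplex_{\geq n-l}\under\bigsimplex^*$, and then read off the coefficient via the pairing (or, for the second identity, directly). The only cosmetic difference is that the paper organises the computation generator-by-generator via \eqref{cap1ongens}, while you plug into the summed formula \eqref{cupversion1} first; the content is identical.
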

\begin{proof}
Capping with the representative of the fundamental class gives
\begin{equation}\label{fundinter}
	\fund{X}\cap \shcochain= \sum_{\smallsimplex \in L_l}\sum_{\bigsimplex\geq \smallsimplex} \shcochain_{ \bigsimplex}^\smallsimplex \fund{X}\cap (\smallsimplex \under \bigsimplex).	
\end{equation}
Using the expression (Equation~\eqref{fundamentclass}) for the fundamental class, we see that we want to compute (using Equation~\eqref{cupversion1}) for $\bigsimplexone\in X_n$
\begin{align*}
	\bigsimplexone\under \bigsimplexone^*\cap \smallsimplex\under \bigsimplex	&=\delta_{\bigsimplexone_{\geq n- l}, \smallsimplex} \langle h^*(\bigsimplexone_{\geq n-l}<\bigsimplexone)(\bigsimplexone \under \bigsimplexone^*),  \bigsimplexone_{\geq n-l}\under \bigsimplex \rangle \bigsimplexone_{\leq n-l}\\
	&= \delta_{\bigsimplexone_{\geq n-l}, \smallsimplex} \langle \bigsimplexone_{\geq n-l} \under \bigsimplexone^*, \bigsimplexone_{\geq n-l}\under \bigsimplex\rangle\bigsimplexone_{\leq n-l}\\
	&= \delta_{\bigsimplexone_{\geq n-l}, \smallsimplex}\delta_{\bigsimplexone, \bigsimplex}\bigsimplexone_{\leq n-l}.
\end{align*}
Plugging this into Equation~\eqref{fundinter} now gives the first equation. The second is proved similarly. 
\end{proof}

\subsubsection{Capping with the fundamental class versus $\dcap$ and $\dcap^\vee$}\label{fundclassvsdualsect} We now compare capping with the fundamental class with the map $\dcap$ from Theorem~\ref{standardexactrowspecseq}. When $X$ is $n$-dimensional we have the maps $h_n(\smallsimplex)\hookrightarrow C_n(X,X-\ostar\smallsimplex)$ including the homology as the kernel of the differential. This gives a map (see Figure~\ref{mvdoublecomplexfigure} for a visual aid)
$$
C_c^l(L; h_n|_L) \xrightarrow{\iota} D^l_n(L)=\bigoplus_{\smallsimplex \in X_l} C_n(X, X-\ostar \smallsimplex).
$$
Similarly, we want to compare $\dcap^\vee$ (and its compactly supported version) from Theorem~\ref{othercoefctheorem} with $\fund{X}\cap -$. For $n$-dimensional $X$ we have a map induced from the quotient maps $C^n(X,X-\ostar\smallsimplex )\rar h^n(\smallsimplex)$ to give
$$
B^l_n(L\vc) \xrightarrow{q}  C^\lf_{n-l}(L\vc;h^n),
$$
that we will use for the comparison.

\begin{proposition}\label{capvsc}
	Let $X$ be an $n$-dimensional locally finite oriented simplicial complex with fundamental class $[\fund{X}]\in H^\lf_n(X;h^n)$ and subcomplex $L$, oriented so that $L\vc$ is before $L$. Then the composite
	$$
	C_c^l(L; h_n|_L) \xrightarrow{\iota} D^l_n(L) \xrightarrow{\dcap} C_{n-l}(X, L\vc)
	$$
	is equal to the cap product $\fund{X}\cap -$. The corresponding statement for non-compactly supported cohomology and locally finite homology also holds.
	Similarly 
	$$
	C_c^l(X,L; R) \xrightarrow{\dcap^\vee} B^l_n(L\vc) \xrightarrow{q}  C_{n-l}(L\vc;h^n)
	$$
	agrees with $\fund{X}\cap-$ and the analogous statement for non-compactly supported cohomology and locally finite homology holds.
\end{proposition}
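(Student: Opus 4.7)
The plan is to prove both equalities by direct chain-level comparison. The fundamental class $\fund{X} = \sum_{\bigsimplex \in X_n} \bigsimplex \under \bigsimplex^*$ is concentrated in simplicial dimension $n$, so capping with it should interact only with the $n$th row of the (dual) Mayer--Vietoris double complex. The map $\iota$ is exactly the inclusion of $C_c^l(L;h_n|_L)$ into this $n$th row, via the summand-wise inclusions $h_n(\smallsimplex) \hookrightarrow C_n(X, X-\ostar\smallsimplex)$; and $q$ is the summand-wise projection from $C^n$-coefficients onto $h^n$-coefficients. The overall strategy is therefore to write the explicit chain-level formulas for $\dcap$ and $\dcap^\vee$, restrict them to the $n$th row, and match these term-by-term with the corresponding formulas in Lemma~\ref{capwithfund}.

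For the first composite I would take $\shcochain \in C_c^l(L; h_n|_L)$, written as $\shcochain = \sum \shcochain^\smallsimplex_\bigsimplex \, \smallsimplex \under \bigsimplex$ with $\smallsimplex \in L_l$ and $\bigsimplex \in X_n$ containing $\smallsimplex$ a cycle-representative in $h_n(\smallsimplex)$. Since $\iota$ tautologically regards this as a chain in $D^l_n(L)$, substituting $k = n$ into the chain formula $\dcap(\shcochain) = \sum_{\bigsimplex \in X_k} \shcochain_\bigsimplex^{\bigsimplex_{\geq k-l}} \, \bigsimplex_{\leq k-l}$ from Theorem~\ref{standardexactrowspecseq} produces exactly the first formula of Lemma~\ref{capwithfund}, so $\dcap \circ \iota = \fund{X} \cap -$ on the nose at the chain level.

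For the second composite I would take a generator $\intcochain = \bigsimplexone^* \in C_c^l(X, L; R)$ with $\bigsimplexone \notin L$, and use the explicit formula for $\dcap^\vee$ from Theorem~\ref{othercoefctheorem}, which by Lemma~\ref{cduallemma} is the $R$-dual of $\dcap$. The component of $\dcap^\vee(\bigsimplexone^*)$ in the $n$th row $B^l_n(L\vc)$ is a sum over $n$-simplices $\bigsimplex$ of $X$ subject to face-matching conditions involving $\bigsimplexone$; applying $q$ then passes each $\bigsimplex^*$ to its class in $h^n(\bigsimplex_{\leq n-l})$, and the resulting expression should match the second formula of Lemma~\ref{capwithfund}. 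The non-compactly-supported and locally finite statements follow by replacing direct sums with direct products in the chain modules (cf.\ Corollary~\ref{locallyfinitecor}), since every identity in the argument is verified chain-by-chain.

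The main piece of bookkeeping will be verifying that the indexing conditions (which simplex ranges over which subcomplex, and which face-matching conditions appear) on the two sides of each equality genuinely coincide. Lemma~\ref{basicorderingconsequences} is the key tool here: it translates between conditions such as $\bigsimplex_{\geq n-l} \notin L$ and $\bigsimplex_{\leq n-l} \in L\vc$, which is precisely what is needed to confirm that $\fund{X} \cap -$ lands in the advertised target modules $C_{n-l}(X,L\vc)$ and $C_{n-l}(L\vc; h^n)$. I do not anticipate a substantive obstacle --- this proposition is essentially a matter of identifying two chain-level expressions, with all the real work already encapsulated in Theorems~\ref{standardexactrowspecseq} and~\ref{othercoefctheorem} and Lemma~\ref{capwithfund}.
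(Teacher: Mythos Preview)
Your treatment of the first composite is correct and matches the paper exactly: one simply compares the formula for $\dcap$ from Theorem~\ref{standardexactrowspecseq} at $k=n$ with the first formula in Lemma~\ref{capwithfund}.

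There is, however, a genuine gap in your plan for the second composite. Theorem~\ref{othercoefctheorem} gives a formula for $\dcap^\vee\colon C^q(X,L\vc)\to \mathbf{Tot}B(L)$, under the standing assumption that $L\vc$ is ordered before $L$. In the second composite you need $\dcap^\vee\colon C^l_c(X,L)\to B(L\vc)$, i.e.\ you are applying the theorem with $L\vc$ in the role of $L$. For that application the required ordering hypothesis is that $(L\vc)\vc=L$ be before $L\vc$, which is the \emph{opposite} of the orientation actually in force. If you plug into Equation~\eqref{cveedef} as written, you get terms of the form $\bigsimplexone^*(\bigsimplex_{\leq l})\,\bigsimplex_{\geq l}\under\bigsimplex^*$, whereas Lemma~\ref{capwithfund} gives $\bigsimplexone^*(\bigsimplex_{\geq n-l})\,\bigsimplex_{\leq n-l}\under\bigsimplex^*$: the front and back faces are swapped, and the two expressions do not match.

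The fix, which the paper makes explicit, is to observe that applying Theorem~\ref{othercoefctheorem} with the reversed orientation interchanges front and back faces in the formula for $\dcap^\vee$; after this swap the $n$th-row component becomes $\sum_{\bigsimplex}\bigsimplexone^*(\bigsimplex_{\geq n-l})\,\bigsimplex_{\leq n-l}\under\bigsimplex^*$, and composing with $q$ then matches Lemma~\ref{capwithfund} on the nose. Your invocation of Lemma~\ref{basicorderingconsequences} would not resolve this, since the issue is not about which subcomplex a simplex lies in but about which face (front or back) appears in the formula.
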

\begin{proof}
	The first statement follows immediately by comparing the formula for $\fund{X}\cap -$ from Lemma~\ref{capwithfund} with the expression for $\dcap$ from Theorem~\ref{standardexactrowspecseq}. 
	
	For the second we need to take a little care. Theorem~\ref{othercoefctheorem} provides a map out of $C^q(X, L^\textnormal{vc})$, so we would like to apply this theorem with $L\vc$ playing the role of $L$. However, we have oriented $L\vc$ before $L$. This means that to apply Theorem~\ref{othercoefctheorem} we should reverse the orientation of $X$. The map $\dcap^\vee$ (see Equation~\eqref{cveedef}) then becomes the map that takes $\bigsimplexone^*\in C^l_c(X,L)$ to a chain in $B(L\vc)$ with components in $B^l_n(L\vc)$ given by
	$$
	\sum_{\bigsimplex\in L_n} \bigsimplexone^*(\bigsimplex_{\geq n-l}) \bigsimplex_{\leq n-l}\under \bigsimplex^*.
	$$
	Here we used that reversing the orientation of $X$ exchanges front faces and back faces. Composing this with the map $B^l_n(L\vc) \rar  C^\lf_{n-l}(L\vc;h^n)$ indeed agrees with the expression found in Lemma~\ref{capwithfund}.
\end{proof}

\subsubsection{The CM Duality Theorem}\label{cmdualitysection}
We have now gathered all we need to prove the CM Duality Theorem. There will be eight different duality isomorphisms in this theorem. These are subdivided into four that map out of cohomology with coefficients in $h_*$, and four that map out of $R$-cohomology. Within these groups there is a further subdivision into pairs based on whether one needs the local CM condition to hold on all of the simplicial complex or only at a subcomplex. In each of the resulting four pairs there is both a variant where the domain is the compactly supported cohomology or cohomology without restriction on support.

\begin{theorem}[CM Duality]\label{dualitytheorem}
	Let $X$ be a locally finite complex of dimension $n$ with local homology sheaf $h_n$ and local cohomology sheaf $h^n$. Let $L$ be a full subcomplex of $X$ and pick an oriented of $X$ so that $L\vc$ is before $L$.
	Then the following maps
	\begin{enumerate}[label=\arabic*., ref=\arabic*]
		\item\label{covariantgroup}\begin{enumerate}[label=\alph*., ref=\arabic{enumi}.\alph*]
			\item when $X$ is locally CM at $L$ \label{poincarecptcosh} \begin{enumerate}[label=\roman*., ref=\arabic{enumi}.\alph{enumii}.\roman*]
					\item \label{poincare1ai} $H^l_c(L; h_n|_L) \xrightarrow{[\fund{X}]\cap -} H_{n-l}(X,L^\textnormal{vc};R)$,
					\item \label{poincare1aii}  $H^l(L; h_n|_L) \xrightarrow{[\fund{X}]\cap -} H_{n-l}^{\textnormal{lf}}(X,L^\textnormal{vc};R)$
				\end{enumerate}
				\item\label{poincarelocfincosh} when $X$ is locally CM
				\begin{enumerate}[label=\roman*., ref=\arabic{enumi}.\alph{enumii}.\roman*]
					\item \label{poincare2ai} $H^l_c(X,L;h_n) \xrightarrow{[\fund{X}]\cap -} H_{n-l}(L\vc;R)$
					\item \label{poincare2aii}  $H^l(X,L;h_n) \xrightarrow{[\fund{X}]\cap -} H^\textnormal{lf}_{n-l}(L\vc;R)$
			\end{enumerate}			
			\end{enumerate}
		\item \label{contravariantgroup} \begin{enumerate}[label=\alph*., ref=\arabic{enumi}.\alph*]
			\item when $X$ is locally CM at $L\vc$\label{poincarecptint} \begin{enumerate}[label=\roman*., ref=\arabic{enumi}.\alph{enumii}.\roman*]
				\item \label{poincare2bi}  $H^l_c(X,L;R) \xrightarrow{[\fund{X}]\cap -} H_{n-l}(L\vc;h^n|_{L\vc})$
					\item \label{poincare2bii}  $H^l(X,L;R) \xrightarrow{[\fund{X}]\cap -} H^\textnormal{lf}_{n-l}(L\vc;h^n|_{L\vc})$
			\end{enumerate}
		\item when $X$ is locally CM \label{pioncarelocfinint}\begin{enumerate}[label=\roman*., ref=\arabic{enumi}.\alph{enumii}.\roman*]
					\item \label{poincare1bi}  $H^l_c(L; R) \xrightarrow{[\fund{X}]\cap -} H_{n-l}(X,L^\textnormal{vc};h^n)$
			\item \label{poincare1bii} $H^l(L; R) \xrightarrow{[\fund{X}]\cap -} H^\textnormal{lf}_{n-l}(X,L^\textnormal{vc};h^n)$
		\end{enumerate}
		\end{enumerate}
	\end{enumerate}
	induced by taking the cap product with the fundamental class are all isomorphisms of $R$-bimodules.
	
	If $X$ is locally CM at $L$ over a cosheaf $\mathcal{G}$ the maps from \ref{covariantgroup} are isomorphisms with $h_n$ replaced by $(h_\G)_n$ and $R$ by $\G$. Similarly, if $X$ is locally CM at $L$ over a sheaf $\mathcal{F}$ the maps from \ref{contravariantgroup} are isomorphisms with $h^n$ replaced by $(h^\F)^n$ and $R$ by $\F$. 
\end{theorem}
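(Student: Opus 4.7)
The plan is to deduce cases \ref{poincare1ai}, \ref{poincare1aii}, \ref{poincare2ai}, \ref{poincare2aii} from the row and column spectral sequences of four Mayer--Vietoris double complexes ($D(L)$, $D^\sqcap(L)$, $B_c(L\vc)$, $B(L\vc)$), and then derive cases \ref{poincare1bi}--\ref{poincare2bii} from these via the five-lemma. In each of the first four cases the column spectral sequence collapses on the $E_2$-page under the relevant local CM hypothesis (Proposition~\ref{CMspecseqcollapse}), the row spectral sequence is realised at chain level by $\mathfrak{c}$ (Theorem~\ref{standardexactrowspecseq}), $\mathfrak{c}^\sqcap$ (Corollary~\ref{locallyfinitecor}), or $\mathfrak{c}^\vee$ (Theorem~\ref{othercoefctheorem}), and Proposition~\ref{capvsc} identifies the composed abstract isomorphism with capping by $[\fund{X}]$.

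I would write out case \ref{poincare1ai} in detail. Theorem~\ref{standardexactrowspecseq} gives $H_{n-l}(X, L\vc;R) \cong H_{n-l}\mathbf{Tot}D(L)$ via $\mathfrak{c}$. When $X$ is locally CM at $L$, the column spectral sequence of $D(L)$ collapses to yield $H_{n-l}\mathbf{Tot}D(L) \cong H^l_c(L; h_n|_L)$, with the $E^\infty$-page edge map given by the inclusion $\iota\colon C^l_c(L;h_n|_L) \hookrightarrow D^l_n(L)$; by Proposition~\ref{capvsc}, $\mathfrak{c}\circ\iota = [\fund{X}]\cap -$, so the composed isomorphism is the cap product. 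Case \ref{poincare1aii} is identical after replacing $D(L)$ by $D^\sqcap(L)$ and invoking Corollary~\ref{locallyfinitecor}. For cases \ref{poincare2ai} and \ref{poincare2aii}, Theorem~\ref{othercoefctheorem} supplies $H^*_{(c)}\mathbf{Tot}B_{(c)}(L\vc)\cong H^*_{(c)}(X, L)$; the column spectral sequence collapses when $X$ is locally CM at $L\vc$, giving $H_l(L\vc; h^n|_{L\vc})$ or $H^\lf_l(L\vc; h^n|_{L\vc})$ on the $E^\infty$-page; and the second half of Proposition~\ref{capvsc} again realises this as $[\fund{X}]\cap -$.

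The remaining cases \ref{poincare1bi}--\ref{poincare2bii}, in which $X$ is assumed locally CM everywhere, then follow by the five-lemma. For example, case \ref{poincare1bi} comes from the commutative ladder of long exact sequences associated to the short exact sequences $0\to C^l_c(X,L;h_n)\to C^l_c(X;h_n)\to C^l_c(L;h_n|_L)\to 0$ and $0\to C_*(L\vc)\to C_*(X)\to C_*(X,L\vc)\to 0$, linked vertically by the caps of Proposition~\ref{usefulrelativecups}; since those relative caps are by construction induced by restriction or quotient from the absolute cap, the ladder commutes on the nose. The middle vertical map is the absolute CM duality, obtained by taking $L = X$ in case \ref{poincare1ai}, and the right vertical map is case \ref{poincare1ai} itself; both are isomorphisms, so the five-lemma gives case \ref{poincare1bi}. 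Cases \ref{poincare1bii}, \ref{poincare2bi}, and \ref{poincare2bii} follow analogously from cases \ref{poincare1aii}, \ref{poincare2ai}, \ref{poincare2aii} via the corresponding ladders of short exact sequences.

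For the coefficient versions, all the constructions above are chain-level and additive, so they survive application of $\mathcal{G}$-transport (Definition~\ref{d:gtransport}) and $\mathcal{F}$-dual, as already sketched in \textsection\ref{s:mvwithcoefs} and at the end of \textsection\ref{dualsection}. The local CM hypothesis over $\mathcal{G}$ (respectively $\mathcal{F}$) is precisely what makes the column spectral sequence of the transported double complex collapse to $(h_\mathcal{G})_n$ (respectively $(h^\mathcal{F})^n$) on the $E^\infty$-page. The main obstacle I anticipate is the bookkeeping around orientations and signs: specifically, verifying for each of the eight cases that the edge map of the column spectral sequence is $\iota$ (or its $\mathcal{F}$-dual) with the sign conventions matching those of Equation~\eqref{totaldifferentialD}, so that Proposition~\ref{capvsc} yields $[\fund{X}]\cap -$ on the nose rather than up to a sign or degree shift.
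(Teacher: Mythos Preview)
Your approach is the same as the paper's: four cases are established directly via the collapsing column spectral sequences of $D(L)$, $D^\sqcap(L)$, $B_c(L^{\mathrm{vc}})$, $B(L^{\mathrm{vc}})$ together with Proposition~\ref{capvsc}, and the remaining four follow from the five-lemma applied to the long exact sequences coming from Equation~\eqref{usefulses}. The paper presents exactly your five-lemma ladder (with $h_n$ on the cochain side and $R$ on the chain side) as its worked example.

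However, you have systematically swapped the labels. The cases you handle via $B_{(c)}(L^{\mathrm{vc}})$---source $H^*_{(c)}(X,L;R)$, target involving $h^n|_{L^{\mathrm{vc}}}$, hypothesis ``locally CM at $L^{\mathrm{vc}}$''---are \ref{poincare2bi} and \ref{poincare2bii}, not \ref{poincare2ai} and \ref{poincare2aii}. Likewise, your five-lemma example actually proves \ref{poincare2ai}, not \ref{poincare1bi}. (The label \texttt{poincare2ai} refers to item 1.b.i, and \texttt{poincare1bi} to item 2.b.i; the naming is admittedly confusing.) Once the references are untangled the argument is correct and coincides with the paper's.
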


\begin{proof}
	\ref{poincare1ai}: Recall the MV double complex $D^\bullet_\bullet(L)$ from \textsection~\ref{mvdoublecompsect}. Because $X$ is CM at $L$ the column spectral sequence of this double complex collapses (Proposition~\ref{CMspecseqcollapse}) to give an isomorphism
	$$
	H^*_c(L;h_n|_L) \cong H_* \mathbf{Tot}D,
	$$
	induced by the inclusion of $C^*(L,h_n)$ as the $n$th row of the $E^1$-page. Combining this with Theorem~\ref{standardexactrowspecseq} gives an isomorphism
	$$
	H^l(L; h_n|_L) \xrightarrow{\cong} H_{n-l}(X,L^\textnormal{vc};R).
	$$
	By Proposition~\ref{capvsc} this isomorphism agrees with $[\fund{X}]\cap -$, establishing that the map in~\ref{poincare1ai} is an isomorphism.
	
	\ref{poincare1aii}: Observe that when $X$ is locally CM at $L$ we have
	$$
	H_*(\mathbf{Tot} D^\sqcap) \cong H^*(L; h^n|_L),
	$$
	by a similar spectral sequence argument as for $D$. Applying Corollary~\ref{locallyfinitecor} and Proposition~\ref{capvsc} then yields the result.
	
	\ref{poincare2bi} and~\ref{poincare2bii}: We prove the map from~\ref{poincare2bi} is an isomorphism, the argument for the map from~\ref{poincare2bii} is analogous. For $X$ locally CM at $L$, the double complex $B_c$'s column spectral sequence collapses to give
	$$
	H^l(\mathbf{Tot}B_c(L\vc)) \cong H_{n-l}(L\vc,h^n).
	$$
	Using Proposition~\ref{capvsc} now yields the result.
	
	The fact that the rest of the maps are isomorphisms now follows by using the long exact sequences for the short exact sequences of chain complexes from Equation~\eqref{usefulses} (characterising the relative chain complex as a cokernel and the relative cochain complex as a kernel) used to set up the relative caps and the five-lemma. We present the argument here for \ref{poincare2ai}, the rest is similar. The short exact sequence for cochains restricts to a short exact sequence for cochains with compact support. This together with the short exact sequence for chains gives
	\begin{center}
		\begin{tikzcd}
			0 \arrow[r] 	&C^l_c(X,L;h_n)\arrow[r]\arrow[d,"\fund{X}\cap -"]	&C^l_c(X;h_n) \arrow[r]\arrow[d,"\fund{X}\cap -"]	&C^{l-1}_c(L;h_n|_L) \arrow[r]\arrow[d,"\fund{X}\cap -"]	& 0 \\
			0 \arrow[r] 	&C_{n-l}(L\vc)\arrow[r]								&C_{n-l}(X) \arrow[r]								&C_{n-l+1}(X,L\vc) \arrow[r]				&0.
		\end{tikzcd}
	\end{center}
	Note that the proof of Proposition~\ref{usefulrelativecups} implies that this is a commutative diagram. Additionally, as $\fund{X}$ is closed $\fund{X}\cap -$ is a chain map, so we get the commutative diagram
	\begin{center}
		\begin{tikzcd}
			\dots \arrow[r] 	&H^{l-1}_c(L;h_n|_L) \arrow[r]\arrow[d,"\fund{X}\cap -","\cong"']	&H^l_c(X,L;h_n)\arrow[r]\arrow[d,"\fund{X}\cap -","\cong"']	&H^l_c(X;h_n) \arrow[r]\arrow[d,"\fund{X}\cap -","\cong"']	& \dots \\
			\dots \arrow[r] 	&H_{n-l+1}(X,L\vc) \arrow[r]								&H_{n-l}(L\vc)\arrow[r]								&H_{n-l}(X) \arrow[r]								&\dots
		\end{tikzcd}
	\end{center}
	for the long exact sequences. As $X$ is locally CM, it is also locally CM at $L$, so both the maps on the sides are isomorphisms by \ref{poincare1ai}. Applying the five-lemma then shows that the map in the middle is too.
	
	If $X$ is CM of dimension $n$ at $L$ over $\mathcal{G}$, then the column spectral sequence of $D(L;\mathcal{G})$ (see \textsection\ref{s:mvwithcoefs}) collapses to give
	$$
	H^*\mathbf{Tot}D(L;\mathcal{G}) \cong H^*_c(L;h^\mathcal{G}_*|_L).
	$$
	The $\G$-transport of Proposition~\ref{capvsc} gives that the map $C^n_c(L; h^\mathcal{G}_*|_L) \rar D^l_n(L;\mathcal{G})$ composed with $\dcap_\mathcal{G}$ agrees with $\fund{X}\cap_\G-$, where $\cap_\G$ is the $\G$-transport of $\cap$ introduced in \textsection\ref{s:capwithcoefs}. This allows us to adapt the arguments above to see that the maps from \ref{covariantgroup} are isomorphisms with $h_n$ replaced by $(h_\G)_n$ and $R$ by $\G$. Using the $\F$-dual functor, one similarly shows that if $X$ is locally CM at $L$ over a sheaf $\mathcal{F}$ the maps from \ref{contravariantgroup} are isomorphisms with $h^n$ replaced by $(h^\F)^n$ and $R$ by $\F$. 
\end{proof}

\begin{remark}
	For a triangulated manifold without boundary, the local homology sheaf over $\Z$ is a locally constant sheaf with stalks $\Z$. An orientation is the same as an isomorphism of sheaves of graded abelian groups between $h_*$ and the constant sheaf $\Z$ in degree $n$. This is equivalently (by the Universal Coefficient Theorem) an isomorphism of cosheaves of graded abelian groups between the constant cosheaf $\Z$ in degree $n$ and $h^*$. With these observations in mind one recovers the usual Poincar\'e duality for both oriented and unorientable manifolds from the Duality Theorem above. Note that for a non-orientable $n$-manifold $M$ there are two distinct versions:
	$$
	H^k(M;h_*) \cong H_{n-k}(M;\Z)
	$$
	as well as
	$$
	H^k(M;\Z) \cong H_{n-k}(M;h^*).
	$$
	When $M$ is orientable and has boundary $\partial M$, the local homology is trivial on the boundary and one sees that
	$$
	H^*(M;h_*) \cong H^*(M,\partial M;\Z),
	$$
	so the Duality Theorem gives Lefschetz duality.
\end{remark}

\section{Functoriality and Naturality}\label{functorialitysection}
This section is devoted to explaining in what sense the duality isomorphisms are natural, giving the relevant induced maps for the cohomology of the local homology sheaf and the homology of the local cohomology cosheaf in Section~\ref{sheafmaps}. This allows us to establish naturality of CM duality in Section~\ref{natdualsection}.

\subsection{Functoriality}\label{sheafmaps}
We introduce the class functions for which we have functoriality in \textsection\ref{simplcomplmapssect}, and give their induced maps on (co)homology in \textsection\ref{inducedmapsect}.

\subsubsection{Star-local homeomorphisms}\label{simplcomplmapssect}
From simplicial (co)homology we have:

\begin{definition}
	Let $f\colon X \rar Y$ be a simplicial map between oriented simplicial complexes and assume that $f\smallsimplex$ has the same dimension as $\smallsimplex$. The \emph{orientation index $\ind{f}{\smallsimplex}$ of $f\smallsimplex$} is the sign of the permutation taking the vertices of $f\smallsimplex$ ordered according to $X$ to the vertices ordered using the ordering of $Y$. When $\ind{f}{\smallsimplex}=1$ for all $\smallsimplex\in X$, we say that $f$ is \emph{orientation preserving}.
\end{definition}

The orientation index satisfies $(-1)^i \ind{f}{\smallsimplex_{\langle j \rangle}}= (-1)^j \ind{f}{\smallsimplex}$, using this one proves:

\begin{lemma}\label{orientationindexfixesallourproblems}
		A map between oriented complexes $f: X\rar Y$ induces a chain map
	\begin{align*}
		f_*\colon C_l (X)& \rar C_l (Y)\\
		\smallsimplex	&\mapsto \ind{f}{\smallsimplex} f\smallsimplex,
	\end{align*}
	between the simplicial chain complexes of oriented simplices of $X$ and $Y$. 
\end{lemma}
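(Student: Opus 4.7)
The plan is a direct verification that $f_*$ commutes with the simplicial boundary operator $\partial$. First I would unpack the hypothesis: if $f\smallsimplex$ has the same dimension as $\smallsimplex$, then $f$ restricts to a bijection on the vertex set of $\smallsimplex$, and so remains dimension-preserving on every face $\smallsimplex_{\langle j\rangle}$; in particular every orientation index $\ind{f}{\smallsimplex_{\langle j\rangle}}$ appearing below is defined. I would also record the bookkeeping fact that for each $j\in\{0,\dots,l\}$ the image simplex $f\smallsimplex_{\langle j\rangle}$ equals $(f\smallsimplex)_{\langle i\rangle}$, where $i=i(j)$ is the position of $f(\smallsimplex_j)$ in the $Y$-ordering of $f\smallsimplex$. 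By construction $j\mapsto i(j)$ is a bijection of $\{0,\dots,l\}$, and it is precisely this $i$ featuring in the identity $(-1)^i \ind{f}{\smallsimplex_{\langle j\rangle}} = (-1)^j \ind{f}{\smallsimplex}$ that is quoted immediately before the lemma.

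Next I would evaluate both sides on a generator $\smallsimplex\in X_l$:
\[
f_*(\partial\smallsimplex)=\sum_{j=0}^{l}(-1)^j\,\ind{f}{\smallsimplex_{\langle j\rangle}}\,f\smallsimplex_{\langle j\rangle}, \qquad \partial(f_*\smallsimplex)=\ind{f}{\smallsimplex}\sum_{i=0}^{l}(-1)^i\,(f\smallsimplex)_{\langle i\rangle}.
\]
Rearranging the quoted identity gives $(-1)^j\ind{f}{\smallsimplex_{\langle j\rangle}}=(-1)^{i(j)}\ind{f}{\smallsimplex}$, so each summand on the left coincides with the summand on the right indexed by $i(j)$. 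Since $j\mapsto i(j)$ is a bijection, the two sums are equal and $\partial f_*=f_*\partial$, which is the claim.

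There is essentially no obstacle here: once the sign identity is in hand, the lemma is a one-line calculation. The only point that deserves a moment of care is checking that the orientation indices on faces make sense in the first place; this is why I would begin the proof with the dimension-preservation observation.
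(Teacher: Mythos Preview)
Your proposal is correct and is exactly the argument the paper intends: the paper does not give a written proof but states the sign identity $(-1)^i\,\ind{f}{\smallsimplex_{\langle j\rangle}}=(-1)^j\,\ind{f}{\smallsimplex}$ and says ``using this one proves'' the lemma, which is precisely the computation you carry out. Your extra remark that dimension-preservation on $\smallsimplex$ forces it on all faces (so the orientation indices are defined) is a helpful clarification the paper leaves implicit.
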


For the local (co)homology (co)sheaf there is only a limited class of maps for which we have a good functoriality result, namely:

\begin{definition}\label{starlocalhomeodef}
	A \emph{star-local homeomorphism} $f\colon X \rar Y$ between simplicial complexes is a simplicial map such that for every $\smallsimplex \in Y$ and $\smallsimplexone\in f^{-1}\smallsimplex$ the map
	$$
	f|_{\st\smallsimplexone}\colon \st\smallsimplexone \rar \st \smallsimplex
	$$
	is a simplicial isomorphism. Such an $f$ between oriented complexes is \emph{orientation preserving} if additionally each $f|_{\st \smallsimplexone}$ preserves the orientation of $\st \smallsimplexone$. 
\end{definition}
One checks that if $f$ is star-local and $\smallsimplexone, \smallsimplextwo \in f^{-1}\smallsimplex$ are distinct, then $\st \smallsimplexone \cap \st \smallsimplextwo=\emptyset$.

\begin{remark}
One can show that any simplicial map that induces a local homeomorphism on geometric realisations is a star-local homeomorphism. 
\end{remark}

\subsubsection{The induced maps}\label{inducedmapsect}
We have the following maps induced by star-local homeomorphisms, we leave checking that these formulas define chain maps to the reader.

\begin{definition}\label{covariancelemma}
	Let $f\colon X\rar Y$ be a star-local homeomorphism. Then we have chain maps defined by
	\begin{alignat*}{4}
		f_! \colon	& C_c^*(X; h^X_*)&\rar &C^*(Y, h^Y_*)\\
		& \smallsimplex \under \bigsimplex &\mapsto &\ind{f}{\smallsimplex}\ind{f}{\bigsimplex}f\smallsimplex \under f\bigsimplex,
	\end{alignat*}
	and by
	\begin{alignat*}{4}
		f^! \colon	& C^\lf_*(Y; h^*_Y) &\rar& C^\lf_*(X;h_X^*)\\
		& \smallsimplex\under \bigsimplex^* &\mapsto &\sum_{\smallsimplexone \in f^{-1}\smallsimplex}\ind{f}{\smallsimplexone} \ind{f}{f|_{\st \smallsimplexone}^{-1}\bigsimplex}\smallsimplexone \under (f|_{\st \smallsimplexone}^{-1} \bigsimplex)^*.
	\end{alignat*}
	If $f$ has finite fibres, then $f$ also induces maps $f_! \colon C_c^*(X; h^X_*)\rar C_c^*(Y, h^Y_*)$ and  $f_! \colon C_c^*(X; h^X_*)\rar C_c^*(Y, h^Y_*)$, as well as $f^! \colon	 C_*(Y; h^*_Y) \rar C_*(X;h_X^*)$.
\end{definition}

\subsection{Naturality of duality}\label{natdualsection}
We now establish naturality of the duality isomorphisms. We first show that the cap product is natural with respect to star local homeomorphisms in \textsection\ref{natofcapsect}. In \textsection\ref{functfundclass} we show that the fundamental class is preserved under these maps, which allows us to conclude naturality for the duality isomorphisms in \textsection\ref{natdualizosect}.

\subsubsection{Naturality of the cap product}\label{natofcapsect}
The two versions of the cap product both satisfy a kind of naturality using the maps $f_!$ and $f^!$ from Corollary~\ref{covariancelemma}:
\begin{lemma}\label{naturalitylemma}
	Let $f\colon X\rar Y$ be an orientation preserving star-local homeomorphism of simplicial complexes. If $\shchain \in C^\lf_k(Y;h^*_Y)$ and $\shcochain \in C_c^l(X;h_*^X)$, then the cap product from Equation~\eqref{cupversion1} satisfies
	$$
	f_*(f^!\shchain \cap \shcochain) = \shchain \cap f_!\shcochain.
	$$
	The cap product from Equation~\eqref{cupversion2} satisfies, for $\shchain \in C^\lf_k(Y;h^*_Y)$ and $\intcochain \in C^l(Y)$,
	$$
	f^!\shchain\cap f^*\intcochain = f^!(\shchain \cap \intcochain).
	$$
	If $f$ is not orientation preserving, the above holds after passing to homology.
\end{lemma}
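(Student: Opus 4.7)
The plan is to verify both identities directly on generators in the orientation preserving case, then reduce the general case by changing the orientation of $X$ to make $f$ orientation preserving.

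Assuming $f$ is orientation preserving, all indices $\ind{f}{\cdot}$ equal $1$ and each $f|_{\st \tilde{\smallsimplex}}$ is an order-preserving simplicial isomorphism onto its image, so $f$ commutes with taking front and back faces and preserves face relations. For the first identity, I would evaluate on generators $\shchain = \smallsimplex \under \bigsimplex^*$ and $\shcochain = \tilde{\smallsimplex}' \under \tilde{\bigsimplex}$ using Equation~\eqref{cap1ongens} and Definition~\ref{covariancelemma}. The sum defining $(f^!\shchain) \cap \shcochain$ admits at most one non-zero summand: the unique lift $\tilde{\smallsimplex} \in f^{-1}\smallsimplex$ lying in $\st \tilde{\smallsimplex}'$ with $\tilde{\smallsimplex}_{\geq k-l} = \tilde{\smallsimplex}'$, existing precisely when $f\tilde{\smallsimplex}' = \smallsimplex_{\geq k-l}$ and $f\tilde{\bigsimplex} = \bigsimplex$. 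Applying $f_*$ to the resulting $\tilde{\smallsimplex}_{\leq k-l}$ gives $\smallsimplex_{\leq k-l}$, matching $\shchain \cap f_!\shcochain$ directly from Equation~\eqref{cap1ongens}. The second identity is verified similarly, using the bijection $\tilde{\smallsimplex} \leftrightarrow \tilde{\smallsimplex}_{\leq k-l}$ between $f^{-1}\smallsimplex$ and $f^{-1}\smallsimplex_{\leq k-l}$ afforded by unique lifting of $\smallsimplex \in \st \smallsimplex_{\leq k-l}$ to each star above $\smallsimplex_{\leq k-l}$; the independence of $f|_{\st\tilde{\smallsimplex}}^{-1}\bigsimplex$ from which star $\tilde{\smallsimplex}$ lies in (which follows because the stars above a face are contained in the stars above the whole simplex) guarantees the sheaf-valued summands match.

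For the general case, I would pull back the orientation of $Y$ along $f$ simplex by simplex to obtain an orientation $\calo'$ of $X$ with respect to which $f$ is orientation preserving; this is well-defined because $f$ is injective on each simplex. By Lemma~\ref{homorderinv} (and its analogue for sheaves and cosheaves) the change of orientation induces chain isomorphisms on $C_\bullet(X)$, $C^\bullet(X;h_\bullet)$, and $C^\lf_\bullet(X;h^\bullet)$ whose local signs are precisely the orientation indices $\ind{f}{\cdot}$. These signs can be absorbed into the definitions of $f^!$ and $f_!$, exhibiting them as compositions of the corresponding orientation preserving maps for $\calo'$ with the change-of-orientation isomorphism. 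Theorem~\ref{orderderindependencetheorem} ensures the cap product is compatible with change of orientation in (co)homology, so the chain-level identities from the orientation preserving case translate into the stated (co)homological identities for the original orientation.

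The main obstacle will be the sign bookkeeping in the reduction step, in particular matching the orientation indices appearing in $f^!$ and $f_!$ with the signs from the change-of-orientation isomorphism so that both sides of each identity are recognised as images of the chain-level identities for $(X, \calo')$ under the orientation-independence isomorphisms. A conceptually clean way to handle this is to record, for each simplex $\smallsimplex$, that the two possible definitions of $f^!$ (using $\calo_X$ versus $\calo'$) differ by exactly the sign $\ind{f}{\smallsimplex}$ appearing in Lemma~\ref{homorderinv}, so the diagrams commute on the nose at the chain level and the conclusion descends to (co)homology via Theorem~\ref{orderderindependencetheorem}.
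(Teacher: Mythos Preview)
Your proposal is correct and follows essentially the same route as the paper: verify both identities on generators under the orientation-preserving hypothesis (using that at most one lift contributes, via the star-local isomorphism), then deduce the general case from orientation independence of the cap product. The paper handles the non-orientation-preserving reduction in a single sentence invoking Theorem~\ref{orderderindependencetheorem}, whereas you spell out the mechanism of pulling back the orientation and matching the signs in $f^!$, $f_!$ with those of Lemma~\ref{homorderinv}; this extra detail is sound and arguably clarifies what the paper leaves implicit.
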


\begin{proof}
	The statement for when $f$ is not orientation preserving follows from Theorem~\ref{orderderindependencetheorem} and the rest of the content of this lemma. So assume that $f$ is orientation preserving. It suffices to prove the above statements on generators at the chain level. For the first equation let $\shchain= \smallsimplex\under \bigsimplex^*$ and $\shcochain=\smallsimplexone \under \bigsimplexone$. Then
	\begin{align*}
			f_*(f^! \shchain \cap \shcochain)&= f_*\left(\sum_{ \smallsimplextwo\in f^{-1} \smallsimplex} \smallsimplextwo \under (f|_{\st \smallsimplextwo}^{-1}\bigsimplex)^* \cap \smallsimplexone \under \bigsimplexone\right)& \textnormal{ (Lemma~\ref{covariancelemma})}\\
			&= f_*\left(\sum_{f\smallsimplextwo=\smallsimplex}\delta_{\smallsimplextwo_{\geq k-l}, \smallsimplexone} \langle(f|_{\st \smallsimplextwo}^{-1}\bigsimplex)^*, \bigsimplexone \rangle \smallsimplextwo_{\leq k-l} \right)&\textnormal{ (Equation~\eqref{cap1ongens})}\\
			& = \bigsimplex^*(f\bigsimplexone)\sum_{f\smallsimplextwo=\smallsimplex}\delta_{\smallsimplextwo_{\geq k-l}, \smallsimplexone}f(\smallsimplextwo_{\leq k-l}).& \textnormal{ (as }\langle(f|_{\st \smallsimplextwo}^{-1}\bigsimplex)^*, \bigsimplexone \rangle = \bigsimplex^*(f\bigsimplexone)).
	\end{align*}
	Note that the indices $\ind{f}{\smallsimplextwo}$ play no role as $f$ is orientation preserving. As $f$ is a star-local homeomorphism and the preimages of $\smallsimplex$ are disjoint, among those $\smallsimplextwo$ that $f$ maps to $\smallsimplex$ there can be at most one that has $\smallsimplexone$ as its back face. Furthermore, as $f$ is orientation preserving we have that $f(\smallsimplextwo_{\geq k-l})=f(\smallsimplextwo)_{\geq k-l}=\smallsimplex_{\geq k-l}$. This means
	$$
	f_*(f^! \shchain \cap \shcochain)= \bigsimplex^*(f\bigsimplexone)\delta_{\smallsimplex_{\geq k-l}, f\smallsimplexone}\smallsimplex_{\leq k-l}.
	$$
	On the other side we have (using Lemma~\ref{covariancelemma})
	$$
	\shchain \cap f_!\shcochain= (\smallsimplex\under \bigsimplex^*)\cap (f \smallsimplexone\under f \bigsimplexone)= \bigsimplex^*(f\bigsimplexone) \delta_{\smallsimplex_{\geq k- l},f \smallsimplexone}\smallsimplex_{\leq k-l},
	$$
	establishing the result. The second equality follows from a similar computation.
\end{proof}

\subsubsection{Functoriality of the fundamental class}\label{functfundclass}
The fundamental class is invariant under star-local homeomorphisms in the following sense:
\begin{lemma}\label{fundclassnatlemma}
	Let $f\colon X \rar Y$ be a star-local homeomorphism between oriented locally finite locally CM complexes of dimension n and denote by $h_X^n$ and $h^n_Y$ the local cohomology cosheaves of $X$ and $Y$, respectively. Then the map
	$$
	f^!\colon H^\lf_k(Y;h^n_Y) \rar H^\lf_k(X;h^n_X)
	$$
	takes the fundamental class of $Y$ to the fundamental class of $X$.
\end{lemma}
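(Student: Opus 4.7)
The plan is to prove the lemma by direct chain-level computation, showing that $f^!$ applied to the chain representative $\fund{Y} = \sum_{\bigsimplex \in Y_n} \bigsimplex \under \bigsimplex^*$ of the fundamental class of $Y$ (from \textsection\ref{fundclass}) equals the chain representative $\fund{X} = \sum_{\smallsimplexone \in X_n} \smallsimplexone \under \smallsimplexone^*$ for $X$. Because these are genuine cycles, the statement about classes in homology will follow immediately without needing a separate chain-homotopy argument.

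First I would unwind the definition of $f^!$ from Definition~\ref{covariancelemma} applied termwise to $\fund{Y}$:
\begin{equation*}
f^!(\fund{Y}) = \sum_{\bigsimplex \in Y_n} \sum_{\smallsimplexone \in f^{-1}\bigsimplex} \ind{f}{\smallsimplexone}\, \ind{f}{f|_{\st\smallsimplexone}^{-1}\bigsimplex}\, \smallsimplexone \under (f|_{\st\smallsimplexone}^{-1}\bigsimplex)^*.
\end{equation*}
The crucial observation is that because $f$ is a star-local homeomorphism, whenever $\smallsimplexone \in f^{-1}\bigsimplex$ with $\bigsimplex$ an $n$-simplex, the restriction $f|_{\st\smallsimplexone}\co \st\smallsimplexone \to \st\bigsimplex$ is a simplicial isomorphism; in particular $\smallsimplexone$ itself must be $n$-dimensional (otherwise the star of $\smallsimplexone$ would not even embed into $\st\bigsimplex$ preserving dimensions), so $f|_{\st\smallsimplexone}^{-1}\bigsimplex = \smallsimplexone$. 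Both orientation indices in the summand therefore coincide with $\ind{f}{\smallsimplexone}$, whose square is $1$, and the expression collapses to $\sum_{\bigsimplex \in Y_n}\sum_{\smallsimplexone \in f^{-1}\bigsimplex} \smallsimplexone \under \smallsimplexone^*$.

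To finish, I would verify that this double sum is precisely $\fund{X}$. Since $X$ is locally CM of dimension $n$, Lemma~\ref{pureremark} ensures $X$ is pure of dimension $n$, so its $n$-simplices are exactly those appearing. Every $\smallsimplexone \in X_n$ satisfies $f\smallsimplexone \in Y_n$ (again because $f|_{\st \smallsimplexone}$ is a simplicial isomorphism and so preserves dimension), and conversely the fibres $\{f^{-1}\bigsimplex\}_{\bigsimplex\in Y_n}$ are pairwise disjoint, so the double sum is a reindexing of $\sum_{\smallsimplexone \in X_n} \smallsimplexone \under \smallsimplexone^* = \fund{X}$.

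The argument is essentially bookkeeping once one identifies the correct key fact, so I do not anticipate a major obstacle. The only subtle point, which should be stated carefully, is the dimension-preservation property of star-local homeomorphisms that forces $f|_{\st\smallsimplexone}^{-1}\bigsimplex = \smallsimplexone$; this is what makes the two orientation indices cancel, and it is the sole reason the naturality statement does \emph{not} require $f$ to be orientation preserving.
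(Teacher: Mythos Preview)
Your proof is correct and follows essentially the same approach as the paper: a direct chain-level computation applying $f^!$ to $\fund{Y}$, using $f|_{\st\smallsimplexone}^{-1}\bigsimplex=\smallsimplexone$, the cancellation $\ind{f}{\smallsimplexone}^2=1$, and the observation that the fibres $\{f^{-1}\bigsimplex\}_{\bigsimplex\in Y_n}$ partition $X_n$. Your write-up is slightly more detailed in justifying the dimension-preservation and the partition step, but the argument is the same.
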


\begin{proof}
	Computing the image of the diagonal representative of the fundamental class of $Y$ under $f^!$ gives:
	$$
	f^{!}(\fund{Y})=f^!\left(\sum_{\bigsimplex\in Y_n}\bigsimplex\under \bigsimplex^*\right)=\sum_{\bigsimplex\in Y_n}\sum_{\bigsimplexone\in f^{-1}\bigsimplex}\ind{f}{\bigsimplexone}\ind{f}{f|_{\st \bigsimplexone}^{-1}\bigsimplex}\bigsimplexone\under (f|_{\st \bigsimplexone}^{-1} \bigsimplex)^*.
	$$
	We note that $f|_{\st \bigsimplexone}^{-1}\bigsimplex=\bigsimplexone$.
	The sets $\{f^{-1}\alpha\}_{\alpha \in Y_n}$ partition $X_n$, so we have
	$$
	f^!(\fund{Y})= 	\sum_{\bigsimplexone \in X_n} \bigsimplexone \under\bigsimplexone^* =\fund{X},
	$$
	using that $\ind{f}{\bigsimplexone}^2=1$.
\end{proof}

\subsubsection{Naturality of the duality isomorphisms}\label{natdualizosect}
The isomorphisms in the Duality Theorem~\ref{dualitytheorem} satisfy a particular kind of naturality for star-local homeomorphisms, covariant for the first four (under~\ref{covariantgroup}) and contravariant for the second four (under~\ref{contravariantgroup}).

\begin{theorem}\label{naturalityofduality}
	Let $f\colon (X,L) \rar (Y,K)$ be a star-local homeomorphism between pairs of locally Cohen--Macaulay complexes and full subcomplexes. Write $h^X_*$ (and $h_X^*$), and $h_*^Y$ (and $h^*_Y$) for the local (co)homology of $X$ and $Y$, respectively. Then for the duality isomorphism from Theorem~\ref{dualitytheorem}.\ref{poincare1aii} the diagram
	$$
	\begin{tikzcd}
		H_c^{l}(L;h^X_*)\arrow[r,"{[\fund{X}]\cap-}","\cong"']  \arrow[d,"f_!"]	& H_{n-l}(X,L\vc;R)\arrow[d,"f_*"]\\
		H^{l}(K;h^Y_*) \arrow[r,"{[\fund{Y}]\cap-}","\cong"'] 				& H^\lf_{n-l}(Y,K\vc;R)
	\end{tikzcd}
	$$
	where $f_!$ is the map introduced in Corollary~\ref{covariancelemma}, commutes. A similar diagram commutes for the isomorphism in Theorem~\ref{dualitytheorem}.\ref{poincare1bii}. If $f$ additionally has finite fibres, the corresponding diagrams for the isomorphisms from Theorem~\ref{dualitytheorem}.\ref{poincare1ai} and \ref{dualitytheorem}.\ref{poincare1bi} also commute. For the isomorphisms under Theorem~\ref{dualitytheorem}.\ref{contravariantgroup} the diagram
	$$
	\begin{tikzcd}
		H^{l}(L;R)\arrow[r,"{[\fund{X}]\cap-}",,"\cong"']					& H^\lf_{n-l}(X,L\vc;h_X^*)\\
		H^{l}(K;R) \arrow[r,"{[\fund{Y}]\cap-}",,"\cong"'] \arrow[u,"f^*"]	& H^\lf_{n-l}(Y,K\vc;h_Y^*)\arrow[u,"f^!"]
	\end{tikzcd}
	$$
	and its variants (subject to the condition that $f$ be proper for those involving compactly supported cohomology) for the other items commute.
\end{theorem}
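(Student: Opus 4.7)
The plan is to reduce each commutative square to the combination of two facts already established: the cap-product naturality of Lemma~\ref{naturalitylemma} and the identity $f^![\fund{Y}] = [\fund{X}]$ of Lemma~\ref{fundclassnatlemma}. Every one of the eight diagrams in the statement will then be proved by a single three-term string of equalities.

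For the covariant diagram I would take a class $\shcochain \in H_c^l(L; h^X_*)$, substitute $[\fund{X}] = f^![\fund{Y}]$ using Lemma~\ref{fundclassnatlemma}, and then transport $f$ across the cap product via the first formula of Lemma~\ref{naturalitylemma}:
\begin{equation*}
f_*([\fund{X}] \cap \shcochain) \;=\; f_*(f^![\fund{Y}] \cap \shcochain) \;=\; [\fund{Y}] \cap f_!\shcochain.
\end{equation*}
For the contravariant diagram, the analogous computation with $\intcochain \in H^l(K; R)$ uses the second formula of Lemma~\ref{naturalitylemma}:
\begin{equation*}
[\fund{X}] \cap f^*\intcochain \;=\; f^![\fund{Y}] \cap f^*\intcochain \;=\; f^!([\fund{Y}] \cap \intcochain).
\end{equation*}
By the closing clause of Lemma~\ref{naturalitylemma}, both identities hold on (co)homology without any orientation-preserving hypothesis on $f$, so no auxiliary orientation index needs to be tracked.

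The remaining work is bookkeeping around the relative and support structures. Since $f$ is a star-local map of pairs, the chain-level maps $f_!$, $f^!$, $f_*$ and $f^*$ of Corollary~\ref{covariancelemma} and Lemma~\ref{orientationindexfixesallourproblems} should each restrict appropriately: $f(L) \subseteq K$ ensures $f_!$ takes cochains on $L$ to cochains on $K$ (and dually for $f^*$), while $f(L\vc) \subseteq K\vc$ (part of the map-of-pairs hypothesis) ensures $f_*$ and $f^!$ descend to the quotient and subcomplex structures defining the relative (co)chain complexes in \S\ref{relativecohsect}. The variants landing in compactly supported cohomology or non-locally-finite homology require $f$ to have finite fibres, since only then does $f_!$ preserve compact support and $f^!$ preserve finite chains (cf.\ Corollary~\ref{covariancelemma}).

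The step I expect to demand the most care is verifying that the relative cap products of Proposition~\ref{usefulrelativecups} transform correctly under each of the maps $f_!, f^!, f_*, f^*$; in particular, one must check that the support restrictions used to land the cap in the appropriate relative complex (for instance, landing in $C_*(X, L\vc)$ when the cochain lives on $L$) are preserved when one pushes forward or pulls back along $f$. Once this compatibility is set up, the two displayed equations above prove all eight diagrams simultaneously, with the finite-fibre hypothesis dictating precisely which of the support-sensitive versions remain available.
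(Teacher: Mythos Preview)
Your proposal is correct and follows essentially the same approach as the paper: substitute $[\fund{X}] = f^![\fund{Y}]$ via Lemma~\ref{fundclassnatlemma} and then apply the two naturality formulas of Lemma~\ref{naturalitylemma}. The paper's own proof is in fact briefer than yours, omitting the bookkeeping discussion you flag about relative and support structures.
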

\begin{proof}
	We know from Lemma~\ref{fundclassnatlemma} that $f^!$ takes the fundamental class of $Y$ to the one of $X$. So for the first diagram we are trying to show that for any cohomology class represented by a cocycle $\shcochain$ we have
	$$
	f_*([f^!\fund{Y}]\cap [\shcochain])=[\fund{Y}\cap f_!\shcochain],
	$$
	but this this the first part of Lemma~\ref{naturalitylemma}. Similarly, for the second diagram we are checking that
	$$
	f^!\fund{Y}\cap f^* \intcochain = f^!(\fund{Y}\cap \intcochain),
	$$
	this is the second part of Lemma~\ref{naturalitylemma}.
\end{proof}

\begin{remark}
	Let $M$ be an orientable $n$-manifold with an orientation reversing $\Z/2\Z$-action along $f\colon M \rar M$. Working with $R=\Z$, Lemma~\ref{fundclassnatlemma} tells us that the $\Z/2\Z$-action given by $f^!$ on $H_n^\lf(M;h^*)$ is trivial. However, $\Z/2\Z$ will act by $-1$ on $H^\lf_n(M;\Z)$, meaning that while $H_n^\lf(M;h^*)\cong H^\lf_n(M;\Z)$ as $\Z$-modules, they are not $\Z/2\Z$-equivariantly isomorphic. This shows that the CM fundamental class defined here is a different object from the usual fundamental class for orientable manifolds. In particular, the usual Poincar\'e duality for orientable manifolds is not natural with respect to orientation reversing homeomorphisms. 
\end{remark}

\appendix

\section{Orientation Independence of the Cap Product}\label{orindepapp}
We relegated a part of the proof that the cap product does not depend on the orientation of the simplicial complex to here. That the cap product is independent of the orientation is already non-trivial for the cap products with $R$-coefficients
\begin{align}\begin{split}\label{Rcoefcap}
	\cap \colon C^\lf_k(X;R)\otimes C^l(X;R) &\rar C^\lf_{k-l}(X)	\\
	\smallsimplex\otimes \smallsimplexone^* &\mapsto \smallsimplexone^*(\smallsimplex_{\geq k-l}) \smallsimplex_{\leq k-l},
\end{split}
\end{align}
and 
\begin{align}\begin{split}\label{Rcoefcapv2}
		\cap \colon C_k(X;R)\otimes C_c^l(X;R) &\rar C_{k-l}(X)	\\
		\smallsimplex\otimes \smallsimplexone^* &\mapsto \smallsimplexone^*(\smallsimplex_{\geq k-l}) \smallsimplex_{\leq k-l}.
	\end{split}
\end{align}
We will prove Proposition~\ref{orindepprop} alongside this, the difference between the proofs is cosmetic. In fact, for ease of notation we will only explicitly prove that the $R$-coefficient cap product is orientation independent and indicate what needs changing to establish Proposition~\ref{orindepprop}.

\subsection{Proof of Proposition \ref{orindepprop}}
In the proof below, we will use that one way to think about orientation of finite complexes is through a \emph{total ordering} of the vertices: this induces consistent orientations for all simplices of the complex by restriction. We first prove the following version of Proposition \ref{orindepprop} for the $R$-coefficient cap product from Equations~\eqref{Rcoefcap}~and~\eqref{Rcoefcapv2}:
\begin{proposition}\label{Rcoefordindep}
	Let $X$ be an oriented simplicial complex and $[u,w]$ be an oriented 1-simplex. Write $\tilde{C}_\bullet(X)$ for the chain complex associated to the orientation where $u$ and $w$ are swapped and $\tilde{\cap}$ for the associated cap product. Then the diagram
	\begin{center}
		\begin{tikzcd}
			C^\lf_k(X)\otimes C^l(X) \arrow[r,"\cap"] \arrow[d,"\cong"] & C^\lf_{k-l}(X)\arrow[d, "\cong"]\\
			\tilde{C}^\lf_k(X)\otimes\tilde{C}^l(X) \arrow[r,"\tilde{\cap}"]& \tilde{C}^\lf_{k-l}(X)
		\end{tikzcd}
	\end{center}
	(where the vertical maps are the isomorphisms from Lemma~\ref{homorderinv}) commutes up to chain homotopy. The analogous statement for the cap product involving compactly supported cohomology also holds.
\end{proposition}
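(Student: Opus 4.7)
The plan is to reduce to the case of an adjacent transposition and then construct an explicit chain homotopy that compensates for the chain-level discrepancies. Any swap of two vertices $u, w$ in the total ordering decomposes as a sequence of transpositions of vertices adjacent in the total ordering (advance $u$ rightward past every vertex between $u$ and $w$, swap with $w$, then retreat $w$ leftward). Since chain homotopies compose, it suffices to assume $u$ and $w$ are adjacent in the total ordering. Under this assumption, the sign $s_\rho$ of Lemma~\ref{homorderinv} equals $-1$ exactly when $\rho$ contains both $u$ and $w$ (in which case they sit at adjacent positions of $\rho$), and $+1$ otherwise. For a generator $\sigma \otimes \tau^*$ with $\sigma \in X_k$ and $\tau \in X_l$, a case analysis on whether $\sigma$ contains both $u, w$ and on the position of the pivot index $k-l$ relative to the positions $p, p+1$ of $u, w$ in $\sigma$ shows that the two compositions agree on the nose except in two situations. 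In Case~C ($k-l = p$), the two compositions produce basis elements for the distinct front faces $\{\sigma_0, \ldots, \sigma_{p-1}, u\}$ and $\{\sigma_0, \ldots, \sigma_{p-1}, w\}$; in Case~D ($k-l = p+1$), exactly one of the two compositions is nonzero on $\sigma \otimes \tau^*$ depending on whether the vertex set of $\tau$ contains $u$ or $w$.

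Define $H \colon C^\lf_k(X) \otimes C^l(X) \to \tilde{C}^\lf_{k-l+1}(X)$ on generators by
\[
	H(\sigma \otimes \tau^*) = \begin{cases} (-1)^p \, \tau^*(\sigma_{\geq p}) \, \sigma_{\leq p+1} & \text{if $\sigma$ contains $u$ at position $p$ and $w$ at $p+1$,} \\ 0 & \text{otherwise,} \end{cases}
\]
with the output interpreted in $\tilde{C}$. One then verifies $\partial H + H\, \partial_{\textnormal{tot}}$ equals the chain-level discrepancy case-by-case. In Case~C, $H(\sigma \otimes \tau^*) \neq 0$, and $\partial H$ computed with the $\tilde{C}$-differential (so $\sigma_{\leq p+1}$ is ordered as $[\sigma_0, \ldots, \sigma_{p-1}, w, u]$) yields the required discrepancy plus ``extra'' boundary terms indexed by $i < p$; these extra terms cancel exactly against $H(\partial \sigma \otimes \tau^*)$, in which the only contributing faces are $\sigma_{\langle j \rangle}$ with $j < p$ (where $u$ shifts to position $p-1$, so the sign $(-1)^p$ becomes $(-1)^{p-1}$, providing the cancellation). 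In Case~D, $H(\sigma \otimes \tau^*) = 0$ directly, but $H(\sigma \otimes \delta \tau^*)$ picks up a contribution from the unique coface of $\tau$ obtained by adjoining the ``missing'' vertex ($u$ in D(i), $w$ in D(ii)), and the $(-1)^{k-l}$ sign from $\partial_{\textnormal{tot}}$ delivers the required $\mp \sigma_{\leq p+1}$.

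The same formula for $H$ defines a map $C_k(X) \otimes C^l_c(X) \to \tilde{C}_{k-l+1}(X)$ for the compactly supported variant; since nonvanishing of $H(\sigma \otimes \tau^*)$ forces $\tau$ to be determined by $\sigma$, compact support of $\tau^*$ translates to finiteness of the set of contributing $\sigma$'s, and the identical verification applies. The main obstacle is the sign bookkeeping: the differentials on $C$ and $\tilde{C}$ apply different orderings to the same geometric simplex $\sigma_{\leq p+1}$, and the cancellation requires carefully matching the coefficient $(-1)^p$ in the definition of $H$ against the signs $(-1)^{k-l}$ appearing in $\partial_{\textnormal{tot}}$, the signs $(-1)^p, (-1)^{p+1}$ appearing at positions $p, p+1$ in $\partial \sigma_{\leq p+1}$ in its $\tilde{C}$-ordering, and the shifted signs that arise on the boundary faces of $\sigma$.
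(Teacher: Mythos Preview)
Your approach is the paper's approach: both reduce to an adjacent transposition and use the same chain homotopy (your $H$ coincides with the paper's $B$, since $\tau^*(\sigma_{\geq p})\neq 0$ forces $p=k-l$). One small point on the reduction: the complex is only assumed oriented, not totally ordered, so as in the paper you should first pass to the finite subcomplex spanned by simplices meeting $\{u,w\}$ and choose a total order there before decomposing into adjacent transpositions.

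Your Case~D sketch has a gap. With $u=\sigma_{k-l-1}$ and $w=\sigma_{k-l}$, the term $H(\partial\sigma\otimes\tau^*)$ is \emph{not} zero: the faces $\sigma_{\langle j\rangle}$ with $j>k-l$ keep $u,w$ at positions $k-l-1,\,k-l$ and contribute $(-1)^{j+k-l-1}\tau^*\bigl((\sigma_{\geq k-l-1})_{\langle j-(k-l-1)\rangle}\bigr)\,\sigma_{\leq k-l}$. Likewise $(-1)^{k-l}H(\sigma\otimes\delta\tau^*)$ is not supported on a single coface; it equals $-\sum_{i=0}^{l+1}(-1)^i\tau^*\bigl((\sigma_{\geq k-l-1})_{\langle i\rangle}\bigr)\,\sigma_{\leq k-l}$. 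The terms with $i\geq 2$ cancel against $H(\partial\sigma\otimes\tau^*)$, and the surviving $i=0,1$ terms give exactly $-\tau^*(\sigma_{\geq k-l})\sigma_{\leq k-l}+\tau^*(\sigma_{\succcurlyeq k-l})\sigma_{\leq k-l}$, which is the discrepancy. So your chain homotopy is correct, but the verification in Case~D requires this cross-cancellation rather than the single-coface argument you describe.
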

\begin{proof}
	As we will prove this generator by generator, the proof will not depend on whether or not we work with compactly supported cohomology.
	
	We first show that we can reduce to the case where $u$ and $w$ are consecutive in a total ordering of the vertices of a finite subcomplex. To see this, observe that the subcomplex spanned by those simplices containing $u$ or $v$ is finite. We can pick a total ordering of the vertices of this subcomplex that restricts to the given orientation on each simplex. The permutations swapping $u$ and $w$ in each simplex are then the composite of finitely many transpositions of consecutive vertices. So, without loss of generality, assume that $u$ and $w$ are consecutive in a total ordering of the vertices of all simplices that contain $u$ or $w$.
	
	Now we claim that if $u$ and $w$ are consecutive, then
	\begin{align}\begin{split}\label{Bchainhty}
		B\colon& C_k(X)\otimes C^l(X) \rar \tilde{C}_{k-l+1}(X)	\\
		&\smallsimplex\otimes\smallsimplexone^* \mapsto \begin{cases}
			(-1)^{k-l}\smallsimplexone^*(\smallsimplex_{\geq k-l})\smallsimplex_{\leq k-l+1}& \mbox{if }\smallsimplex_{k-l}=u\mbox{ and }\smallsimplex_{k-l+1}=w\\ 0& \mbox{otherwise.}
		\end{cases}
\end{split}
	\end{align}
	gives the chain homotopy we need. So, we need to show that for every $\smallsimplex \in X_k$ and $\smallsimplexone \in X_l$ we have
	\begin{equation}
		\sg{\smallsimplex_{\leq k-l}}\smallsimplexone^*(\smallsimplex_{\geq k-l})\smallsimplex_{\leq k-l}-\sg{\smallsimplex}\sg{\smallsimplexone} \smallsimplexone^*(\smallsimplex_{\succcurlyeq k-l}) \smallsimplex_{\preccurlyeq k-l}= \tilde{d}B(\smallsimplex\otimes\smallsimplexone^*)+B(d(\smallsimplex\otimes\smallsimplexone^*)), \label{orderingpropgoal}
	\end{equation}
	where $\preccurlyeq$, $\succcurlyeq $ and $\tilde{d}$ respectively denote the front and back face, and differential with respect to the ordering with $u$ and $w$ swapped. For the right hand side, notice that
	\begin{equation}
		\tilde{d}B(\smallsimplex\otimes\smallsimplexone^*)+B(d(\smallsimplex\otimes \smallsimplexone^*))= \tilde{d}B(\smallsimplex\otimes\smallsimplexone^*)+ B(d\smallsimplex\otimes \smallsimplexone^*)+(-1)^{k-l} B(\smallsimplex\otimes \partial\smallsimplexone^*), \label{chainhtyside}
	\end{equation}
	using the definition of the differential on the tensor product complex $C_k(X)\otimes C^l(X)$.
	
	We start by considering the case where $u\neq \smallsimplex_{k-l}, \smallsimplex_{k-l-1}$ or $w\neq \smallsimplex_{k-l}, \smallsimplex_{k-l+1}$. In this case consecutivity of $u$ and $w$ implies that $\smallsimplex_{\geq k-l}=\smallsimplex_{\succcurlyeq  k-l}$ and $\smallsimplex_{\leq k-l}=\smallsimplex_{\preccurlyeq k-l}$, so both $\cap$ and $\tilde{\cap}$ are only non-zero when $\smallsimplexone= \smallsimplex_{\geq k-l}$. Also by consecutivity of $u$ and $w$ the permutation induced on the vertices of $\smallsimplex$ either acts non-trivially on $\smallsimplex_{\geq k-l}$ or on $\smallsimplex_{\leq k-l}$, and using this it is easy to see that the left hand side of Equation~\eqref{orderingpropgoal} vanishes. For the right hand side (Equation~\eqref{chainhtyside}), we have assumed  $u\neq \smallsimplex_{k-l}$ or $w\neq\smallsimplex_{k-l+1}$ so $B$ vanishes on $\smallsimplex\otimes\smallsimplexone^*$. For the second term our assumptions on $u$ and $w$ imply that none of the faces $\smallsimplex_{\langle i \rangle}$ appearing in the differential of $\smallsimplex$ can have $(\smallsimplex_{\langle i \rangle})_{k-l}=u$ and $(\smallsimplex_{\langle i \rangle})_{k-l+1}=w$ so this term also vanishes. The third term evaluates as
	\begin{equation}
		(-1)^{k-l}B(\smallsimplex\otimes \partial\smallsimplexone^*)= \begin{cases}\begin{matrix}
				(\smallsimplexone^*(\smallsimplex_{\succcurlyeq k-l})-\smallsimplexone^*(\smallsimplex_{\geq k-l}))\smallsimplex_{\leq k-l}	\\+	\sum_{i=2}^{l+1}(-1)^{i+1}\smallsimplexone^*((\smallsimplex_{\geq k-l-1})_{\langle i \rangle}) \smallsimplex_{\leq k-l} 
			\end{matrix}
			& \mbox{if }\begin{matrix}
				\smallsimplex_{k-l-1}=u\\ \mbox{ and }\smallsimplex_{k-l}=w
			\end{matrix}\\ 0& \mbox{otherwise.}
		\end{cases}\label{thirdterm}
	\end{equation}
	which is zero because $u\neq \smallsimplex_{k-l-1}$ or $w\neq \smallsimplex_{k-l}$. 
	
	The remaining two cases are where $u=\smallsimplex_{k-l-1}$ and $w=\smallsimplex_{k-l}$, or where $u=\smallsimplex_{k-l}$ and $w=\smallsimplex_{k-l+1}$. Let us start by working out the right hand side of Equation~\eqref{chainhtyside}. The first term is
	\begin{equation}
		\tilde{d} B(\smallsimplex\otimes\smallsimplexone^*)=\begin{cases}
			\begin{matrix}
				\sum_{i=0}^{k-l-1}(-1)^{i+k-l}	\smallsimplexone^*(\smallsimplex_{\geq k-l})(\smallsimplex_{\geq k-l+1})_{\langle i \rangle}\\ + \smallsimplexone^*(\smallsimplex_{\geq k-l})( \smallsimplex_{\leq k-l}-\smallsimplex_{\preccurlyeq k-l} )
			\end{matrix}& \mbox{if }\smallsimplex_{k-l}=u\mbox{ and }\smallsimplex_{k-l+1}=w\\ 0& \mbox{otherwise.}
		\end{cases}\label{dB}
	\end{equation}
	We recognise the last two terms in this as $\smallsimplex\cap\smallsimplexone^*-\smallsimplex\tilde{\cap}\smallsimplexone^*$. For the second term on the right hand side of Equation~\eqref{chainhtyside} we find that
	$$
	(-1)^{k-l}B(d\smallsimplex\otimes\smallsimplexone^*)=\begin{cases}
		\sum_{i=0}^{k-l-1}(-1)^{i+k-l-1}\smallsimplexone^*(\smallsimplex_{\geq k-l})(\smallsimplex_{\leq k-l+1})_{\langle i \rangle}	&\mbox{if }\begin{matrix}
			\smallsimplex_{k-l}=u\\ \mbox{ and }\smallsimplex_{k-l+1}=w
		\end{matrix}\\
		\sum_{i=k-l+1}^{k}(-1)^{i+k-l-1}\smallsimplexone^*((\smallsimplex_{\langle i \rangle})_{\geq k-l-1})\smallsimplex_{\leq k-l}&	\mbox{if }\begin{matrix}
			\smallsimplex_{k-l-1}=u\\ \mbox{ and }\smallsimplex_{k-l}=w
		\end{matrix}\\
		\smallsimplexone^*((\smallsimplex_{\langle k-l \rangle})_{\geq k-l-1})(\smallsimplex_{\langle k-l \rangle})_{\leq k-l}	&\mbox{if }\begin{matrix}
			\smallsimplex_{k-l-1}=u\\ \mbox{ and }\smallsimplex_{k-l+1}=w.
		\end{matrix}
	\end{cases}
	$$
	The last of these cases cannot occur as $u$ and $w$ are assumed to be consecutive. In the first case we have $(\smallsimplex_{\langle i \rangle})_{\geq k}=\smallsimplex_{\geq k+1}$, and we see that this matches the terms in Equation~\eqref{dB} for $i\geq 2$ with opposite sign. Adding these together now gives the desired result.
\end{proof}

The above is readily adapted to for the cap products from Equations~\eqref{cupversion1}~and~\eqref{cupversion2}:

\begin{proof}[Proof of Proposition~\ref{orindepprop}]
	We only specify what chain homotopies to use in place of Equation~\eqref{Bchainhty}, the rest of the arguments are then a straightforward adaptation of the proof of Proposition~\ref{Rcoefordindep}. For the cap products from Equation~\eqref{cupversion2} mapping
	\begin{align*}
		\cap \colon&	C^\lf_k(X;h^*)\otimes C_c^l(X) \rar C_{k-l}(X;h^*)\mbox{ and}\\
		\cap \colon& 	C^\lf_k(X;h^*)\otimes C^l(X) \rar C^\lf_{k-l}(X;h^*),
	\end{align*}
	we use the chain homotopy
	\begin{align*}
		B\colon& C^\lf_k(X;h^*)\otimes C^l(X) \rar \tilde{C}^\lf_{k-l+1}(X;h^*)	\\
		&(\smallsimplex\under \bigsimplex^*)\otimes\smallsimplexone^* \mapsto \begin{cases}
			(-1)^{k-l}\smallsimplexone^*(\smallsimplex_{\geq k-l})\smallsimplex_{\leq k-l+1}\under \bigsimplex^*& \mbox{if }\smallsimplex_{k-l}=u\mbox{ and }\smallsimplex_{k-l+1}=w\\ 0& \mbox{otherwise,}
		\end{cases}
	\end{align*}
	and the corresponding version for compactly supported cohomology. For the other cap products
	\begin{align*}
		\cap \colon&	C^\lf_k(X;h^*)\otimes C_c^l(X;h_*) \rar C_{k-l}(X)\mbox{ and}\\
		\cap \colon& 	C^\lf_k(X;h^*)\otimes C^l(X;h_*) \rar C^\lf_{k-l}(X),
	\end{align*}
	from Equation~\eqref{cupversion1} we use
	\begin{align*}
		B\colon& C^\lf_k(X;h^*)\otimes C^l(X) \rar \tilde{C}^\lf_{k-l+1}(X;h^*)	\\
		&(\smallsimplex\under \bigsimplex*)\otimes(\smallsimplexone\under \bigsimplexone^*) \mapsto \begin{cases}
			(-1)^{k-l}\langle \bigsimplex^*,\bigsimplexone\rangle\smallsimplex_{\leq k-l+1}& \mbox{if }\smallsimplex_{k-l}=u\mbox{, }\smallsimplex_{k-l+1}=w\\& \mbox{and }\smallsimplexone=\smallsimplex_{\geq k-l} \\ 0& \mbox{otherwise,}
		\end{cases}
	\end{align*}
	and the analogous version for compactly supported cochains.
\end{proof}

\end{document}